\numberwithin{equation}{section}
\definecolor{cqcqcq}{rgb}{0.7529411764705882,0.7529411764705882,0.7529411764705882}
\definecolor{qqzzff}{rgb}{0.,0.6,1.}
\definecolor{wwccqq}{rgb}{0.4,0.8,0.}
\definecolor{ffqqqq}{rgb}{1.,0.,0.}
\DeclareFontFamily{U}{mathb}{\hyphenchar\font45}
\DeclareFontShape{U}{mathb}{m}{n}{
<-6> mathb5 <6-7> mathb6 <7-8> mathb7
<8-9> mathb8 <9-10> mathb9
<10-12> mathb10 <12-> mathb12
}{}
\DeclareSymbolFont{mathb}{U}{mathb}{m}{n}
\DeclareMathSymbol{\llcurly}{\mathrel}{mathb}{"CE}
\DeclareMathSymbol{\ggcurly}{\mathrel}{mathb}{"CF}
\title[\tiny{Equivariant isomorphism of Quantum Lens Spaces of low dimension}]{Equivariant isomorphism of Quantum Lens Spaces of low dimension}
\author{Søren Eilers}
\address{Department of Mathematical Sciences, University of Copenhagen, Universitetsparken 5, 2100 Copenhagen, Denmark} 
\email{eilers@math.ku.dk} 
\author[Sophie Emma Zegers]{Sophie Emma Zegers}
\address{Delft institute of applied mathematics, Delft University of Technology, P.O. Box 5031, 2600 GA Delft, The Netherlands}
\email{s.e.zegers@tudelft.nl, sophieemmazegers@gmail.com}
\date\today
\subjclass[2020]{46L35, 58B34} 
\keywords{Graph $C^*$-algebras, classification,  quantum lens spaces.}
\begin{document}

\begin{abstract}
The quantum lens spaces form a natural and well-studied class of noncommutative spaces which can be subjected to classification using algebraic invariants by drawing on the fully developed classification theory of unital graph $C^*$-algebras.  We introduce the problem of deciding when two quantum lens spaces are equivariantly isomorphic, and solve it in certain basic cases. As opposed to classification up to isomorphism, we can not appeal to a complete general classification theory in the equivariant case, but by combining existing partial results with an ad hoc analysis we can solve the case with dimension 3 completely, and the case with dimension 5 in the prime case.

Our results can be formulated directly in terms of the parameters defining the  quantum lens spaces, and here occasionally take on a rather complicated form which convinces us that there is a deep underlying explanation for our findings. We complement the fully established partial results with computer experiments that may indicate the way forward.
\end{abstract}

\theoremstyle{plain}
\newtheorem{theorem}{Theorem}[section]
\newtheorem{corollary}[theorem]{Corollary}
\newtheorem{lemma}[theorem]{Lemma}
\newtheorem{proposition}[theorem]{Proposition}
\newtheorem{conjecture}[theorem]{Conjecture}
\newtheorem{comment}[theorem]{Comment}
\newtheorem{problem}[theorem]{Problem}
\newtheorem{remarks}[theorem]{Remarks}
\newtheorem{notation}[theorem]{Notation}

\theoremstyle{definition}
\newtheorem{example}[theorem]{Example}

\newtheorem{definition}[theorem]{Definition}

\newtheorem{remark}[theorem]{Remark}

\newcommand{\Nb}{{\mathbb{N}}}
\newcommand{\Rb}{{\mathbb{R}}}
\newcommand{\Tb}{{\mathbb{T}}}
\newcommand{\Zb}{{\mathbb{Z}}}
\newcommand{\Cb}{{\mathbb{C}}}

\newcommand{\Af}{\mathfrak A}
\newcommand{\Bf}{\mathfrak B}
\newcommand{\Ef}{\mathfrak E}
\newcommand{\Gf}{\mathfrak G}
\newcommand{\Hf}{\mathfrak H}
\newcommand{\Kf}{\mathfrak K}
\newcommand{\Lf}{\mathfrak L}
\newcommand{\Mf}{\mathfrak M}
\newcommand{\Rf}{\mathfrak R}

\newcommand{\mm}{\underline{m}}
\newcommand{\KKK}{\mathbb{K}}
\newcommand{\PP}{\mathcal P}
\newcommand{\x}{\mathfrak x}
\def\C{\mathbb C}
\def\N{\mathbb N}
\def\R{\mathbb R}
\def\T{\mathbb T}
\def\Z{\mathbb Z}

\def\A{{\mathcal A}}
\def\B{{\mathcal B}}
\def\D{{\mathcal D}}
\def\E{{\mathcal E}}
\def\F{{\mathcal F}}
\def\G{{\mathcal G}}
\def\H{{\mathcal H}}
\def\J{{\mathcal J}}
\def\K{{\mathcal K}}
\def\LL{{\mathcal L}}
\def\N{{\mathcal N}}
\def\M{{\mathcal M}}
\def\N{{\mathcal N}}
\def\OO{{\mathcal O}}
\def\P{{\mathcal P}}
\def\Q{{\mathcal Q}}
\def\SS{{\mathcal S}}
\def\U{{\mathcal U}}
\def\W{{\mathcal W}}

\def\ext{\operatorname{Ext}}
\def\span{\operatorname{span}}
\def\clsp{\overline{\operatorname{span}}}
\def\Ad{\operatorname{Ad}}
\def\ad{\operatorname{Ad}}
\def\tr{\operatorname{tr}}
\def\id{\operatorname{id}}
\def\en{\operatorname{End}}
\def\aut{\operatorname{Aut}}
\def\out{\operatorname{Out}}
\def\coker{\operatorname{coker}}
\def\pol{\mathcal{O}}
\renewcommand{\phi}{\varphi}

\newcommand{\SL}{{\text{SL}}}
\newcommand{\DT}{\mathcal{DT}}
\newcommand{\DQ}{\mathcal{DQ}}
\newcommand{\DQp}{\mathcal{DQ}^1}
\newcommand{\sh}{\mathsf{S}}
\newcommand{\perm}{\mathsf{P}}
\newcommand{\oo}{\mathsf{0}}
\newcommand{\iverson}[1]{\left\llbracket#1\right\rrbracket}
\newcommand{\nn}{\underline{n}}
\newcommand{\ZZ}{\mathbb{Z}}
\newcommand{\NN}{\mathbb{N}}
\newcommand{\AAA}{\mathfrak A}
\newcommand{\matrM}{\mathsf M}
\newcommand{\mylat}{\mathcal L}

\def\la{\langle}
\def\ra{\rangle}
\def\rh{\rightharpoonup}
\def\cl{\textcolor{blue}{$\clubsuit$}}

\def\bst{\textcolor{blue}{$\bigstar$}}

\newcommand{\norm}[1]{\left\lVert#1\right\rVert}
\newcommand{\inpro}[2]{\left\langle#1,#2\right\rangle}
\newcommand{\Mod}[1]{\ (\mathrm{mod}\ #1)}
\renewcommand{\thefootnote}{\alph{footnote}}
\renewcommand{\bibname}{References}
\newcommand{\myA}{\mathsf{B}}

\newcommand{\VS}[1][k]{\mbox{$C(S_q^{2{#1}+1})$}}
\newcommand{\ourE}{\mbox{$E_{r;\underline{m}}$}}
\newcommand{\ourEalt}{\mbox{$E_{r;\underline{n}}$}}
\newcommand{\ourF}{\mbox{$F_{r;\underline{m}}$}}
\newcommand{\ourFalt}{\mbox{$F_{r;\underline{n}}$}}

\newcommand{\QLS}[1][2k+1]{\mbox{$C(L_q^{#1}(r;\underline{m}))$}}
\newcommand{\QLSalt}[1][2k+1]{\mbox{$C(L_q^{#1}(r;\underline{n}))$}}
\newcommand{\QLSr}[1][r]{\mbox{$C(L_q^{2k+1}(#1;\underline{m}))$}}
\newcommand{\QLSralt}[1][r]{\mbox{$C(L_q^{2k+1}(#1;\underline{n}))$}}
\newcommand{\Zr}{{\ZZ/r}}
\newcommand{\Zru}{{(\ZZ/r)^\times}}
\newcommand{\myto}{\ggcurly}
\newcommand{\myo}{\succ}
\newcommand{\divisors}{\tau}
\newcommand{\newSS}[2][r]{{\overline{\mathcal W}(#1;#2)}}
\newcommand{\newSStilde}[2][r]{{\mathcal W(#1;#2)}}
\newcommand{\lt}{\operatorname{rt}}

\maketitle

\addtocounter{section}{-1}

\section{introduction}
A well-studied $q$-deformed object in noncommutative geometry is the quantum odd sphere by Vaksman and Soibelman in \cite{llvyv:afqgsodqs} denoted \VS\ with $q\in (0,1)$. It is the universal $C^*$-algebra generated by $k+1$ elements denoted $z_i$ for $i=1,\dots,k+1$ subject to a specific set of relations (see \cite[Section 4]{llvyv:afqgsodqs}). If $q=1$ the $C^*$-algebra $C(S_1^{2k+1})$ is the algebra of continuous functions on the odd sphere. Hence, we can think about the $C^*$-algebra \VS\ as continuous functions on the virtual quantum space $S_q^{2k+1}$. We denote by $d=2k+1$ the dimension of \VS.

In analogue with the classical setting we can for each positive integer $r$ and each sequence of positive integers $\underline{m}=(m_1,m_2,\dots,m_{k+1})$ define, by universality, an action of the finite cyclic group $\Zr$ on \VS\ given by 
$$
\rho_{\underline{m}}^r: z_i\mapsto \theta^{m_i}z_i,
$$
where $\theta=e^{\frac{2\pi i}{r}}$ i.e. a generator of $\Zr$ considered as a subset of $\T$. The \emph{quantum lens space} \QLS\ is defined as the fixed point algebra of \VS\ under this action. Quantum lens spaces have served as interesting examples in noncommutative geometry e.g. they can under certain conditions be described  as the total space of noncommutative line bundles in which their K-theory can be  investigated see e.g. \cite{tbsaf:qt, fasbgl:gsqls, fdgl:qwpls}.

The Vaksman and Soibelman sphere was in \cite{jhhws:qspsga} shown to be a graph $C^*$-algebra by Hong and Szymański. Also quantum lens spaces admit a graph $C^*$-algebraic description; in the case where $\gcd(m_i,r)=1$ for all $i$ this was proven in \cite{jhhws:qlsga}, and later the result was generalised to arbitrary weights in \cite{tbws:cqlwps,tgsez:cdqlsga}. The  graph $C^*$-algebraic description of quantum lens spaces leads  by  \cite{segrerapws:ccuggs}  to the fact that the question of whether any pair of quantum lens spaces are mutually $*$-isomorphic can be reduced to invariants of a $K$-theoretic nature and is in fact decidable by a terminating algorithm. However, translating the relevant condition in $K$-theory to concrete criteria on the parameters defining the quantum lens spaces is difficult in general. 

Such a concrete description was obtained in   \cite{segrerapws:ccuggs} for all quantum lens spaces of dimension less than or equal to $7$ for which $\gcd(m_i,r)=1$, and it was extended in \cite{tgsez:cdqlsga} to the case where we allow $\gcd(m_i,r)\neq 1$ for precisely one $i$.  In higher dimensions, again with   $\gcd(m_i,r)=1$ for all $i$, the authors of \cite{pljfrkpmrr:ccqls} find for all $r$ the smallest dimension at which two non-isomorphic quantum lens spaces exist, and provide partial evidence for a formula for the  general number of different isomorphism classes.

In the present paper we consider quantum lens spaces of  low dimension ($d=3$ or $d=5$) which from the graph $C^*$-algebraic description are easily seen all to be isomorphic. To obtain more information about the structure of the quantum lens spaces, we therefore investigate under which conditions on the weights, it is possible to have an isomorphism that preserves the natural circle action on the quantum lens space inherited from the Vaksman and Soibelman quantum sphere. For this, our starting point is the description of the quantum sphere as a graph $C^*$-algebra under which the circle action becomes the gauge action on the graph $C^*$-algebra. It is important to remark that we cannot pass to the graph $C^*$-algebraic description of the quantum lens spaces, since the circle action is not preserved under this isomorphism. The main results, Theorems \ref{mainresult3} and \ref{mainresult5}, give a complete description of when there exists an isomorphism preserving the circle action between two quantum lens spaces of dimension $3$, and resolves the same question at dimension $5$ when the main parameter is a prime. 

Moreover, in all cases when such an isomorphism is found to exist we show how to construct it explicitly. We 
relate our work to a conjecture by Hazrat, and explain how we can use computer algebra systems to make further predictions and conjectures. Indeed, also the results that we by now have a complete understanding of were first found by computations in Maple.

The paper is structured as follows. We first recall the description of quantum lens spaces as graph $C^*$-algebras. In Section 2 we describe the fixed point algebra of the quantum lens spaces under the gauge action as being Morita equivalent to another graph $C^*$-algebra. We also describe the lattice of gauge ideals inside the fixed point algebra. Section 3 is devoted to the lowest dimensional quantum lens spaces, dimension 3, in which we come up with a number theoretic invariant for equivariant isomorphism. In the lowest dimension it turns out that equivariant isomorphism is completely determined by the ideal structure of the fixed point algebra. This is not the case for higher dimensions. Hence, in Section 4 we describe the \emph{dimension triple} which we will use in order to come up with an invariant for dimension 5 in the case where the order of the acting group is a prime number, see Section 5 and 6.
 In Section 7, we discuss our expectations for other cases, and present some conjectures based on computer experiments.

\subsection*{Acknowledgements}
The first named author gratefully acknowledges support from Independent Research Fund Denmark (DFF), Research Project 2 ``Operator Algebras, Groups, and Quantum Spaces'' as well as from  EU Staff Exchange project 101086394 ``Operator Algebras That
One Can See''.
The second author was supported by the Carlsberg Foundation through an Internationalisation Fellowship.

We thank Morten S. Risager and Efren Ruiz for helpful conversations.

\subsection{Preliminaries and notation}
We first recall some concepts of graph $C^*$-algebras which are needed in this paper. A directed graph $E=(E^0,E^1,r,s)$ consists of a countable set $E^0$ of \textit{vertices}, a countable set $E^1$ of \textit{edges} and two maps $r,s: E^1\to E^0$ called the \textit{range map} and the \textit{source map} respectively. For an edge $e\in E^1$ from $v$ to $w$ we have $s(e)=v$ and $r(e)=w$. 
A \textit{path} $\alpha$ in a graph is a finite sequence $\alpha=e_1e_2\cdots e_n$ of edges satisfying $r(e_i)=s(e_{i+1})$ for $i=1,...,n-1$. 

The graph $C^*$-algebra of a directed graph is defined as follows (following \cite{njfmlir:cig}, see e.g.\ \cite{segrerapws:ccuggs} and the references given there).
\begin{definition}\label{graphalgebra}
Let $E=(E^0,E^1,r,s)$ be a directed graph. The graph $C^*$-algebra $C^*(E)$
is the universal $C^*$-algebra generated by families of orthogonal projections $\{P_v : \ v\in E^0\}$ and partial isometries $\{S_e : \ e\in E^1\}$ with mutually orthogonal ranges (i.e. $S_e^*S_f=0, e\neq f$) 
subject to the relations 
\begin{itemize}
\item[] (CK1) $S_e^*S_e=P_{r(e)}$
\item[] (CK2) $S_eS_e^*\leq P_{s(e)}$
\item[] (CK3) $P_v=\underset{s(e)=v}{\sum}S_e S_e^*$, if $\{e\in E^1: \ s(e)=v\}$ is finite and nonempty. 
\end{itemize}
\end{definition}
For a finite path $\alpha=e_1e_2\cdots e_n$ we let $S_\alpha=S_{e_1}S_{e_2}\cdots S_{e_n}$ which is a partial isometry. A subset $H\subseteq E^0$ is \textit{hereditary} if it satisfies the condition: If $v\in H$ and $u\in E^0$ is such that $v\geq u$ (i.e. there exists a path from $v$ to $u$) then $u\in H$. It is called \textit{saturated} if it satisfies the condition: If $w\in E^0$ with $0<|s^{-1}(w)|<\infty$ and for each $e\in E^1$, for which $s(e)=w$ we have $r(e)\in H$, then $w\in H$. 

We can by universality define a circle action, called the \textit{gauge action}, $\gamma: \mathbb{T}\to \text{Aut}(C^*(E))$ for which 
$\gamma_z(p_v)=p_v \ \text{and} \ \gamma_z(s_e)=zs_e$
for all $v\in E^0, e\in E^1$ and $z\in \mathbb{T}$.
\\

Moreover, we remark the following:
\begin{itemize}
\item We will  use \textit{mod} in two different ways throughout the text. When writing ``$a\equiv b \pmod{r}$'' we refer to the statement  $r\mid (a-b)$ and when writing ``$a \mod{r}$'' we are referring to the modulo operation which calculates the remainder of $a$ divided by $r$. 
\item By $\iverson{-}$ we refer to the Iverson bracket given on a statement $A$ by
$$
\iverson{A}=\begin{cases}
1 & \text{if $A$ is true}, \\
0 & \text{otherwise}.
\end{cases}
$$
\end{itemize}
\section{Quantum lens spaces}
The odd quantum sphere \VS\ is in \cite{jhhws:qspsga} shown to be isomorphic to the graph $C^*$-algebra $C^*(L_{2k+1})$ by an explicit isomorphism.  The graph $L_{2k+1}$ has $k+1$ vertices $v_i, i=0,\dots,n$ and edges $e_{ij}, 1\leq i\leq j\leq k+1$ such that  $s(e_{ij})=v_i, r(e_{ij})=v_j$. 


\begin{figure}[H]
\centering
\begin{tikzpicture}[scale=1.8]
\draw (0,0) -- (2,0);
\filldraw [black] (0,0) circle (1pt);
\filldraw [black] (2,0) circle (1pt);
\draw [->] (0,0) -- (1,0);

\draw (0,0.3) circle [radius=0.3cm];
\draw (2,0.3) circle [radius=0.3cm];
\draw [->] (-0.01,0.6) -- (0.01,0.6);
\draw [->] (1.99,0.6) -- (2.01,0.6);

\node at (0, 0.2)  {$v_1$};
\node at (2, 0.2)  {$v_2$};
\node at (1, 0.2)  {$e_{12}$};
\node at (0, 0.75)  {$e_{11}$};
\node at (2, 0.75)  {$e_{22}$};
\end{tikzpicture}
\caption{The graph $L_{3}$}
\captionsetup{aboveskip=0pt,font=it}
\label{quantum3sphere}
\end{figure}
\begin{figure}[H]
\centering
\begin{tikzpicture}[scale=2]
\draw (0,0) -- (4,0);
\filldraw [black] (0,0) circle (1pt);
\filldraw [black] (2,0) circle (1pt);
\filldraw [black] (4,0) circle (1pt);
\draw [->] (0,0) -- (1,0);
\draw [->] (2,0) -- (3,0);

\draw[] (0,0) to [out=-20,in=-160] (4,0);
\draw [->] (1.99,-0.4) -- (2,-0.4);

\draw (0,0.3) circle [radius=0.3cm];
\draw (2,0.3) circle [radius=0.3cm];
\draw (4,0.3) circle [radius=0.3cm];
\draw [->] (-0.01,0.6) -- (0.01,0.6);
\draw [->] (1.99,0.6) -- (2.01,0.6);
\draw [->] (3.99,0.6) -- (4.01,0.6);

\node at (0, 0.2)  {$v_1$};
\node at (2, 0.2)  {$v_2$};
\node at (4, 0.2)  {$v_3$};
\node at (1, 0.2)  {$e_{12}$};
\node at (3, 0.2)  {$e_{23}$};
\node at (2, -0.25)  {$e_{13}$};
\node at (0, 0.75)  {$e_{11}$};
\node at (2, 0.75)  {$e_{22}$};
\node at (4, 0.75)  {$e_{33}$};
\end{tikzpicture}
\caption{The graph $L_{5}$}
\captionsetup{aboveskip=0pt,font=it}
\label{quantum5sphere}
\end{figure}
The canonical action $\rho_{\underline{m}}^r$ on \VS\ translates under the isomorphism with $C^*(L_{2k+1})$, to the  action
$$
S_{e_{ij}}\mapsto \theta^{m_i}S_{e_{ij}}, \ \ P_{v_i}\mapsto P_{v_i}, 
$$
which we also denote by $\rho_{\underline{m}}^r$.

The Vaksman and Soibelman sphere admits by universality a natural circle action given on the generators $z_i, i=1,2,\dots,k+1$ by $z_i\mapsto \omega z_i, \omega\in\T, i=1,\dots,k+1$ which under the isomorphism with $C^*(L_{2k+1})$ becomes the gauge action on the graph $C^*$-algebra i.e. 
$$
S_{e_{ij}}\mapsto \omega S_{e_{ij}}, P_{v_i}\mapsto P_{v_i}.
$$
Moreover, we have a natural circle action on $C^*(L_3)^{\rho^r_{\underline{m}}}$ which is inherited from the one on the Vaksman and Soibelman sphere, and denote it by $\gamma$. 

By a result of  Crisp (see \cite[Theorem 4.6]{tc:cga})  the fixed point algebra $C^*(L_3)^{\rho^r_{\underline{m}}}$ can be described as a corner of the graph $C^*$-algebra of the \emph{skew product graph}. We recall here the construction. 
\begin{definition}
Let $c_{\mm}:e_{ij}\to m_i \pmod{r}$ be the labelling induced by the action $\rho^{r}_{\underline{m}}$. The skew product graph $L_{2k+1}\times_{c_{\mm}} \Zb$ has $(k+1)r$ vertices denoted $(v_i,\ell),i=1,2,\dots,k+1, \ell=0,1,\dots,r-1$ and edges $(e_{ij},\ell), 1\leq i,j\leq k+1, \ell=0,1,\dots,r-1$ with source and range as follows: 
$$
s((e_{ij}, \ell))=(v_i,\ell-m_i \Mod{r}), \ \ r((e_{ij},\ell))=(v_j,\ell). 
$$
We write  $\ourE$ for this graph.
\end{definition}

We visualise the graph $\ourE$ as having $k+1$ levels, in the first level we line up the vertices $(v_1,\ell), \ell=0,1,\dots,r-1$ in the second one $(v_2,\ell), \ell=0,1,\dots,r-1$ and so on. In each level we obtain one cycle if $\gcd(m_i,r)=1$ for $i=1,2,\dots,k+1$. There are only edges from the lower levels to the higher ones and not in the other direction. The skew product graph is visualised in Figure \ref{skewproduct} for $r=5$ and $\underline{m}=(1,3)$.  


\begin{figure}[H]
\begin{tikzpicture}[scale=1.2]
\begin{scriptsize}
\filldraw [black] (0,0) circle (1pt);
\filldraw [black] (2,0) circle (1pt);
\filldraw [black] (4,0) circle (1pt);
\filldraw [black] (6,0) circle (1pt);
\filldraw [black] (8,0) circle (1pt);

\node at (-0.5, 0)  {$(v_1,0)$};
\node at (1.3, 0)  {$(v_1,1)$};
\node at (3.3, 0)  {$(v_1,2)$};
\node at (5.3, 0)  {$(v_1,3)$};
\node at (7.3, 0)  {$(v_1,4)$};

\draw[->] (0.1,0.1) to [out=20,in=160] (1.9,0.1);
\draw[->] (2.1,0.1) to [out=20,in=160] (3.9,0.1);
\draw[->] (4.1,0.1) to [out=20,in=160] (5.9,0.1);
\draw[->] (6.1,0.1) to [out=20,in=160] (7.9,0.1);
\draw[->] (8,0.2) to [out=165,in=15] (0,0.2);

\filldraw [black] (0,-2) circle (1pt);
\filldraw [black] (2,-2) circle (1pt);
\filldraw [black] (4,-2) circle (1pt);
\filldraw [black] (6,-2) circle (1pt);
\filldraw [black] (8,-2) circle (1pt);

\node at (-0.5, -2)  {$(v_2,0)$};
\node at (1.3, -2)  {$(v_2,1)$};
\node at (3.3, -2)  {$(v_2,2)$};
\node at (5.3, -2)  {$(v_2,3)$};
\node at (7.3, -2)  {$(v_2,4)$};

\draw[->] (0.1,-1.9) to [out=10,in=170] (5.9,-1.9);
\draw[->] (2.1,-1.9) to [out=10,in=170] (7.9,-1.9);
\draw[->] (4,-2.1) to [out=-160,in=-20] (0.1,-2.1);
\draw[->] (5.9,-2.1) to [out=-160,in=-20] (2.1,-2.1);
\draw[->] (7.9,-2.1) to [out=-160,in=-20] (4.1,-2.1);

\draw[->] (0.1,-0.1) to (1.99,-1.9);
\draw[] [->] (2.1,-0.1) to (3.99,-1.9);
\draw [->] (4,-0.1) to (5.99,-1.9);
\draw[] [->] (6,-0.1) to (7.99,-1.9);
\draw[] [->] (8,-0.1) to (0,-1.9);
\end{scriptsize}
\end{tikzpicture}
\captionsetup{aboveskip=0pt,font=it}
\caption{The graph $E_{5;(1,3)}$}
\label{skewproduct}
\end{figure}

The description of the quantum lens space \QLS\ as the corner of the graph $C^*$-algebra of the \emph{skew product graph} is as follows:  
\begin{equation}\label{thecorner}
\QLS\cong C^*(L_{2k+1})^{\rho_{\underline{m}}^r}\cong \sum_{i=1}^{k+1}P_{(v_i,0)}C^*(\ourE)\sum_{i=1}^{k+1}P_{(v_i,0)}
\end{equation}
Note that the above isomorphism preserves the circle action as follows from inspection of  \cite{jhhws:qlsga}. 

\begin{notation}\label{not:equiviso}
We write $(\QLS,\gamma)\simeq (\QLSalt,\gamma)$ (sometimes abbreviated $\underline{m}\simeq_{\gamma} \underline{n}$ when the context is clear)
if there exists an isomorphism between $C(L^{2k+1}_q(r;\underline{m}))$ and $C(L_q^{2k+1}(r;\underline{n})$ that preserves the circle action described above.
\end{notation}

We formulate our first result:

\begin{lemma}\label{Lemma:simpleiso}
For all choices of $r$, $k$ and $\mm$ with $m_i\in\Zru$, when $\alpha \in \Zru$ is given, we get
\[
(\QLS,\gamma)\simeq(C(L_q^{2k+1}(r;\alpha\mm)),\gamma)
\]
\end{lemma}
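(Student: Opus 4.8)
The plan is to show that the two fixed point algebras are in fact the \emph{same} subalgebra of $C^*(L_{2k+1})$, so that the identity map already serves as the equivariant isomorphism. The starting observation is that the finite cyclic action only enters through its image in $\aut(C^*(L_{2k+1}))$: writing $\rho^r_{\underline{m}}$ for the homomorphism $\Zr\to\aut(C^*(L_{2k+1}))$ with $\rho^r_{\underline{m}}(n)\colon S_{e_{ij}}\mapsto \theta^{n m_i}S_{e_{ij}}$, $P_{v_i}\mapsto P_{v_i}$, an element is fixed by the action precisely when it is fixed by every automorphism in the subset $\rho^r_{\underline{m}}(\Zr)\subseteq\aut(C^*(L_{2k+1}))$.

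First I would record the key identity $\rho^r_{\alpha\underline{m}}(n)=\rho^r_{\underline{m}}(\alpha n)$, which is immediate since $\theta^{n(\alpha m_i)}=\theta^{(\alpha n)m_i}$ and the weights only matter modulo $r$. Because $\alpha\in\Zru$, multiplication by $\alpha$ is a bijection of $\Zr$, so as $n$ runs over $\Zr$ so does $\alpha n$; hence $\rho^r_{\alpha\underline{m}}(\Zr)=\rho^r_{\underline{m}}(\Zr)$ as subsets of $\aut(C^*(L_{2k+1}))$. I would then conclude that the two fixed point algebras coincide on the nose,
\[
C^*(L_{2k+1})^{\rho^r_{\underline{m}}}=C^*(L_{2k+1})^{\rho^r_{\alpha\underline{m}}},
\]
noting that $\alpha m_i\in\Zru$, so the right-hand side is a legitimate quantum lens space and the statement is well posed.

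Next I would address the circle action. By definition $\gamma$ on each fixed point algebra is the restriction of the gauge action of $C^*(L_{2k+1})$; this restriction is well defined because the gauge action commutes with each $\rho^r_{\underline{m}}$, both acting diagonally on the canonical generators. Since the gauge action is one fixed family of automorphisms of $C^*(L_{2k+1})$, not depending on $\underline{m}$, and since the two fixed point algebras are literally the same subalgebra, the two copies of $\gamma$ are the identical map. Thus the identity map on $C^*(L_{2k+1})^{\rho^r_{\underline{m}}}=C^*(L_{2k+1})^{\rho^r_{\alpha\underline{m}}}$ intertwines $\gamma$ with $\gamma$, which is the asserted equivariant isomorphism.

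The content here is almost entirely bookkeeping, so the main thing is to be careful on two points: that ``fixed by the action'' means ``fixed by every group element,'' so that only the \emph{image} of the representation matters, and that the circle action is genuinely inherited from $C^*(L_{2k+1})$ rather than read off the skew-product picture. I would deliberately avoid routing the argument through the skew product graph $\ourE$ and the corner description \eqref{thecorner}: while one can write down an explicit graph isomorphism $\ourE\cong E_{r;\alpha\underline{m}}$ via $(v_i,\ell)\mapsto(v_i,\alpha\ell)$ and $(e_{ij},\ell)\mapsto(e_{ij},\alpha\ell)$, which fixes the corner since $\alpha\cdot 0=0$, tracking the inherited circle action $\gamma$ through that identification is less transparent than simply working inside the sphere algebra.
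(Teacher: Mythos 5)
Your proposal is correct and takes essentially the same route as the paper: there the authors note that $\theta^{\alpha}$ is again a generator of $\Zr$, so the action $\beta^r_{\mm}$ (which is exactly $\rho^r_{\alpha\mm}$) has the same fixed points as $\rho^r_{\mm}$, whence the two fixed point algebras coincide on the nose and the inherited circle action is trivially preserved. Your formulation via equality of images $\rho^r_{\alpha\mm}(\Zr)=\rho^r_{\mm}(\Zr)$ in $\aut(C^*(L_{2k+1}))$ is just a slightly more explicit packaging of that same observation.
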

\begin{proof}
Recall that \QLS\ is defined as the fixed point algebra of \VS\ under the action 
$$
\rho_{\underline{m}}^r: z_i\mapsto \theta^{m_i}z_i
$$
of $\Zr$. Let $\alpha\in  \Zru$, then $\theta^{\alpha}$ is also a generator of $\Zr$ and we can define a  $\Zr$-action on \VS\ as follows: 
$$
\beta_{\underline{m}}^r: z_i\mapsto (\theta^{\alpha})^{m_i} z_i.
$$  
It follows that 
$$
\rho_{\underline{m}}^r(a)=a \Longleftrightarrow \beta_{\underline{m}}^r(a)=a
$$
Hence $C(S_q^{2n+1})^{\rho_{\underline{m}}^r}=C(S_q^{2n+1})^{\beta_{\underline{m}}^r}$ and therefore the circle action $\gamma$ is clearly preserved between the two fixed point algebras. Since the action $\beta_{\underline{m}}^r$ is the same as $\rho_{\alpha\underline{m}}^r$ we obtain that $(\QLS,\gamma)\simeq(C(L_q^{2k+1}(r,\alpha\mm)),\gamma)$. 
\end{proof}

In \cite{jhhws:qlsga} it was shown that quantum lens spaces are graph $C^*$-algebras by showing that the corner in \eqref{thecorner} is itself a graph $C^*$-algebra of a certain graph $L_{2k+1}^{r;\underline{m}}$. This result also proves that all the quantum lens spaces with the same $r$ are indeed mutually isomorphic in dimension $3$ and $5$, see Figure \ref{graphlensspace} and \ref{graphlensspace5}. Note the notation ``$(n)$'' to indicate in the depicted graph that the arrow represents $n$ parallel edges.

\begin{figure}[H]
    \centering
\begin{tikzpicture}[scale=2]
\draw (0,0) -- (2,0);
\filldraw [black] (0,0) circle (1pt);
\filldraw [black] (2,0) circle (1pt);
\draw [->] (0,0) -- (1,0);

\draw (0,0.3) circle [radius=0.3cm];
\draw (2,0.3) circle [radius=0.3cm];
\draw [->] (-0.01,0.6) -- (0.01,0.6);
\draw [->] (1.99,0.6) -- (2.01,0.6);

\draw[] (0,0) to [out=-10,in=-170] (2,0);
\draw[] (0,0) to [out=-20,in=-160] (2,0);
\draw[] (0,0) to [out=-50,in=-130] (2,0);
\draw [->] (0.99,-0.1) -- (1,-0.1);
\draw [->] (0.99,-0.2) -- (1,-0.2);
\draw [->] (0.99,-0.45) -- (1,-0.45);

\node at (0, 0.2)  {$w_1$};
\node at (2, 0.2)  {$w_2$};
\node at (1, 0.2)  {$g_{1}$};
\node at (1, -0.6)  {$g_{r}$};
\node at (0.99, -0.27)  {$\vdots$};
\node at (0, 0.75)  {$f_{1}$};
\node at (2, 0.75)  {$f_{2}$};
\end{tikzpicture}
\caption{The graph $L_{3}^{r;\underline{m}}$}
\captionsetup{aboveskip=0pt,font=it}
\label{graphlensspace}
\end{figure}

\begin{figure}[H]
\centering
\begin{tikzpicture}[scale=2]
\draw (0,0) -- (4,0);
\filldraw [black] (0,0) circle (1pt);
\filldraw [black] (2,0) circle (1pt);
\filldraw [black] (4,0) circle (1pt);
\draw [->] (0,0) -- (1,0);
\draw [->] (2,0) -- (3,0);

\draw[] (0,0) to [out=-20,in=-160] (4,0);
\draw [->] (1.99,-0.4) -- (2,-0.4);

\draw (0,0.3) circle [radius=0.3cm];
\draw (2,0.3) circle [radius=0.3cm];
\draw (4,0.3) circle [radius=0.3cm];
\draw [->] (-0.01,0.6) -- (0.01,0.6);
\draw [->] (1.99,0.6) -- (2.01,0.6);
\draw [->] (3.99,0.6) -- (4.01,0.6);

\node at (0, 0.2)  {$w_1$};
\node at (2, 0.2)  {$w_2$};
\node at (4, 0.2)  {$w_3$};
\node at (1, -0.15)  {$(r)$};
\node at (1, 0.2)  {$g_{12}^i$};
\node at (3, -0.15)  {$(r)$};
\node at (3, 0.2)  {$g_{23}^i$};
\node at (2, -0.7)  {$\left( \frac{r(r+1)}{2}\right)$};
\node at (2, -0.23) {$g_{13}^i$};
\node at (0, 0.75)  {$f_{1}$};
\node at (2, 0.75)  {$f_{2}$};
\node at (4, 0.75)  {$f_{3}$};
\end{tikzpicture}
\caption{The graph $L_{5}^{r;\underline{m}}$}
\captionsetup{aboveskip=0pt,font=it}
\label{graphlensspace5}
\end{figure}

The graph $L_{2k+1}^{r;\underline{m}}$ is constructed from the skew product graph by considering a specific type of paths called \textit{admissible paths}. The number of paths between $w_1$ and $w_2$ is then the number of admissible paths from $(v_1,0)$ to $(v_2,0)$ i.e. paths which does not pass through these two vertices. 
The isomorphism between the corner in \eqref{thecorner} and the graph $C^*$-algebra of $L_{2k+1}^{r;\underline{m}}$ is given explicitly in \cite{jhhws:qlsga} and maps an admissible path from $(v_i,0)$ to $(v_j,0)$ in $E_{r;\mm}$ to one of the edges in $L_{2k+1}^{r;\underline{m}}$ with source $w_i$ and range $w_j$. Hence it is clear that the gauge action is not preserved under this isomorphism. 

Since we are aiming to classify up to equivariant isomorphism, we cannot work with this picture and do not describe the graph $L_{2k+1}^{r;\underline{m}}$ in detail in general. Instead we use the description as the corner of the skew product graph. We emphasize that even through we cannot use the graph $C^*$-algebraic picture in \cite{jhhws:qlsga} their approach is still important for our work since it relies heavily on the notion of admissible paths.

We note that it is in fact possible to prove, cf. \cite{gaermt:megr}, that \QLS\ cannot be equivariantly isomorphic to any graph $C^*$-algebra. This justifies why we will work with the description of quantum lens spaces as the fixed point algebra of $	C^*(L_{2k+1})$ or as the corner \eqref{thecorner}. 
\\

We now introduce sets of specific paths inside $L_{2k+1}$ which we in dimension 3 and 5 will see corresponds to a set of generators of the fixed point algebra $C^*(L_{2k+1})^{\rho_{\underline{m}}^r}$ describing a quantum lens space (see Proposition \ref{PropGenerators3} and \ref{PropGenerators5}). We also introduce two multisets which become crucial in the classification up to equivariant isomorphism. 

For a positive integer $t$ let $e_{ii}^t$ denote the path in the graph $L_{2k+1}$ where $e_{ii}$ has been repeated $t$ times.
\begin{definition}\label{def:multisets} 
Let $\underline{m}=(m_1,m_2,\dots,m_{k+1})$ with $\gcd(m_i,r)=1, i=1,...,k+1$. We define $\mathcal{A}(r;(m_{i_1},m_{i_2},...,m_{i_{\ell}}))$ for $i_1<i_2<...<i_{\ell}$ to be the paths in $L_{2k+1}$ of the form 
$$
\alpha_{t_{i_1},...,t_{i_{\ell}}}:=e_{i_1i_1}^{t_{i_1}}e_{i_1i_2}e_{i_2i_2}^{t_{i_2}}e_{i_2i_3}e_{i_3i_3}^{t_{i_3}}e_{i_3i_4}\cdots e_{i_\ell i_\ell}^{t_{i_{\ell}}}
$$ 
with $t_{i_j}\in \{0,1,...,r-1\}$ for which 
\begin{equation}\label{InFixedPA}
m_{i_1}(t_{i_1}+1)+m_{i_2}(t_{i_2}+1)+\cdots +m_{i_{\ell}}t_{i_{\ell}}\equiv 0 \pmod{r}
\end{equation}
and such that $\alpha_{t_{i_1},...,t_{i_{\ell}}}$ cannot be written as a combination of shorter paths from sets $\mathcal{A}(r,(m_{j_1},...,m_{j_s}))$ with $\{j_1,...,j_s\}\subset \{i_1,...,i_{\ell}\}$. 
\\
Moreover, we define the multisets
$$
\begin{aligned}
&\newSStilde{(m_{i_1},m_{i_2},...,m_{i_\ell})}:=\{|\alpha|:  \alpha\in \mathcal{A}(r;(m_{i_1},m_{i_2},...,m_{i_\ell}) \}
\\
&\newSS{(m_{i_1},m_{i_2},...,m_{i_\ell})}:=\{|\alpha| \mod{r}:  \alpha\in \mathcal{A}(r;(m_{i_1},m_{i_2},...,m_{i_\ell}) \}\subseteq \ZZ/r
\end{aligned}
$$
\end{definition}

Note that the effect of condition \eqref{InFixedPA} is to ensure that the partial isometry corresponding to $\alpha_{t_{i_1},...,t_{i_{\ell}}}$ lies inside the fixed point algebra $C^*(L_{2k+1})^{\rho_{\underline{m}}^r}$. 
\begin{notation}\label{WreducedTheSame}
For $\mm=(m_1,m_2,\dots,m_{k+1})$ and $\nn=(n_1,n_2,\dots,n_{k+1})$  with $\gcd(m_i,r)=\gcd(n_i,r)=1$ for $i=1,...,k+1$ we write $\newSS{\underline{m}}=\newSS{\underline{n}}$ if for each pair $s,t\in \{1,...,k+1\}$ with $s<t$ we have 
$$
\bigcup_{l=0}^{t-2}\left(\bigcup_{s<i_1<...<i_{\ell}<t}  \newSS{(m_{s},m_{i_1},...,m_{i_{\ell}},m_{t})}\right) = \bigcup_{l=0}^{t-2}\left(\bigcup_{s<i_1<...<i_{\ell}<t} \newSS{(n_{s},n_{i_2},...,n_{i_{\ell}}, n_{t})}\right)
$$ 
\end{notation}
Intuitively,  $\newSS{\underline{m}}=\newSS{\underline{n}}$  means that the multiset consisting of the length modulo $r$ of all the paths starting in $v_s$ and ending in $v_t$ is the same for the two sets of weights.

\begin{corollary}\label{Cor:SameMultiSet}
For all choices of $r$, $k$ and $\mm$ with $m_i\in\Zru$, when $\alpha \in \Zru$ is given, we get
$$
\newSS{\underline{m}}=\newSS{\alpha\underline{m}}.
$$
\end{corollary}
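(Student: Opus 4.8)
The plan is to observe that the congruence \eqref{InFixedPA} cutting out the relevant paths is completely insensitive to multiplication of all the weights by a fixed unit $\alpha\in\Zru$, and that this congruence is the only place where the weights enter the combinatorics. Concretely, since $\alpha$ is invertible modulo $r$, for any integer $x$ we have $\alpha x\equiv 0\pmod r$ if and only if $x\equiv 0\pmod r$; applying this with
$$
x=m_{i_1}(t_{i_1}+1)+m_{i_2}(t_{i_2}+1)+\cdots+m_{i_\ell}t_{i_\ell}
$$
shows that a tuple $(t_{i_1},\dots,t_{i_\ell})$ satisfies \eqref{InFixedPA} for the weights $(m_{i_1},\dots,m_{i_\ell})$ exactly when it satisfies \eqref{InFixedPA} for the scaled weights $(\alpha m_{i_1},\dots,\alpha m_{i_\ell})$.

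The crucial point is that the underlying paths $\alpha_{t_{i_1},\dots,t_{i_\ell}}$ live in the fixed graph $L_{2k+1}$, which does not depend on $\mm$ at all; the weights only determine which of these paths qualify for membership in $\mathcal A(r;(m_{i_1},\dots,m_{i_\ell}))$, and that membership is governed precisely by \eqref{InFixedPA}. The minimality clause in Definition \ref{def:multisets} --- that $\alpha_{t_{i_1},\dots,t_{i_\ell}}$ not be expressible through shorter paths drawn from sets $\mathcal A(r;(m_{j_1},\dots,m_{j_s}))$ with $\{j_1,\dots,j_s\}\subset\{i_1,\dots,i_\ell\}$ --- refers only to the corresponding sub-congruences, each of which is again of the form \eqref{InFixedPA} and hence equally invariant under scaling by $\alpha$. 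I therefore expect the identity
$$
\mathcal A(r;(m_{i_1},\dots,m_{i_\ell}))=\mathcal A(r;(\alpha m_{i_1},\dots,\alpha m_{i_\ell}))
$$
to hold as an equality of sets of paths in $L_{2k+1}$, for every index tuple $i_1<\cdots<i_\ell$.

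With this equality in hand the conclusion is immediate: taking lengths and reducing modulo $r$ is a purely path-theoretic operation, so $\newSS{(m_{i_1},\dots,m_{i_\ell})}=\newSS{(\alpha m_{i_1},\dots,\alpha m_{i_\ell})}$ as multisets for every tuple, and consequently the two sides of the defining identity in Notation \ref{WreducedTheSame} --- which are built from these multisets by taking unions over all intermediate index sets between a fixed pair $s<t$ --- agree term by term, giving $\newSS{\mm}=\newSS{\alpha\mm}$. The only point demanding genuine care is the minimality clause: one must verify that ``being decomposable into shorter admissible paths'' is a condition on the solution sets of the sub-congruences rather than on the numerical values of the weights themselves. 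Once this is granted, scaling by the unit $\alpha$ alters none of the relevant solution sets and the whole argument collapses to the invariance of \eqref{InFixedPA}.
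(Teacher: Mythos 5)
Your proof is correct, but it takes a more hands-on route than the paper. The paper disposes of this corollary in one line by citing Lemma \ref{Lemma:simpleiso}: since $\theta^{\alpha}$ generates the same copy of $\Zr$ inside $\T$, the actions $\rho^r_{\mm}$ and $\rho^r_{\alpha\mm}$ have literally the same fixed point algebra, and hence the sets $\mathcal{A}(r;-)$, which single out spanning partial isometries of that algebra, coincide on the nose. You instead re-derive this coincidence combinatorially: multiplication by the unit $\alpha$ preserves the congruence \eqref{InFixedPA}, so the solution tuples $(t_{i_1},\dots,t_{i_\ell})$ are unchanged, and the paths live in the weight-independent graph $L_{2k+1}$. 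These are two packagings of the same underlying fact --- the equality of fixed point algebras in Lemma \ref{Lemma:simpleiso} holds precisely because $\alpha x\equiv 0\pmod r$ iff $x\equiv 0\pmod r$ --- so nothing is gained in generality, but your version is self-contained and makes visible one point the paper's terse proof glosses over: the minimality clause in Definition \ref{def:multisets} is recursive, referring to the sets $\mathcal{A}(r;(m_{j_1},\dots,m_{j_s}))$ for proper sub-tuples, so strictly one should run an induction on the size of the index set (base case included) to conclude that the exclusion condition is identical for $\mm$ and $\alpha\mm$; you flag exactly this as the step demanding care, and the induction is immediate since each sub-congruence is again of the form \eqref{InFixedPA}. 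Granting that, the equality $\mathcal{A}(r;(m_{i_1},\dots,m_{i_\ell}))=\mathcal{A}(r;(\alpha m_{i_1},\dots,\alpha m_{i_\ell}))$ holds as sets of paths, and $\newSS{\underline{m}}=\newSS{\alpha\underline{m}}$ follows termwise in Notation \ref{WreducedTheSame}, exactly as in the paper.
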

\begin{proof}
This follows directly by Lemma \ref{Lemma:simpleiso} since it is shown that the two fixed point algebras agrees, hence the paths inside $\mathcal{A}(r;-)$ all agrees. 
\end{proof}

\section{Structure of the fixed point algebra}
We will in this section give a description of the fixed point algebra of the quantum lens spaces under the canonical circle action. This is a key component of our analysis, and as we will note at the end of the section, establishes a complete classification at arbitrary dimension for $r=3,4,6,12$.

\begin{definition}
Let $E$ be a directed graph with finitely many vertices. We define $E\times_1\Zb$ to be the graph with vertices $E^0\times\Zb$, edges $E^1\times\Zb$ and with range and source maps given as follows: 
$$
s(e,n)=(s(e),n-1) \ \text{and} \ r(e,n)=(r(e),n)
$$
\end{definition}
We are interested in a particular subgraph of $\ourE\times_1\Zb$, denoted $\ourF$, namely the graph for which the vertices equals the hereditary subset of $\ourE^0\times\Zb$ generated by the vertices $((v_i,0),0)$ for $i=1,2,\dots,{k+1}$ and the set of edges is
$$
(\ourF)^1:=\{e\in \ourE\times_1\Zb: \ s(e)\in (\ourF)^0\}. 
$$

\begin{example}\label{exgraph1}
We consider the case where $r=5$ and $\underline{m}=(1,3)$. 
\begin{figure}[H]
\begin{tikzpicture}[scale=1]
\begin{scriptsize}

\node at (-1, 0)  {$((v_1,0),0)$};
\node at (2, 0.5)  {$((v_1,0),1)$};
\node at (4, 0.5)  {$((v_1,0),2)$};
\node at (6, 0.5)  {$((v_1,0),3)$};
\node at (8, 0.5)  {$((v_1,0),4)$};
\node at (10, 0.5)  {$((v_1,0),5)$};

\node at (-1, -1)  {$((v_1,1),0)$};
\node at (-1, -2)  {$((v_1,2),0)$};
\node at (-1, -3)  {$((v_1,3),0)$};
\node at (-1, -4)  {$((v_1,4),0)$};

\filldraw [black] (0,0) circle (1pt);
\filldraw [black] (2,0) circle (1pt);
\filldraw [black] (4,0) circle (1pt);
\filldraw [black] (6,0) circle (1pt);
\filldraw [black] (8,0) circle (1pt);
\filldraw [black] (10,0) circle (1pt);

\filldraw [black] (0,-1) circle (1pt);
\filldraw [black] (2,-1) circle (1pt);
\filldraw [black] (4,-1) circle (1pt);
\filldraw [black] (6,-1) circle (1pt);
\filldraw [black] (8,-1) circle (1pt);
\filldraw [black] (10,-1) circle (1pt);

\filldraw [black] (0,-2) circle (1pt);
\filldraw [black] (2,-2) circle (1pt);
\filldraw [black] (4,-2) circle (1pt);
\filldraw [black] (6,-2) circle (1pt);
\filldraw [black] (8,-2) circle (1pt);
\filldraw [black] (10,-2) circle (1pt);

\filldraw [black] (0,-3) circle (1pt);
\filldraw [black] (2,-3) circle (1pt);
\filldraw [black] (4,-3) circle (1pt);
\filldraw [black] (6,-3) circle (1pt);
\filldraw [black] (8,-3) circle (1pt);
\filldraw [black] (10,-3) circle (1pt);

\filldraw [black] (0,-4) circle (1pt);
\filldraw [black] (2,-4) circle (1pt);
\filldraw [black] (4,-4) circle (1pt);
\filldraw [black] (6,-4) circle (1pt);
\filldraw [black] (8,-4) circle (1pt);
\filldraw [black] (10,-4) circle (1pt);

\draw[cyan] [->] (0,0) -- (2,-1);
\draw[cyan] [->] (2,-1) -- (4,-2);
\draw[cyan] [->] (4,-2) -- (6,-3);
\draw[cyan] [->] (6,-3) -- (8,-4);
\draw[cyan] [->] (8,-4) -- (10,0);

\draw[cyan] [->] (0,-1) -- (2,-2);
\draw[cyan] [->] (2,-2) -- (4,-3);
\draw[cyan] [->] (4,-3) -- (6,-4);
\draw[cyan] [->] (6,-4) -- (8,0);
\draw[cyan] [->] (8,0) -- (10,-1);

\draw[cyan] [->] (0,-2) -- (2,-3);
\draw[cyan] [->] (2,-3) -- (4,-4);
\draw[cyan] [->] (4,-4) -- (6,0);
\draw[cyan] [->] (6,0) -- (8,-1);
\draw[cyan] [->] (8,-1) -- (10,-2);

\draw[cyan] [->] (0,-3) -- (2,-4);
\draw[cyan] [->] (2,-4) -- (4,0);
\draw[cyan] [->] (4,0) -- (6,-1);
\draw[cyan] [->] (6,-1) -- (8,-2);
\draw[cyan] [->] (8,-2) -- (10,-3);

\draw[cyan] [->] (0,-4) -- (2,0);
\draw[cyan] [->] (2,0) -- (4,-1);
\draw[cyan] [->] (4,-1) -- (6,-2);
\draw[cyan] [->] (6,-2) -- (8,-3);
\draw[cyan] [->] (8,-3) -- (10,-4);

\draw[] [dotted] (-1.5,-4.5) -- (11,-4.5);

\draw[orange] [->] (0,0) -- (2,-6);
\draw[orange] [->] (2,-1) -- (4,-7);
\draw[orange] [->] (4,-2) -- (6,-8);
\draw[orange] [->] (6,-3) -- (8,-9);
\draw[orange] [->] (8,-4) -- (10,-5);

\draw[orange] [->] (0,-1) -- (2,-7);
\draw[orange] [->] (2,-2) -- (4,-8);
\draw[orange] [->] (4,-3) -- (6,-9);
\draw[orange] [->] (6,-4) -- (8,-5);
\draw[orange] [->] (8,0) -- (10,-6);

\draw[orange] [->] (0,-2) -- (2,-8);
\draw[orange] [->] (2,-3) -- (4,-9);
\draw[orange] [->] (4,-4) -- (6,-5);
\draw[orange] [->] (6,0) -- (8,-6);
\draw[orange] [->] (8,-1) -- (10,-7);

\draw[orange] [->] (0,-3) -- (2,-9);
\draw[orange] [->] (2,-4) -- (4,-5);
\draw[orange] [->] (4,0) -- (6,-6);
\draw[orange] [->] (6,-1) -- (8,-7);
\draw[orange] [->] (8,-2) -- (10,-8);

\draw[orange] [->] (0,-4) -- (2,-5);
\draw[orange] [->] (2,0) -- (4,-6);
\draw[orange] [->] (4,-1) -- (6,-7);
\draw[orange] [->] (6,-2) -- (8,-8);
\draw[orange] [->] (8,-3) -- (10,-9);

\node at (-1, -5)  {$((v_2,0),0)$};
\node at (-1, -6)  {$((v_2,1),0)$};
\node at (-1, -7)  {$((v_2,2),0)$};
\node at (-1, -8)  {$((v_2,3),0)$};
\node at (-1, -9)  {$((v_2,4),0)$};

\filldraw [black] (0,-5) circle (1pt);
\filldraw [black] (2,-5) circle (1pt);
\filldraw [black] (4,-5) circle (1pt);
\filldraw [black] (6,-5) circle (1pt);
\filldraw [black] (8,-5) circle (1pt);
\filldraw [black] (10,-5) circle (1pt);

\filldraw [black] (0,-6) circle (1pt);
\filldraw [black] (2,-6) circle (1pt);
\filldraw [black] (4,-6) circle (1pt);
\filldraw [black] (6,-6) circle (1pt);
\filldraw [black] (8,-6) circle (1pt);
\filldraw [black] (10,-6) circle (1pt);

\filldraw [black] (0,-7) circle (1pt);
\filldraw [black] (2,-7) circle (1pt);
\filldraw [black] (4,-7) circle (1pt);
\filldraw [black] (6,-7) circle (1pt);
\filldraw [black] (8,-7) circle (1pt);
\filldraw [black] (10,-7) circle (1pt);

\filldraw [black] (0,-8) circle (1pt);
\filldraw [black] (2,-8) circle (1pt);
\filldraw [black] (4,-8) circle (1pt);
\filldraw [black] (6,-8) circle (1pt);
\filldraw [black] (8,-8) circle (1pt);
\filldraw [black] (10,-8) circle (1pt);

\filldraw [black] (0,-9) circle (1pt);
\filldraw [black] (2,-9) circle (1pt);
\filldraw [black] (4,-9) circle (1pt);
\filldraw [black] (6,-9) circle (1pt);
\filldraw [black] (8,-9) circle (1pt);
\filldraw [black] (10,-9) circle (1pt);

\draw[MidnightBlue] [->] (0,-5) -- (2,-8);
\draw[MidnightBlue] [->] (2,-8) -- (4,-6);
\draw[MidnightBlue] [->] (4,-6) -- (6,-9);
\draw[MidnightBlue] [->] (6,-9) -- (8,-7);
\draw[MidnightBlue] [->] (8,-7) -- (10,-5);

\draw[MidnightBlue] [->] (0,-6) -- (2,-9);
\draw[MidnightBlue] [->] (2,-9) -- (4,-7);
\draw[MidnightBlue] [->] (4,-7) -- (6,-5);
\draw[MidnightBlue] [->] (6,-5) -- (8,-8);
\draw[MidnightBlue] [->] (8,-8) -- (10,-6);

\draw[MidnightBlue] [->] (0,-7) -- (2,-5);
\draw[MidnightBlue] [->] (2,-5) -- (4,-8);
\draw[MidnightBlue] [->] (4,-8) -- (6,-6);
\draw[MidnightBlue] [->] (6,-6) -- (8,-9);
\draw[MidnightBlue] [->] (8,-9) -- (10,-7);

\draw[MidnightBlue] [->] (0,-8) -- (2,-6);
\draw[MidnightBlue] [->] (2,-6) -- (4,-9);
\draw[MidnightBlue] [->] (4,-9) -- (6,-7);
\draw[MidnightBlue] [->] (6,-7) -- (8,-5);
\draw[MidnightBlue] [->] (8,-5) -- (10,-8);

\draw[MidnightBlue] [->] (0,-9) -- (2,-7);
\draw[MidnightBlue] [->] (2,-7) -- (4,-5);
\draw[MidnightBlue] [->] (4,-5) -- (6,-8);
\draw[MidnightBlue] [->] (6,-8) -- (8,-6);
\draw[MidnightBlue] [->] (8,-6) -- (10,-9);

\node at (10.5, -0)  {$\cdots$};
\node at (10.5, -1)  {$\cdots$};
\node at (10.5, -2)  {$\cdots$};
\node at (10.5, -3)  {$\cdots$};
\node at (10.5, -4)  {$\cdots$};
\node at (10.5, -5)  {$\cdots$};
\node at (10.5, -6)  {$\cdots$};
\node at (10.5, -7)  {$\cdots$};
\node at (10.5, -8)  {$\cdots$};
\node at (10.5, -9)  {$\cdots$};

\end{scriptsize}
\end{tikzpicture}
\captionsetup{aboveskip=0pt,font=it}
\caption{The graph $\ourE \times_1\Zb$ for $r=5, \underline{m}=(1,3)$.}
\end{figure}

\begin{figure}[H]
\begin{tikzpicture}[scale=0.8]
\begin{scriptsize}
\node at (-1.5,-2.5)  {Level 1};
\node at (-1.5,-6.5)  {Level 2};

\filldraw [black] (0,0) circle (1pt);
\filldraw [black] (2,-1) circle (1pt);
\filldraw [black] (4,-2) circle (1pt);
\filldraw [black] (6,-3) circle (1pt);
\filldraw [black] (8,-4) circle (1pt);
\filldraw [black] (10,0) circle (1pt);


\draw[cyan] [->] (0,0) -- (2,-1);
\draw[cyan] [->] (2,-1) -- (4,-2);
\draw[cyan] [->] (4,-2) -- (6,-3);
\draw[cyan] [->] (6,-3) -- (8,-4);
\draw[cyan] [->] (8,-4) -- (10,0);

\draw[] [dotted] (-1,-4.5) -- (11,-4.5);

\draw[orange] [->] (0,0) -- (2,-6);
\draw[orange] [->] (2,-1) -- (4,-7);
\draw[orange] [->] (4,-2) -- (6,-8);
\draw[orange] [->] (6,-3) -- (8,-9);
\draw[orange] [->] (8,-4) -- (10,-5);

\filldraw [black] (0,-5) circle (1pt);

\filldraw [black] (6,-5) circle (1pt);
\filldraw [black] (8,-5) circle (1pt);
\filldraw [black] (10,-5) circle (1pt);

\filldraw [black] (2,-6) circle (1pt);
\filldraw [black] (4,-6) circle (1pt);
\filldraw [black] (8,-6) circle (1pt);
\filldraw [black] (10,-6) circle (1pt);

\filldraw [black] (4,-7) circle (1pt);
\filldraw [black] (6,-7) circle (1pt);
\filldraw [black] (8,-7) circle (1pt);
\filldraw [black] (10,-7) circle (1pt);

\filldraw [black] (2,-8) circle (1pt);
\filldraw [black] (6,-8) circle (1pt);
\filldraw [black] (8,-8) circle (1pt);
\filldraw [black] (10,-8) circle (1pt);

\filldraw [black] (4,-9) circle (1pt);
\filldraw [black] (6,-9) circle (1pt);
\filldraw [black] (8,-9) circle (1pt);
\filldraw [black] (10,-9) circle (1pt);

\node at (-0.3, -5)  {$v_0^0$};
\node at (1.6, -5.7)  {$v_1^1$};
\node at (2, -8.3)  {$v_3^1$};
\node at (4, -5.7)  {$v_1^2$};
\node at (5.6, -5)  {$v_0^3$};

\draw[MidnightBlue] [->] (0,-5) -- (2,-8);
\draw[MidnightBlue] [->] (2,-8) -- (4,-6);
\draw[MidnightBlue] [->] (4,-6) -- (6,-9);
\draw[MidnightBlue] [->] (6,-9) -- (8,-7);
\draw[MidnightBlue] [->] (8,-7) -- (10,-5);

\draw[MidnightBlue] [->] (4,-7) -- (6,-5);
\draw[MidnightBlue] [->] (6,-5) -- (8,-8);
\draw[MidnightBlue] [->] (8,-8) -- (10,-6);

\draw[MidnightBlue] [->] (8,-9) -- (10,-7);

\draw[MidnightBlue] [->] (2,-6) -- (4,-9);
\draw[MidnightBlue] [->] (4,-9) -- (6,-7);
\draw[MidnightBlue] [->] (6,-7) -- (8,-5);
\draw[MidnightBlue] [->] (8,-5) -- (10,-8);

\draw[MidnightBlue] [->] (6,-8) -- (8,-6);
\draw[MidnightBlue] [->] (8,-6) -- (10,-9);

\node at (10.5, -0)  {$\cdots$};
\node at (10.5, -5)  {$\cdots$};
\node at (10.5, -6)  {$\cdots$};
\node at (10.5, -7)  {$\cdots$};
\node at (10.5, -8)  {$\cdots$};
\node at (10.5, -9)  {$\cdots$};

\end{scriptsize}
\end{tikzpicture}
\captionsetup{aboveskip=0pt,font=it}
\caption{The graph $F_{5;(1,3)}$.}
\label{F13}
\end{figure} 
\end{example}

We refer to the subgraphs given by the orbits of addition by $m_i$, depicted vertically in shades of blue above, as \emph{periodic lines}.

\begin{theorem}\label{isowF}
For all choices of $k$, $r$ and $\mm$ with $m_i\in\Zru$ we have 
\[
\QLS^\gamma\otimes\KKK \cong C^*(\ourF)\otimes\KKK
\]
and
\[
\QLS^\gamma\cong P^0C^*(\ourE \times_1\mathbb{Z})P^0
\]
where $P^0:=\sum_{v\in (L_{2k+1})^0}P_{((v,0),0)}$ in $C^*(\ourE \times_1\mathbb{Z})$.
\end{theorem}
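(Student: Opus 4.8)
The plan is to reduce both claims to the skew-product picture \eqref{thecorner} and then apply the same corner-of-skew-product mechanism (Crisp's theorem, \cite{tc:cga}) a second time, now to the gauge action viewed as a $\widehat{\Z}$-action. First I would exploit that the isomorphism \eqref{thecorner} intertwines $\gamma$ with the gauge action of the skew-product graph $\ourE$ and that the projection $Q:=\sum_{i=1}^{k+1}P_{(v_i,0)}$ is gauge-invariant. Taking fixed points under $\gamma$ and using the identity $(QAQ)^\gamma=QA^\gamma Q$ for a gauge-invariant $Q$ (which follows by applying the canonical conditional expectation onto the fixed-point algebra), this gives $\QLS^\gamma\cong Q\,C^*(\ourE)^\gamma\,Q$, where $C^*(\ourE)^\gamma$ is the AF core of $C^*(\ourE)$.

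Next I would identify this core with a corner of $C^*(\ourE\times_1\Z)$. The gauge action of $\T=\widehat{\Z}$ on $C^*(\ourE)$ is precisely the action induced by the constant $\Z$-labelling $c\equiv 1$ of $\ourE^1$, whose skew product is $\ourE\times_1\Z$ by definition; applying Crisp's theorem to it yields a $*$-isomorphism $C^*(\ourE)^\gamma\cong P_{(0)}\,C^*(\ourE\times_1\Z)\,P_{(0)}$ with $P_{(0)}:=\sum_{w\in\ourE^0}P_{(w,0)}$, sending each vertex projection $P_{(v_i,0)}$ to the level-$0$ projection $P_{((v_i,0),0)}$. Consequently $Q$ is carried to $P^0$, and since $P^0\leq P_{(0)}$ we obtain $\QLS^\gamma\cong P^0\,C^*(\ourE\times_1\Z)\,P^0$, which is the second assertion.

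For the first assertion I would compare this corner with $C^*(\ourF)$. By construction $(\ourF)^0$ is the hereditary subset of $(\ourE\times_1\Z)^0$ generated by $\{((v_i,0),0)\}_i$, and $\ourF$ inherits all edges emitted from $(\ourF)^0$; hereditariness makes the Cuntz--Krieger relations of $\ourF$ agree with those it inherits, so $C^*(\ourF)$ embeds as the subalgebra of $C^*(\ourE\times_1\Z)$ generated by the vertices and edges of $\ourF$. Since every path issuing from $\{((v_i,0),0)\}_i$ stays inside $\ourF$, one checks $P^0C^*(\ourE\times_1\Z)P^0=P^0C^*(\ourF)P^0$. Finally $P^0$ is full in $C^*(\ourF)$: every vertex of $(\ourF)^0$ is reachable by a path from $\{((v_i,0),0)\}_i$, so every vertex projection, and hence all of $C^*(\ourF)$, lies in the ideal generated by $P^0$. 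Brown's stable-isomorphism theorem then gives $P^0C^*(\ourF)P^0\otimes\KKK\cong C^*(\ourF)\otimes\KKK$, which combined with the previous step yields $\QLS^\gamma\otimes\KKK\cong C^*(\ourF)\otimes\KKK$.

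The hard part will be the second step: checking that the finite-group corner statement underlying \eqref{thecorner} really does transfer verbatim to the infinite dual group $\widehat{\Z}=\T$, and --- most importantly --- tracking the projections so that $Q$ lands exactly on $P^0$ rather than on the larger $P_{(0)}$. I would make this concrete by writing the core isomorphism on matrix units, sending $S_\mu S_\nu^*$ (with $|\mu|=|\nu|$ and $r(\mu)=r(\nu)$) to $S_{\tilde\mu}S_{\tilde\nu}^*$, where $\tilde\mu,\tilde\nu$ are the lifts of $\mu,\nu$ to $\ourE\times_1\Z$ with source at $\Z$-level $0$, and verifying this is a Cuntz--Krieger family whose image generates the claimed corner, using the gauge-invariant uniqueness theorem for injectivity. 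The remaining bookkeeping in the third step (the corner identity and the fullness of $P^0$) is then routine once hereditariness is invoked.
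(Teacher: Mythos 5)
Your proposal is correct and follows essentially the same route as the paper: pass via \eqref{thecorner} to the gauge-invariant corner of $C^*(\ourE)$, identify $C^*(\ourE)^\gamma$ with the corner $\sum_{w}P_{(w,0)}\,C^*(\ourE\times_1\Zb)\,\sum_{w}P_{(w,0)}$, cut down to $C^*(\ourF)$ using hereditariness of $(\ourF)^0$, and conclude the stabilized claim from fullness of $P^0$ via Brown's theorem \cite[Corollary 2.6]{lgb:sihsc}. The step you flag as the ``hard part'' is not actually an obstacle: the paper simply invokes Crisp's \cite[Corollary 4.9]{tc:cga}, which is stated exactly for the $\Zb$-labelling inducing the gauge action (no separate extension to $\widehat{\Zb}=\Tb$ is needed), so your matrix-unit verification, while sound, replaces a citation rather than fills a gap.
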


\begin{proof}
We prove the last claim first. The fixed point algebra of \QLS\ by the circle action $\gamma$ inherited from \VS\ can by \eqref{thecorner} be described as the following corner of the fixed point algebra of $C^*(\ourE)$ under the gauge action.
$$
\QLS^{\gamma}\cong \sum_{v\in (L_{2k+1})^0} P_{(v,0)}C^*(\ourE)^{\gamma}\sum_{v\in (L_{2k+1})^0} P_{(v,0)}
$$
This follows since the circle action on \VS\ becomes the gauge action on $C^*(L_{2k+1})$ which corresponds to the gauge action on $C^*(\ourE)$ under the isomorphism in \eqref{thecorner} (see \cite[Section 2, equation (15),(16)]{jhhws:qlsga}). 

Due to \cite[Corollary 4.9]{tc:cga} we have 
$$
C^*(\ourE)^\gamma\cong \sum_{w\in (\ourE)^0} P_{(w,0)} C^*(\ourE \times_1\mathbb{Z}) \sum_{w\in E^0} P_{(w,0)}. 
$$
Under the above isomorphism we have $\sum_{v\in L_{2k+1}} P_{(v,0)}\mapsto \sum_{v\in L_{2k+1}} P_{((v,0),0)}$, hence
$$
\begin{aligned}
&C(L_q^{2k+1}(r;\underline{m}))^{\gamma}
\\
&\cong \sum_{v\in (L_{2k+1})^0} P_{((v,0),0)}\left(\sum_{w\in (\ourE)^0} P_{(w,0)} C^*(\ourE\times_1\mathbb{Z}) \sum_{w\in (\ourE)^0} P_{(w,0)}\right)\sum_{v\in (L_{2k+1})^0} P_{((v,0),0)} \\
&\cong \sum_{v\in (L_{2k+1})^0} P_{((v,0),0)}C^*(\ourE\times_1\mathbb{Z}) \sum_{v\in (L_{2k+1})^0} P_{((v,0),0)}
\end{aligned}
$$
Since $\QLS\cong \overline{\span}\{s_{\alpha}s_{\beta}^*: \alpha,\beta\in (\ourE\times_1\mathbb{Z})^*, r(\alpha)=r(\beta)\}$, an element $s_{\alpha}s_{\beta}^*$
can only be in the corner if $s(\alpha),s(\beta)\in \{((v,0),0): v\in (L_{2k+1})^0\}$. This implies that $r(\alpha)=r(\beta)\in (F_{r;\underline{m}})^0$ since $(F_{r;\underline{m}})^0$ by definition is the hereditary subset of $\ourE^0\times\Zb$ generated by the vertices $((v,0),0)$ for $v\in (L_{2k+1})^0$. Then 
\begin{equation}\label{corner1}
\begin{aligned}
\QLS^{\gamma}
&\cong \sum_{v\in (L_{2k+1})^0} P_{((v,0),0)}C^*(\ourE\times_1\mathbb{Z}) \sum_{v\in (L_{2k+1})^0} P_{((v,0),0)} \\
&\cong \sum_{v\in (L_{2k+1})^0} P_{((v,0),0)}C^*(\ourF) \sum_{v\in (L_{2k+1})^0} P_{((v,0),0)} 
\end{aligned}
\end{equation}
By construction of the graph $\ourF$, the projection $\sum_{v\in (L_{2k+1})^0} P_{((v,0),0)}$ is full in $C^*(\ourF)$. It follows that the corner \eqref{corner1} is stably isomorphic (or Morita equivalent) to  $C^*(\ourF)$ by \cite[Corollary 2.6]{lgb:sihsc}. 
\end{proof}

\begin{example}\label{exgraph2}
To compare with Example \ref{exgraph1} consider now $r=5$ and the weight vectors $\underline{m}=(2,3)$ and $\underline{m}=(2,2).$ The graphs $F_{5;(1,3)}$ and $F_{5;(2,3)}$ have in common that eventually they will reach all the vertices at level 2 after a particular point. The graph $F_{5;(2,2)}$ has a different structure since we only obtain one periodic line  in both level 1 and 2. We see immediately that $(C(L_q^{3}(5;(2,2))),\gamma)\not\simeq (C(L_q^{3}(5;(2,3))),\gamma)$.
\\
\begin{minipage}{8cm}
\begin{figure}[H]
\begin{tikzpicture}[scale=0.6]
\begin{scriptsize}

\filldraw [black] (0,0) circle (1pt);
\filldraw [black] (2,-2) circle (1pt);
\filldraw [black] (4,-4) circle (1pt);
\filldraw [black] (6,-1) circle (1pt);
\filldraw [black] (8,-3) circle (1pt);
\filldraw [black] (10,0) circle (1pt);

\draw[cyan] [->] (0,0) -- (2,-2);
\draw[cyan] [->] (2,-2) -- (4,-4);
\draw[cyan] [->] (4,-4) -- (6,-1);
\draw[cyan] [->] (6,-1) -- (8,-3);
\draw[cyan] [->] (8,-3) -- (10,0);

\draw[] [dotted] (-1,-4.5) -- (11,-4.5);

\draw[orange] [->] (0,0) -- (2,-7);
\draw[orange] [->] (2,-2) -- (4,-9);
\draw[orange] [->] (4,-4) -- (6,-6);
\draw[orange] [->] (6,-1) -- (8,-8);
\draw[orange] [->] (8,-3) -- (10,-5);

\filldraw [black] (0,-5) circle (1pt);
\filldraw [black] (4,-5) circle (1pt);
\filldraw [black] (8,-5) circle (1pt);
\filldraw [black] (10,-5) circle (1pt);

\filldraw [black] (4,-6) circle (1pt);
\filldraw [black] (6,-6) circle (1pt);
\filldraw [black] (8,-6) circle (1pt);
\filldraw [black] (10,-6) circle (1pt);

\filldraw [black] (2,-7) circle (1pt);
\filldraw [black] (6,-7) circle (1pt);
\filldraw [black] (8,-7) circle (1pt);
\filldraw [black] (10,-7) circle (1pt);

\filldraw [black] (2,-8) circle (1pt);
\filldraw [black] (6,-8) circle (1pt);
\filldraw [black] (8,-8) circle (1pt);
\filldraw [black] (10,-8) circle (1pt);

\filldraw [black] (4,-9) circle (1pt);
\filldraw [black] (6,-9) circle (1pt);
\filldraw [black] (8,-9) circle (1pt);
\filldraw [black] (10,-9) circle (1pt);

\draw[MidnightBlue] [->] (0,-5) -- (2,-8);
\draw[MidnightBlue] [->] (2,-8) -- (4,-6);
\draw[MidnightBlue] [->] (4,-6) -- (6,-9);
\draw[MidnightBlue] [->] (6,-9) -- (8,-7);
\draw[MidnightBlue] [->] (8,-7) -- (10,-5);

\draw[MidnightBlue] [->] (8,-8) -- (10,-6);

\draw[MidnightBlue] [->] (6,-6) -- (8,-9);
\draw[MidnightBlue] [->] (8,-9) -- (10,-7);

\draw[MidnightBlue] [->] (4,-9) -- (6,-7);
\draw[MidnightBlue] [->] (6,-7) -- (8,-5);
\draw[MidnightBlue] [->] (8,-5) -- (10,-8);

\draw[MidnightBlue] [->] (2,-7) -- (4,-5);
\draw[MidnightBlue] [->] (4,-5) -- (6,-8);
\draw[MidnightBlue] [->] (6,-8) -- (8,-6);
\draw[MidnightBlue] [->] (8,-6) -- (10,-9);

\node at (10.5, -0)  {$\cdots$};
\node at (10.5, -5)  {$\cdots$};
\node at (10.5, -6)  {$\cdots$};
\node at (10.5, -7)  {$\cdots$};
\node at (10.5, -8)  {$\cdots$};
\node at (10.5, -9)  {$\cdots$};

\end{scriptsize}
\end{tikzpicture}
\captionsetup{aboveskip=0pt,font=it}
\caption{The graph $F_{5;(2,3)}$.}
\label{F23}
\end{figure}
\end{minipage}
\begin{minipage}{8cm}
\begin{figure}[H]
\begin{tikzpicture}[scale=0.6]
\begin{scriptsize}

\filldraw [black] (0,0) circle (1pt);
\filldraw [black] (2,-2) circle (1pt);
\filldraw [black] (4,-4) circle (1pt);
\filldraw [black] (6,-1) circle (1pt);
\filldraw [black] (8,-3) circle (1pt);
\filldraw [black] (10,0) circle (1pt);

\draw[cyan] [->] (0,0) -- (2,-2);
\draw[cyan] [->] (2,-2) -- (4,-4);
\draw[cyan] [->] (4,-4) -- (6,-1);
\draw[cyan] [->] (6,-1) -- (8,-3);
\draw[cyan] [->] (8,-3) -- (10,0);

\draw[] [dotted] (-1,-4.5) -- (11,-4.5);

\draw[orange] [->] (0,0) -- (2,-7);
\draw[orange] [->] (2,-2) -- (4,-9);
\draw[orange] [->] (4,-4) -- (6,-6);
\draw[orange] [->] (6,-1) -- (8,-8);
\draw[orange] [->] (8,-3) -- (10,-5);

\filldraw [black] (0,-5) circle (1pt);
\filldraw [black] (2,-7) circle (1pt);
\filldraw [black] (4,-9) circle (1pt);
\filldraw [black] (6,-6) circle (1pt);
\filldraw [black] (8,-8) circle (1pt);
\filldraw [black] (10,-5) circle (1pt);

\draw[MidnightBlue] [->] (0,-5) -- (2,-7);
\draw[MidnightBlue] [->] (2,-7) -- (4,-9);
\draw[MidnightBlue] [->] (4,-9) -- (6,-6);
\draw[MidnightBlue] [->] (6,-6) -- (8,-8);
\draw[MidnightBlue] [->] (8,-8) -- (10,-5);

\node at (10.5, -0)  {$\cdots$};
\node at (10.5, -5)  {$\cdots$};
\end{scriptsize}
\end{tikzpicture}
\captionsetup{aboveskip=0pt,font=it}
\caption{The graph $F_{5;(2,2)}$.}
\label{F22}
\end{figure}
\end{minipage}
\end{example}

\begin{example}\label{eight}
For $r=8$ the graph $F_{r,\underline{m}}$ takes the form as in figure \ref{F8} where we either obtain 1, 2 or 4 periodic lines in level 2. Note that the number of periodic lines in level 2 is precisely $r/\gcd(m_2-m_1,r)$ which we in Proposition \ref{PropSetOfNumbers} will see is true in general . 
\begin{figure}[H]
\hspace{-1.8cm}\begin{minipage}{4cm}
\begin{tikzpicture}[scale=0.6]
\begin{scriptsize}
\node at (4,0) {$F_{8;(1,1)}$};
\filldraw [black] (0,0) circle (1pt);
\filldraw [black] (1,-1) circle (1pt);
\filldraw [black] (2,-2) circle (1pt);
\filldraw [black] (3,-3) circle (1pt);
\filldraw [black] (4,-4) circle (1pt);
\filldraw [black] (5,-5) circle (1pt);
\filldraw [black] (6,-6) circle (1pt);
\filldraw [black] (7,-7) circle (1pt);

\filldraw [black] (8,0) circle (1pt);
\node at (8.5, 0)  {$\cdots$};
\draw[cyan] [->] (7,-7) -- (8,0);

\draw[cyan] [->] (0,0) -- (1,-1);
\draw[cyan] [->] (1,-1) -- (2,-2);
\draw[cyan] [->] (2,-2) -- (3,-3);
\draw[cyan] [->] (3,-3) -- (4,-4);
\draw[cyan] [->] (4,-4) -- (5,-5);
\draw[cyan] [->] (5,-5) -- (6,-6);
\draw[cyan] [->] (6,-6) -- (7,-7);

\draw[] [dotted] (-0.5,-7.5) -- (8.5,-7.5);

\draw[orange] [->] (0,0) -- (1,-9);
\draw[orange] [->] (1,-1) -- (2,-10);
\draw[orange] [->] (2,-2) -- (3,-11);
\draw[orange] [->] (3,-3) -- (4,-12);
\draw[orange] [->] (4,-4) -- (5,-13);
\draw[orange] [->] (5,-5) -- (6,-14);
\draw[orange] [->] (6,-6) -- (7,-15);
\draw[orange] [->] (7,-7) -- (8,-8);

\filldraw [black] (0,-8) circle (1pt);
\filldraw [black] (1,-9) circle (1pt);
\filldraw [black] (2,-10) circle (1pt);
\filldraw [black] (3,-11) circle (1pt);
\filldraw [black] (4,-12) circle (1pt);
\filldraw [black] (5,-13) circle (1pt);
\filldraw [black] (6,-14) circle (1pt);
\filldraw [black] (7,-15) circle (1pt);

\draw[MidnightBlue] [->] (0,-8) -- (1,-9);
\draw[MidnightBlue] [->] (1,-9) -- (2,-10);
\draw[MidnightBlue] [->] (2,-10) -- (3,-11);
\draw[MidnightBlue] [->] (3,-11) -- (4,-12);
\draw[MidnightBlue] [->] (4,-12) -- (5,-13);
\draw[MidnightBlue] [->] (5,-13) -- (6,-14);
\draw[MidnightBlue] [->] (6,-14) -- (7,-15);
\draw[MidnightBlue] [->] (7,-15) -- (8,-8);

\node at (8.5, -8)  {$\cdots$};

\end{scriptsize}
\end{tikzpicture}
\end{minipage}
\hspace{1.5cm}
\begin{minipage}{3.5cm}
\begin{tikzpicture}[scale=0.6]
\begin{scriptsize}
\node at (4,0) {$F_{8;(1,5)}$};
\filldraw [black] (0,0) circle (1pt);
\filldraw [black] (1,-1) circle (1pt);
\filldraw [black] (2,-2) circle (1pt);
\filldraw [black] (3,-3) circle (1pt);
\filldraw [black] (4,-4) circle (1pt);
\filldraw [black] (5,-5) circle (1pt);
\filldraw [black] (6,-6) circle (1pt);
\filldraw [black] (7,-7) circle (1pt);

\filldraw [black] (8,0) circle (1pt);
\node at (8.5, 0)  {$\cdots$};
\draw[cyan] [->] (7,-7) -- (8,0);

\draw[cyan] [->] (0,0) -- (1,-1);
\draw[cyan] [->] (1,-1) -- (2,-2);
\draw[cyan] [->] (2,-2) -- (3,-3);
\draw[cyan] [->] (3,-3) -- (4,-4);
\draw[cyan] [->] (4,-4) -- (5,-5);
\draw[cyan] [->] (5,-5) -- (6,-6);
\draw[cyan] [->] (6,-6) -- (7,-7);

\draw[] [dotted] (-0.5,-7.5) -- (8.5,-7.5);

\draw[orange] [->] (0,0) -- (1,-9);
\draw[orange] [->] (1,-1) -- (2,-10);
\draw[orange] [->] (2,-2) -- (3,-11);
\draw[orange] [->] (3,-3) -- (4,-12);
\draw[orange] [->] (4,-4) -- (5,-13);
\draw[orange] [->] (5,-5) -- (6,-14);
\draw[orange] [->] (6,-6) -- (7,-15);
\draw[orange] [->] (7,-7) -- (8,-8);

\filldraw [black] (0,-8) circle (1pt);
\filldraw [black] (4,-8) circle (1pt);
\filldraw [black] (8,-8) circle (1pt);

\filldraw [black] (1,-9) circle (1pt);
\filldraw [black] (5,-9) circle (1pt);

\filldraw [black] (2,-10) circle (1pt);
\filldraw [black] (6,-10) circle (1pt);

\filldraw [black] (3,-11) circle (1pt);
\filldraw [black] (7,-11) circle (1pt);

\filldraw [black] (4,-12) circle (1pt);
\filldraw [black] (8,-12) circle (1pt);

\filldraw [black] (1,-13) circle (1pt);
\filldraw [black] (5,-13) circle (1pt);

\filldraw [black] (2,-14) circle (1pt);
\filldraw [black] (6,-14) circle (1pt);

\filldraw [black] (3,-15) circle (1pt);
\filldraw [black] (7,-15) circle (1pt);

\draw[MidnightBlue] [->] (0,-8) -- (1,-13);
\draw[MidnightBlue] [->] (1,-13) -- (2,-10);
\draw[MidnightBlue] [->] (2,-10) -- (3,-15);
\draw[MidnightBlue] [->] (3,-15) -- (4,-12);
\draw[MidnightBlue] [->] (4,-12) -- (5,-9);
\draw[MidnightBlue] [->] (5,-9) -- (6,-14);
\draw[MidnightBlue] [->] (6,-14) -- (7,-11);
\draw[MidnightBlue] [->] (7,-11) -- (8,-8);

\draw[MidnightBlue] [->] (1,-9) -- (2,-14);
\draw[MidnightBlue] [->] (2,-14) -- (3,-11);
\draw[MidnightBlue] [->] (3,-11) -- (4,-8);
\draw[MidnightBlue] [->] (4,-8) -- (5,-13);
\draw[MidnightBlue] [->] (5,-13) -- (6,-10);
\draw[MidnightBlue] [->] (6,-10) -- (7,-15);
\draw[MidnightBlue] [->] (7,-15) -- (8,-12);

\node at (8.5, -8)  {$\cdots$};
\node at (8.5, -12)  {$\cdots$};
\end{scriptsize}
\end{tikzpicture}
\end{minipage}
\hspace{2cm}
\begin{minipage}{3.5cm}
\begin{tikzpicture}[scale=0.6]
\begin{scriptsize}
\node at (4,0) {$F_{8;(1,3)}$};
\filldraw [black] (0,0) circle (1pt);
\filldraw [black] (1,-1) circle (1pt);
\filldraw [black] (2,-2) circle (1pt);
\filldraw [black] (3,-3) circle (1pt);
\filldraw [black] (4,-4) circle (1pt);
\filldraw [black] (5,-5) circle (1pt);
\filldraw [black] (6,-6) circle (1pt);
\filldraw [black] (7,-7) circle (1pt);

\filldraw [black] (8,0) circle (1pt);
\node at (8.5, 0)  {$\cdots$};
\draw[cyan] [->] (7,-7) -- (8,0);

\draw[cyan] [->] (0,0) -- (1,-1);
\draw[cyan] [->] (1,-1) -- (2,-2);
\draw[cyan] [->] (2,-2) -- (3,-3);
\draw[cyan] [->] (3,-3) -- (4,-4);
\draw[cyan] [->] (4,-4) -- (5,-5);
\draw[cyan] [->] (5,-5) -- (6,-6);
\draw[cyan] [->] (6,-6) -- (7,-7);

\draw[] [dotted] (-0.5,-7.5) -- (8.5,-7.5);

\draw[orange] [->] (0,0) -- (1,-9);
\draw[orange] [->] (1,-1) -- (2,-10);
\draw[orange] [->] (2,-2) -- (3,-11);
\draw[orange] [->] (3,-3) -- (4,-12);
\draw[orange] [->] (4,-4) -- (5,-13);
\draw[orange] [->] (5,-5) -- (6,-14);
\draw[orange] [->] (6,-6) -- (7,-15);
\draw[orange] [->] (7,-7) -- (8,-8);

\filldraw [black] (0,-8) circle (1pt);
\filldraw [black] (4,-8) circle (1pt);
\filldraw [black] (6,-8) circle (1pt);
\filldraw [black] (8,-8) circle (1pt);

\filldraw [black] (1,-9) circle (1pt);
\filldraw [black] (3,-9) circle (1pt);
\filldraw [black] (5,-9) circle (1pt);
\filldraw [black] (7,-9) circle (1pt);

\filldraw [black] (2,-10) circle (1pt);
\filldraw [black] (4,-10) circle (1pt);
\filldraw [black] (6,-10) circle (1pt);
\filldraw [black] (8,-10) circle (1pt);

\filldraw [black] (1,-11) circle (1pt);
\filldraw [black] (3,-11) circle (1pt);
\filldraw [black] (5,-11) circle (1pt);
\filldraw [black] (7,-11) circle (1pt);

\filldraw [black] (2,-12) circle (1pt);
\filldraw [black] (4,-12) circle (1pt);
\filldraw [black] (6,-12) circle (1pt);
\filldraw [black] (8,-12) circle (1pt);

\filldraw [black] (3,-13) circle (1pt);
\filldraw [black] (5,-13) circle (1pt);
\filldraw [black] (7,-13) circle (1pt);

\filldraw [black] (2,-14) circle (1pt);
\filldraw [black] (4,-14) circle (1pt);
\filldraw [black] (6,-14) circle (1pt);
\filldraw [black] (8,-14) circle (1pt);

\filldraw [black] (3,-15) circle (1pt);
\filldraw [black] (5,-15) circle (1pt);
\filldraw [black] (7,-15) circle (1pt);

\draw[MidnightBlue] [->] (0,-8) -- (1,-11);
\draw[MidnightBlue] [->] (1,-11) -- (2,-14);
\draw[MidnightBlue] [->] (2,-14) -- (3,-9);
\draw[MidnightBlue] [->] (3,-9) -- (4,-12);
\draw[MidnightBlue] [->] (4,-12) -- (5,-15);
\draw[MidnightBlue] [->] (5,-15) -- (6,-10);
\draw[MidnightBlue] [->] (6,-10) -- (7,-13);
\draw[MidnightBlue] [->] (7,-13) -- (8,-8);

\draw[MidnightBlue] [->] (1,-9) -- (2,-12);
\draw[MidnightBlue] [->] (2,-12) -- (3,-15);
\draw[MidnightBlue] [->] (3,-15) -- (4,-10);
\draw[MidnightBlue] [->] (4,-10) -- (5,-13);
\draw[MidnightBlue] [->] (5,-13) -- (6,-8);
\draw[MidnightBlue] [->] (6,-8) -- (7,-11);
\draw[MidnightBlue] [->] (7,-11) -- (8,-14);

\draw[MidnightBlue] [->] (2,-10) -- (3,-13);
\draw[MidnightBlue] [->] (3,-13) -- (4,-8);
\draw[MidnightBlue] [->] (4,-8) -- (5,-11);
\draw[MidnightBlue] [->] (5,-11) -- (6,-14);
\draw[MidnightBlue] [->] (6,-14) -- (7,-9);
\draw[MidnightBlue] [->] (7,-9) -- (8,-12);

\draw[MidnightBlue] [->] (3,-11) -- (4,-14);
\draw[MidnightBlue] [->] (4,-14) -- (5,-9);
\draw[MidnightBlue] [->] (5,-9) -- (6,-12);
\draw[MidnightBlue] [->] (6,-12) -- (7,-15);
\draw[MidnightBlue] [->] (7,-15) -- (8,-10);

\node at (8.5, -8)  {$\cdots$};
\node at (8.5, -10)  {$\cdots$};
\node at (8.5, -12)  {$\cdots$};
\node at (8.5, -14)  {$\cdots$};
\end{scriptsize}
\end{tikzpicture}
\end{minipage}
\captionsetup{aboveskip=0pt,font=it}
\caption{The graphs $F_{8;(m_1,m_2)}$.}
\label{F8}
\end{figure}
\end{example}

\begin{example}\label{dim5}
In dimension $5$ the graph $\ourF$ has $3$ levels. In Figure \ref{Fdim5} we indicate two examples of the structure of $\ourF$ in the case where $r=3$.
\begin{figure}[H]
\begin{minipage}{4cm}
\begin{tikzpicture}[scale=0.6]
\begin{scriptsize}
\node at (2,-9) {$F_{3;(1,1,2)}$};
\filldraw [black] (0,0) circle (1pt);
\filldraw [black] (1,-1) circle (1pt);
\filldraw [black] (2,-2) circle (1pt);
\filldraw [black] (3,-3) circle (1pt);
\filldraw [black] (3,0) circle (1pt);

\draw[cyan] [->] (0,0) -- (1,-1);
\draw[cyan] [->] (1,-1) -- (2,-2);
\draw[cyan] [->] (2,-2) -- (3,0);
\draw[orange] [->] (2,-2) -- (3,-3);

\draw[] [dotted] (-0.5,-2.5) -- (4.5,-2.5);

\draw[orange] [->] (0,0) -- (1,-4);
\draw[orange] [->] (1,-1) -- (2,-5);

\filldraw [black] (0,-3) circle (1pt);
\filldraw [black] (1,-4) circle (1pt);
\filldraw [black] (2,-5) circle (1pt);

\draw[MidnightBlue] [->] (0,-3) -- (1,-4);
\draw[MidnightBlue] [->] (1,-4) -- (2,-5);
\draw[MidnightBlue] [->] (2,-5) -- (3,-3);

\draw[] [dotted] (-0.5,-5.5) -- (4.5,-5.5);

\filldraw [black] (0,-6) circle (1pt);
\filldraw [black] (1,-8) circle (1pt);
\filldraw [black] (2,-7) circle (1pt);
\filldraw [black] (3,-6) circle (1pt);

\filldraw [black] (1,-7) circle (1pt);
\filldraw [black] (2,-8) circle (1pt);

\filldraw [black] (2,-6) circle (1pt);
\filldraw [black] (3,-7) circle (1pt);
\filldraw [black] (3,-8) circle (1pt);

\draw[blue] [->] (0,-6) -- (1,-8);
\draw[blue] [->] (1,-8) -- (2,-7);
\draw[blue] [->] (2,-7) -- (3,-6);
\draw[blue] [->] (1,-7) -- (2,-6);
\draw[blue] [->] (2,-8) -- (3,-7);
\draw[blue] [->] (2,-6) -- (3,-8);

\draw[Goldenrod] [->] (0,0) -- (1,-7);
\draw[Goldenrod] [->] (1,-1) -- (2,-8);
\draw[Goldenrod] [->] (2,-2) -- (3,-6);

\draw[orange] [->] (0,-3) -- (1,-7);
\draw[orange] [->] (1,-4) -- (2,-8);
\draw[orange] [->] (2,-5) -- (3,-6);

\node at (3.5, -0)  {$\cdots$};
\node at (3.5, -3)  {$\cdots$};
\node at (3.5, -6)  {$\cdots$};
\node at (3.5, -7)  {$\cdots$};
\node at (3.5, -8)  {$\cdots$};

\end{scriptsize}
\end{tikzpicture}
\end{minipage}
\ \ \ 
\begin{minipage}{4cm}
\begin{tikzpicture}[scale=0.6]
\begin{scriptsize}
\node at (2,-9) {$F_{3;(1,2,1)}$};
\filldraw [black] (0,0) circle (1pt);
\filldraw [black] (1,-1) circle (1pt);
\filldraw [black] (2,-2) circle (1pt);
\filldraw [black] (3,-3) circle (1pt);
\filldraw [black] (3,0) circle (1pt);

\draw[cyan] [->] (0,0) -- (1,-1);
\draw[cyan] [->] (1,-1) -- (2,-2);
\draw[cyan] [->] (2,-2) -- (3,0);
\draw[orange] [->] (2,-2) -- (3,-3);

\draw[] [dotted] (-0.5,-2.5) -- (4.5,-2.5);

\draw[orange] [->] (0,0) -- (1,-4);
\draw[orange] [->] (1,-1) -- (2,-5);

\draw[] [dotted] (-0.5,-5.5) -- (4.5,-5.5);

\filldraw [black] (0,-3) circle (1pt);
\filldraw [black] (1,-5) circle (1pt);
\filldraw [black] (2,-4) circle (1pt);
\filldraw [black] (3,-3) circle (1pt);

\filldraw [black] (1,-4) circle (1pt);
\filldraw [black] (2,-5) circle (1pt);

\filldraw [black] (2,-3) circle (1pt);
\filldraw [black] (3,-4) circle (1pt);
\filldraw [black] (3,-5) circle (1pt);

\draw[MidnightBlue] [->] (0,-3) -- (1,-5);
\draw[MidnightBlue] [->] (1,-5) -- (2,-4);
\draw[MidnightBlue] [->] (2,-4) -- (3,-3);
\draw[MidnightBlue] [->] (1,-4) -- (2,-3);
\draw[MidnightBlue] [->] (2,-5) -- (3,-4);
\draw[MidnightBlue] [->] (2,-3) -- (3,-5);

\filldraw [black] (0,-6) circle (1pt);
\filldraw [black] (1,-7) circle (1pt);
\filldraw [black] (2,-8) circle (1pt);
\filldraw [black] (3,-6) circle (1pt);

\filldraw [black] (1,-8) circle (1pt);

\filldraw [black] (2,-6) circle (1pt);
\filldraw [black] (2,-7) circle (1pt);
\filldraw [black] (3,-7) circle (1pt);
\filldraw [black] (3,-8) circle (1pt);

\draw[blue] [->] (0,-6) -- (1,-7);
\draw[blue] [->] (1,-7) -- (2,-8);
\draw[blue] [->] (2,-8) -- (3,-6);

\draw[blue] [->] (1,-8) -- (2,-6);
\draw[blue] [->] (2,-6) -- (3,-7);
\draw[blue] [->] (2,-7) -- (3,-8);

\draw[Goldenrod] [->] (0,0) -- (1,-7);
\draw[Goldenrod] [->] (1,-1) -- (2,-8);
\draw[Goldenrod] [->] (2,-2) -- (3,-6);

\draw[orange] [->] (0,-3) -- (1,-8);
\draw[orange] [->] (1,-5) -- (2,-7);
\draw[orange] [->] (2,-4) -- (3,-6);
\draw[orange] [->] (1,-4) -- (2,-6);
\draw[orange] [->] (2,-5) -- (3,-7);
\draw[orange] [->] (2,-3) -- (3,-8);

\node at (3.5, -0)  {$\cdots$};
\node at (3.5, -3)  {$\cdots$};
\node at (3.5, -6)  {$\cdots$};
\node at (3.5, -7)  {$\cdots$};
\node at (3.5, -8)  {$\cdots$};

\end{scriptsize}
\end{tikzpicture}
\end{minipage}
\captionsetup{aboveskip=0pt,font=it}
\caption{Some graphs $F_{3;(m_1,m_2,m_3)}$.}
\label{Fdim5}
\end{figure}
\end{example}

\begin{remark}
We can read off the paths $\alpha_{t_1,t_2}:=e_{11}^{t_1}e_{12}e_{22}^{t_2}$ that lie inside $\mathcal{A}(r;(m_1,m_2))$ on the graph $F_{r,\underline{m}}$. The path $\alpha_{t_1,t_2}$ corresponds to the path in $F_{r,\underline{m}}$ (see Figure \ref{F13}) which has source $w_0$ and then follows the blue edges in level 1 $t_1$-times before it goes to the second level by the orange edge. At level 2 the path follows the blue edges until it reaches the vertex $v_{0}^i$ for the smallest possible $i$. Then the number $t_2$ is precisely the number of blue edges in level 2 which the path has to follow to reach the first $v_{0}^i$. This corresponds precisely to the so called admissible paths in the skew product graph used in \cite{jhhws:qlsga}.
\end{remark}


We now formalize the structure observed in these examples. 
We first define $\PP(r;\mm)$ as the set
\[
 \{1,\dots, k+1\}\times  \{0,\dots, r-1\} 
\]
equipped with the order
\[
(i,j)\myto(i',j')
\]
defined as the reflexive and transitive closure of
\[
(i,j)\myo (i',j')\Longleftrightarrow \left\{\begin{array}{c}i<i'\\\exists n\in \Zr: n(m_{i}-m_{i'})\equiv j-j' \Mod r\end{array}\right\}
\]
We let $\PP_0(r;\mm)$ denote the elements dominated by $(i,0)$ for some $i$.

\begin{lemma}\label{interpretmyo}
The statements
\begin{enumerate}[(i)]
\item $(i,j)\myo (i',j')$ in $\PP(r;\mm)$
\item There is a path from the periodic line in $\ourE\times_1\ZZ$ containing $((v_i,j),0)$ to the periodic line containing $((v_i',j'),0)$, not visiting another periodic line along the way
\item $i'>i$ and $\gcd(m_{i'}-m_i,r)\mid j'-j$
\end{enumerate}
are equivalent.
\end{lemma}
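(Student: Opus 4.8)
The plan is to route both nontrivial implications through the concrete criterion (iii), treating it as the common meeting point. The equivalence (i)$\Leftrightarrow$(iii) is elementary: by the definition of $\myo$, condition (i) asserts that $i<i'$ and that the congruence $n(m_i-m_{i'})\equiv j-j'\Mod r$ is solvable for $n\in\Zr$. The standard solvability criterion for linear congruences states that $ax\equiv b\Mod r$ has a solution exactly when $\gcd(a,r)\mid b$; applying it with $a=m_i-m_{i'}$ and $b=j-j'$, and using $\gcd(m_i-m_{i'},r)=\gcd(m_{i'}-m_i,r)$ together with the sign-insensitivity of divisibility, solvability becomes $\gcd(m_{i'}-m_i,r)\mid j'-j$. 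Since $i<i'$ is the same as $i'>i$, this is precisely (iii).

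The substance lies in (ii)$\Leftrightarrow$(iii), and here I would first fix a clean labelling of the periodic lines. For a vertex $((v_i,\ell),n)$ at level $i$ of $\ourE\times_1\Zb$ I would track the quantity $\ell-n\,m_i\Mod r$ and verify that it is left unchanged by the self-loop edges $e_{ii}$ (both the source $((v_i,\ell-m_i),n-1)$ and the range $((v_i,\ell),n)$ of such an edge yield the same value), while the hypothesis $\gcd(m_i,r)=1$ ensures that each residue class is swept out by a single bi-infinite chain of self-loops. Thus the periodic lines at level $i$ are in bijection with $\Zr$ through this invariant; in particular the line through $((v_i,j),0)$ carries label $j$ and the line through $((v_{i'},j'),0)$ carries label $j'$.

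Next I would analyse the connecting edges. A directed path realising (ii) strictly increases the level (every non-loop edge does so), hence if it is to avoid every intermediate periodic line it can traverse exactly one connecting edge $e_{ii'}$, running from level $i$ directly to level $i'$ and flanked only by self-loops that slide within the source and target lines. Computing the endpoint labels of $((e_{ii'},\ell),n)$ gives source label $\ell-n\,m_i$ at level $i$ and range label $\ell-n\,m_{i'}$ at level $i'$, with difference $n(m_{i'}-m_i)\Mod r$. Demanding the source label be $j$ and the range label be $j'$ forces $n(m_{i'}-m_i)\equiv j-j'\Mod r$, which is solvable in $n$ (the parameter $\ell$ being then determined by $\ell=j+n\,m_i$) precisely when $i<i'$ and $\gcd(m_{i'}-m_i,r)\mid j'-j$. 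This is again (iii), closing the cycle.

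I expect the main obstacle to be the bookkeeping in this last step: rigorously justifying that the ``no intermediate periodic line'' clause forces a single connecting edge rather than several, and correctly handling the interplay between the directedness of paths and the freedom to slide along self-loops, so that placing the source in line $j$ and the range in line $j'$ is genuinely \emph{equivalent} to the displayed congruence and not merely sufficient for it.
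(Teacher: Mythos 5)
Your proposal is correct and takes essentially the same route as the paper, whose proof of this lemma is a one-line assertion that $\myo$ ``precisely captures'' direct paths between periodic lines and that the defining congruence is ``clearly equivalent'' to the $\gcd$ condition; your write-up simply supplies the details behind both assertions (the line-labelling invariant $\ell-n\,m_i \bmod r$, the argument that avoiding intermediate lines forces exactly one connecting edge, and the standard solvability criterion $\gcd(a,r)\mid b$ for $an\equiv b \Mod{r}$), all of which check out.
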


\begin{proof} 
The relation $\myo$ precisely captures the situation where there is a direct path as indicated, and the condition defining $\myo$ is clearly equivalent to the one given with $\gcd$.
\end{proof}

\begin{lemma}\label{simplehasse}
When $(i,j)\myto(i',j')$, there exist $j_1,\dots j_{k-1}$ with  $k=i-i'$  so that 
\[
(i,j)\myo(i+1,j_1)\myo\cdots\myo (i+k-1,j_{k-1})\myo (i',j')
\]
\end{lemma}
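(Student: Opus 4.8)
The plan is to reduce the statement to a single ``factorization'' move and then iterate it. Since $\myto$ is by definition the transitive closure of $\myo$, the hypothesis $(i,j)\myto(i',j')$ supplies a chain
\[
(i,j)=(a_0,b_0)\myo(a_1,b_1)\myo\cdots\myo(a_s,b_s)=(i',j'),
\]
in which every step strictly increases the first coordinate. If I can show that each individual step whose level-gap exceeds $1$ factors through the immediately higher level, then iterating produces a chain in which the first coordinate increases by exactly $1$ at each step; such a chain necessarily visits each level $i,i+1,\dots,i'$ exactly once, and reading off the second coordinates yields the desired $j_1,\dots,j_{k-1}$ (with $k=i'-i$; the statement's ``$k=i-i'$'' is a sign typo, since $i<i'$). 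So everything reduces to the following claim: if $(a,b)\myo(a'',b'')$ with $a''\ge a+2$, then there is a $c$ with $(a,b)\myo(a+1,c)\myo(a'',b'')$.

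To prove this claim I would pass to the arithmetic description in Lemma~\ref{interpretmyo}(iii). Writing $d_1=\gcd(m_{a+1}-m_a,r)$, $d_2=\gcd(m_{a''}-m_{a+1},r)$ and $d=\gcd(m_{a''}-m_a,r)$, the two relations $(a,b)\myo(a+1,c)$ and $(a+1,c)\myo(a'',b'')$ amount precisely to the congruences $c\equiv b\pmod{d_1}$ and $c\equiv b''\pmod{d_2}$, while the hypothesis $(a,b)\myo(a'',b'')$ says $d\mid(b''-b)$. By the Chinese Remainder Theorem this pair of congruences has a common solution $c$ exactly when $\gcd(d_1,d_2)\mid(b-b'')$, and the resulting $c$ is determined modulo $\operatorname{lcm}(d_1,d_2)$.

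The heart of the matter is therefore the divisibility $\gcd(d_1,d_2)\mid d$: since $\gcd(d_1,d_2)$ divides both $m_{a+1}-m_a$ and $m_{a''}-m_{a+1}$, it divides their sum $m_{a''}-m_a$, and it also divides $r$, so it divides $\gcd(m_{a''}-m_a,r)=d$. Combined with $d\mid(b''-b)$ this gives $\gcd(d_1,d_2)\mid(b''-b)$, so the congruence system is solvable; and because $d_1\mid r$ and $d_2\mid r$ we have $\operatorname{lcm}(d_1,d_2)\mid r$, so a solution can be chosen with $c\in\{0,\dots,r-1\}$, as is needed for a genuine element of $\PP(r;\mm)$. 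This establishes the factorization claim, and hence the lemma.

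The only step with real content is the divisibility $\gcd(d_1,d_2)\mid d$ guaranteeing solvability of the congruences; I expect this to be the crux, with the remainder being bookkeeping about the transitive closure and about selecting representatives in $\{0,\dots,r-1\}$. A secondary point to get right is the iteration itself: I must check that repeated factorization terminates (each step reduces the level-gap of the remaining factor by one) and that concatenating the single-level refinements of the original chain really does hit every intermediate level between $i$ and $i'$.
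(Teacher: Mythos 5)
Your proof is correct, and its overall skeleton---unpacking the transitive closure into a chain, factoring any single step of level-gap at least two through the immediately higher level, and iterating until every step is single-level---is exactly the paper's. Where you genuinely diverge is in how the intermediate vertex is produced. The paper works directly from the definition of $\myo$: a step $(i_0,j_0)\myo(i_1,j_1)$ comes with a witness $n\in\Zr$ satisfying $n(m_{i_0}-m_{i_1})\equiv j_0-j_1 \pmod{r}$, and setting $j''=j_0-n(m_{i_0}-m_{i_0+1})$ makes both required congruences hold by telescoping, \emph{with the same witness $n$ for both factors}---no solvability analysis is needed at all. You instead pass through the gcd reformulation of Lemma \ref{interpretmyo}(iii) and solve the system $c\equiv b\pmod{d_1}$, $c\equiv b''\pmod{d_2}$ by the Chinese Remainder Theorem, the crux being $\gcd(d_1,d_2)\mid\gcd(m_{a''}-m_a,r)$; your divisibility argument for this is sound, as is your observation that $\operatorname{lcm}(d_1,d_2)\mid r$ lets you pick $c\in\{0,\dots,r-1\}$. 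Note, though, that the paper's explicit $j''$ is precisely one solution of your congruence system, so the CRT machinery can be bypassed entirely; what your route buys in exchange is slightly more information (solvability is characterized, and solutions are counted modulo $\operatorname{lcm}(d_1,d_2)$). You are also right that ``$k=i-i'$'' in the statement is a sign typo for $k=i'-i$; incidentally, the paper's own displayed congruence $n(m_{i_0+1}-m_{i_1})\equiv j''-j_0$ contains a typo (it should read $j''-j_1$), which your version of the computation implicitly corrects.
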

\begin{proof}
When $(i,j)\myto(i',j')$ we have
\[
(i,j)=(i_0,j_0)\myo(i_1,j_1)\myo\cdots\myo (i_\ell,j_{\ell})= (i',j')
\]
where necessarily the $i_k$ are increasing. If $i_1>i_0+1$ we note that we know from the outset that  $n(m_{i_0}-m_{i_1})\equiv j_0-j_1 \mod r$, so with 
\[
j''=j_0-n(m_{i_0}-m_{i_0+1})
\]
we obtain
\[
n(m_{i_0}-m_{{i_0}+1})\equiv j_0-j''\qquad n(m_{i_0+1}-m_{i_1})\equiv j''-j_0 \mod r
\]
which allows us to extend the chain to
\[
(i_0,j_0)\myo(i_0+1,j'')\myo(i_1,j_1).
\]
Repeating the argument leads to the general claim.
\end{proof}


We recall the concept of \emph{Hasse diagrams} to the reader; graphs describing finite partially ordered sets as undirected graphs  so that when a line connects two vertices, these vertices are never on the same vertical level and indicating that the higher one dominates the lower. Further, Hasse diagrams never indicate indirect relations that may be inferred from the transitivity of partial orderings.
\\
Consequently, Lemma \ref{simplehasse} says that the Hasse diagram of $(\PP(r;\mm),\myto)$ and $(\PP_0(r;\mm),\myto)$ will always be stratified in the sense that all edges will go between the layers with vertices $(i,\bullet)$ to layers with vertices $(i+1,\bullet)$.

\begin{example}
The three different ideal structures exhibited in Example \ref{eight} give $\PP_0(8;\mm)$ as indicated by Hasse diagrams
\[
\xymatrix{\bullet\ar@{-}[d]\\\bullet}\qquad\qquad
\xymatrix{\bullet\ar@{-}[d]\ar@{-}[drr]\\\bullet&&\bullet}\qquad\qquad
\xymatrix{\bullet\ar@{-}[d]\ar@{-}[dr]\ar@{-}[drr]\ar@{-}[drrr]\\\bullet&\bullet&\bullet&\bullet}
\]
\end{example}

\begin{example}
The two different ideal structures exhibited in Example \ref{dim5} give $\PP_0(3;\mm)$ as indicated by Hasse diagrams
\[
\xymatrix{\bullet\ar@{-}[d]\\\bullet\ar@{-}[d]\ar@{-}[dr]\ar@{-}[drr]\\\bullet&\bullet&\bullet}\qquad\qquad
\xymatrix{\bullet\ar@{-}[d]\ar@{-}[dr]\ar@{-}[drr]\\\bullet\ar@{-}[d]\ar@{-}[dr]\ar@{-}[drr]&\bullet\ar@{-}[dl]\ar@{-}[d]\ar@{-}[dr]&\ar@{-}[d]\ar@{-}[dl]\ar@{-}[dll]\bullet\\\bullet&\bullet&\bullet}
\]
\end{example}

\begin{theorem}\label{Thm:IdealLattice} The following are equivalent for fixed $r$ and $k$ and varying $m_i,n_i\in\Zru$:
\begin{enumerate}[(i)]
\item 
The lattice of ideals of $\QLS^{\gamma}$ and $\QLSalt^{\gamma}$ are isomorphic
\item The lattice of ideals of $C^*(\ourE\times_1\ZZ)$ and $C^*(\ourEalt\times_1\ZZ)$ are isomorphic.
\item $(\PP_0(r;\mm),\myto)= (\PP_0(r;\nn),\myto)$  as partially ordered sets
\item $(\PP(r;\mm),\myto)= (\PP(r;\nn),\myto)$  as partially ordered sets
\item For all $i=1,\dots,k$ we have $\gcd(m_{i+1}-m_i,r)=\gcd(n_{i+1}-n_i,r)$
\end{enumerate}
\end{theorem}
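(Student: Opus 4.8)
The plan is to route all five conditions through the purely number-theoretic statement (v), using (iv) (and its restriction (iii)) as a combinatorial hub and the ideal theory of graph $C^*$-algebras to attach (i) and (ii). First I would settle the combinatorial equivalences. By Lemma~\ref{simplehasse} the Hasse diagram of $(\PP(r;\mm),\myto)$ is stratified, so $\myto$ is the transitive closure of the covering relations between consecutive levels $i$ and $i+1$; by Lemma~\ref{interpretmyo} these are governed solely by $d_i:=\gcd(m_{i+1}-m_i,r)$, and each vertex $(i,j)$ has exactly $r/d_i$ up-covers $(i+1,j')$. Hence (v) fixes the bipartite pattern at every level and therefore forces the two orders to coincide on the common ground set, giving (iv) and, by restriction, (iii). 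Conversely the poset is graded with rank equal to level (there is a global chain $(1,0)\myto\cdots\myto(k+1,0)$), so reading the up-degree $r/d_i$ of a rank-$i$ vertex off the abstract poset recovers each $d_i$; the same count performed inside the down-set $\PP_0$ yields (iii)$\Rightarrow$(v). Thus (iii)$\Leftrightarrow$(iv)$\Leftrightarrow$(v).

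Next I would describe the ideal lattice in (ii). Since every edge of $\ourE\times_1\ZZ$ strictly raises the $\ZZ$-coordinate, the graph is acyclic, row-finite and has no sinks; it therefore satisfies Condition (K) vacuously and has no breaking vertices, so its ideals (all gauge invariant) correspond order-preservingly to hereditary saturated vertex sets. The crucial step is an \emph{all-or-nothing} lemma: a hereditary saturated $H$ contains a whole periodic line or is disjoint from it. I would prove this by downward induction on the level. The base case is the top level, where each vertex emits only its loop edge, so hereditarity and saturation force the entire line in or out. In the inductive step hereditarity pulls into $H$ all higher lines reachable from a vertex of the given line — complete by the inductive hypothesis — and periodicity of the skew-product labelling shows that every out-neighbour of the backward loop-predecessor already lies in $H$, so saturation drags the whole backward tail in. Consequently, using Lemma~\ref{interpretmyo}(ii) to identify reachability of lines with $\myto$, the hereditary saturated sets biject with the up-sets of $(\PP(r;\mm),\myto)$; Birkhoff's theorem then shows the lattice of up-sets determines and is determined by the finite poset, giving (ii)$\Leftrightarrow(\PP(r;\mm)\cong\PP(r;\nn))\Leftrightarrow$(iv).

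Finally I would attach (i). By Theorem~\ref{isowF}, $\QLS^{\gamma}$ is Morita equivalent to $C^*(\ourF)$, so the two have isomorphic ideal lattices; equivalently $\QLS^{\gamma}$ is the corner $P^0C^*(\ourE\times_1\ZZ)P^0$, which is full in the ideal generated by $P^0$, whose lattice is the interval below the hereditary saturated closure of the level-$0$ vertices. Running the all-or-nothing analysis inside $\ourF$ (the truncated lines are again all-or-nothing: the top level by the loop argument, lower levels by the same induction) identifies this lattice with the up-sets of the sub-poset of lines reachable from $(1,0)$. Every level-$i$ vertex of this sub-poset retains its full up-degree $r/d_i$, since any admissible $(i+1,j')$ is automatically reachable; so the up-degree count recovers the $d_i$ exactly as for $\PP$ and $\PP_0$. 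Hence (i) is equivalent to the isomorphism of these sub-posets, which records precisely the same invariants $\{d_i\}$ as (iii), and therefore (i)$\Leftrightarrow$(iii)$\Leftrightarrow$(v); combined with the hub this closes all the equivalences.

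I expect the main obstacle to be the all-or-nothing lemma, and within it the saturation half of the downward induction, where one must exploit the period-$r$ structure of the skew-product labelling to guarantee that the out-neighbours of a backward loop-predecessor into higher levels all lie in $H$. Everything else — Condition (K), the ideal versus hereditary-saturated correspondence for row-finite acyclic graphs, the Morita invariance of the ideal lattice, Birkhoff duality, and the up-degree recovery of the $d_i$ — is standard once this structural lemma is available.
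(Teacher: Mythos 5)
Your proposal is correct and follows essentially the same route as the paper's proof: identifying ideal lattices with lattices of hereditary saturated vertex sets, showing these are exactly unions of periodic lines so that they correspond to the order-closed subsets of $(\PP(r;\mm),\myto)$ resp.\ $(\PP_0(r;\mm),\myto)$, invoking the standard poset--distributive-lattice duality, recovering the invariants $\gcd(m_{i+1}-m_i,r)$ by counting covering degrees level by level via Lemmas \ref{interpretmyo} and \ref{simplehasse}, and attaching (i) through Theorem \ref{isowF}. Your ``all-or-nothing'' lemma, with its downward induction using saturation and the fact that two vertices on the same periodic line reach the same set of lines, is a careful writeup of exactly the step the paper compresses into ``each hereditary set can be described uniquely by the periodic lines it contains'' and ``it is easy to see that any hereditary set must also be saturated.''
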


It is obvious by (i) that all quantities discussed in (i)--(v) are invariants of equivariant isomorphism of the \QLS. As we shall discuss further below, they are not in general complete.

\begin{proof}
We start out by proving that (ii) and (iv) are equivalent.

To show this, we note that since the $C^*$-algebras in (i) and (ii) are graph $C^*$-algebras of real rank zero (even AF), their ideal lattices are again isomorphic to the lattice of hereditary and saturated sets of the graphs $\ourE\times_1\ZZ$ and $\ourEalt\times_1\ZZ$. Each hereditary set can be described uniquely by the periodic lines it contains, and since each such line contains exactly one vertex $((v,i),0)$, the hereditary subset is uniquely determined by the 
 subset of $\PP(r;\mm)$ thus defined.

As noted in Lemma \ref{interpretmyo}(ii), we  have defined $\myo$ reflecting the direct arrows from one circle to another, so $(i,j)\myto(i',j')$ precisely captures the situation that whenever the periodic line at $(i,j)$ is contained in a hereditary set, so must the line at $(i',j')$. It is easy to see that any hereditary set must also be saturated, so we conclude that (i) is equivalent to the claim that the lattices of  downward directed subsets of  $(\PP(r;\mm),\myto)$ and $(\PP(r;\nn),\myto)$ are isomorphic. It is a standard fact (explained in \cite{memgap:cctdl}) that this is equivalent to (iii).

The argument that (i) and (iii) are equivalent follows in exactly the same way, using also that by Theorem \ref{isowF} we see that the ideal lattice of $\QLS^\gamma$ is isomorphic to that of $C^*(F_{r;\mm})$. 

When (v) holds, we get by invoking Lemma \ref{interpretmyo}(iii) that $(i,j)\myo (i+1,j')$ simultaneously in the four  ordered sets considered in (iii) and (iv), and by Lemma \ref{simplehasse} this shows that these sets are pairwise isomorphic (in fact identical). Conversely, by considering a maximal element in either set and working downwards allows us to read off $r/\gcd(m_{i'}-m_i,r)$ as the number of immediate predecessors at each level.
\end{proof}

We denote by $\divisors(n)$ the number of positive divisors of $n$. The following result is elementary, but we were not able to locate it in the literature.

\begin{lemma}
\[
\{d: d\mid r, 2\mid r\Longrightarrow 2\mid d\}=\{\gcd(x-y,r): x,y\in \Zru\}
\]
and consequently the number of elements in the sets are $\divisors(r)$ for odd $r$ and $\divisors(r/2)$ for even $r$.
\end{lemma}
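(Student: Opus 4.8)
The plan is to first reduce the two-variable description on the right-hand side to a one-variable one. Since $\Zru$ is a group under multiplication and multiplying by a unit does not change the gcd with $r$, for $x,y\in\Zru$ we have $\gcd(x-y,r)=\gcd(y(xy^{-1}-1),r)=\gcd(xy^{-1}-1,r)$, and $xy^{-1}$ ranges over all of $\Zru$ as $x$ does. Hence $\{\gcd(x-y,r):x,y\in\Zru\}=\{\gcd(u-1,r):u\in\Zru\}$, and it suffices to identify the latter set. The inclusion of the right-hand side into the left is then immediate: $\gcd(u-1,r)$ is by definition a divisor of $r$, and if $r$ is even then every $u\in\Zru$ is odd, so $u-1$ is even and $2\mid\gcd(u-1,r)$.

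For the reverse inclusion I would fix a divisor $d\mid r$ satisfying the parity constraint and build a unit $u$ with $\gcd(u-1,r)=d$ by the Chinese Remainder Theorem, prescribing $u$ modulo each prime power $p^{e_p}$ exactly dividing $r$ independently. Writing $d=\prod_p p^{d_p}$ with $0\le d_p\le e_p$, the requirement $\gcd(u-1,r)=d$ becomes $\min(v_p(u-1),e_p)=d_p$ for every $p\mid r$, together with the unit condition $p\nmid u$.

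For odd $p$ one can take $u\equiv 1+p^{d_p}\pmod{p^{e_p}}$ when $1\le d_p< e_p$ (giving $v_p(u-1)=d_p$, with $u\equiv 1$ a unit), $u\equiv 1\pmod{p^{e_p}}$ when $d_p=e_p$, and $u\equiv 2\pmod{p^{e_p}}$ when $d_p=0$ (legitimate since $p\ge 3$ leaves a residue distinct from both $0$ and $1$). The only delicate point, and the place where the hypothesis enters, is $p=2$: a unit is necessarily odd, forcing $v_2(u-1)\ge 1$, so $d_2=0$ can never occur — which is exactly why the constraint $2\mid r\Rightarrow 2\mid d$ is both necessary and, once assumed, harmless, since then $d_2\ge 1$ and the same recipe ($u\equiv 1+2^{d_2}$ or $u\equiv 1$) applies verbatim. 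Splicing these choices together via CRT produces a unit $u$ with $\gcd(u-1,r)=\prod_p p^{d_p}=d$, as desired.

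Finally, for the cardinality count I would write $r=2^a m$ with $m$ odd. For odd $r$ there is no constraint and the set is the full set of divisors of $r$, of which there are $\divisors(r)$. For even $r$ (so $a\ge 1$) the set consists of the even divisors of $r$; since every divisor of $r$ is $2^i$ times a divisor of $m$ with $0\le i\le a$, the even ones are precisely those with $1\le i\le a$, numbering $a\,\divisors(m)=\divisors(2^{a-1}m)=\divisors(r/2)$. I expect the prime-by-prime CRT construction, and in particular the careful bookkeeping at $p=2$, to be the main point requiring attention, while the reduction to $u-1$ and the divisor count are routine.
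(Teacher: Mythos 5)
Your proposal is correct and takes essentially the same route as the paper: the easy inclusion via the parity observation that units modulo an even $r$ are odd, and the reverse inclusion via a Chinese-remainder construction of a unit $u$ with $\gcd(u-1,r)=d$, which is exactly the paper's strategy of taking $y=1$ and building $x$ prime power by prime power (your reduction $\gcd(x-y,r)=\gcd(xy^{-1}-1,r)$ just makes that normalization explicit). Your local recipe ($u\equiv 1+p^{d_p}$, $1$, or $2$ modulo $p^{e_p}$) differs only cosmetically from the paper's choice of $\delta_i$ or $\delta_i+1$ in each coordinate, and your divisor count correctly supplies the arithmetic the paper leaves unstated.
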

\begin{proof}
For the inclusion from right to left,  fix $z=\operatorname{gcd}(x-y, r)$. Obviously, $z\mid r$, and if $r$ is even, both $x$ and $y$ must be odd to be prime to it. Hence $2\mid z$.

In the other direction, write
$$
r=p_{1}^{n_{1}} p_{2}^{n_{2}} \cdots p_{k}^{n_{k}}
$$
where $n_{i} \geq 1$ and $p_{1}<p_{2}<\cdots<p_{n}$.
For $d$ in the leftmost set, we write
$$
d=p_{1}^{m_{1}} p_{2}^{m_{2}} \cdots p_{k}^{m_{k}}
$$
where $0 \leq m_{i} \leq n_{i}$. Note that if $p_{1}=2$,
then $m_{1}>0$.

We now appeal to the Chinese remainder theorem, noting that the canonical map
$$
\varphi: \mathbb{Z} / r \rightarrow \mathbb{Z} / p_{1}^{n_{1}} \times \cdots \times \mathbb{Z} / p_{k}^{n_{k}}
$$
given by
$$
\varphi(\ell)=\left(x \bmod p_{1}^{n_{1}}, \ldots, x \bmod p_{k}^{n_{k}}\right)
$$
is a ring isomorphism, and that
$$
\gcd(x, r)=p_{1}^{\ell_{1}} \cdots p_{k}^{\ell_{k}}
$$
where 
$$
\ell_{i}=\max \left\{\ell \leq n_{i}:p_{i}^{\ell}\mid\left(x \bmod p_{i}^{m_{i}}\right)\right.\}.
$$
The latter observation follows from noting that $p_{i}$ cannot divide $\operatorname{gcd}(x, r)$ more than $n_i$ times because of $r$, and since
$$
x-(x \bmod p_{i}^{n_i})=y p_{i}^{n_i}
$$
for some $y$, we see that $p_{i}^{\ell}$ for $\ell \leq n_{i}$ divides $x$ if and only if it divides $x \bmod p_{i}^{n_i}$.

Now we construct $x$ so that
$$
\begin{aligned}
\operatorname{gcd}(x, r)=1 \ \text{and} \ \operatorname{gcd}(x-1, r)=d.
\end{aligned}
$$
Let
$$
\phi(d)=\left(\delta_{1}, \ldots, \delta_{k}\right)
$$
and let $x=\varphi^{-1}\left(\xi_{1}, \dots, \xi_{k}\right)$ with
$$
\xi_{i}= \begin{cases}\delta_{i}+1 & p_{i}\nmid \delta_{i}+1 \\ \delta_{i} & p_{i} \mid \delta_{i}+1\end{cases}
$$
Note that $p_{i} \mid \xi_{i}$ in $\mathbb{Z}/p_{i}^{n_{i}}$ by construction, so that $\operatorname{gcd} ( x, r)=1$ as seen above.

Let $z=x-1$ and note that since
$\varphi$ is a ring isomorphism,
$$
\varphi(z)=\left(\zeta_{1}, \ldots, \zeta_{n}\right)
$$
with
$$
\zeta_{i}= \begin{cases}\delta_{i} & p_{i} \nmid \delta_{i}+1 \\ \delta_{i}-1 & p_{i} \mid \delta_{i}+1\end{cases}
$$

If $m_{i}>0$, we know that $p_{i} \mid \delta_{i}$ and consequently $p_{i} \nmid\delta_{i}+1$, and so $\zeta_{i}=\delta_{i}$. We conclude that $p_{i}^{m_i} \mid \zeta_{i}$ as a maximal power.
If $m_{i}=0$, we have noted that $p_{i}>2$. If $\beta_{i} \mid \delta_{i}+1$, we conclude that $p_{i} \nmid \delta_{i}-1$. We already know that $p_{i} \nmid\delta_{i}$, so in both cases we get  that $m_{i}=0$ is the maximal power, completing the proof that $\operatorname{gcd}(z, r)=d$.
\end{proof}

\begin{corollary}
The number of different ideal lattices of \QLS\ among all choices of $\mm$ is
$
\divisors(r)^k
$
for odd $r$ and 
$
\divisors(r/2)^k
$
for even $r$.
\end{corollary}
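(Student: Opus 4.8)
The plan is to read off the count directly from Theorem \ref{Thm:IdealLattice} together with the preceding lemma. By the equivalence of conditions (i) and (v) in Theorem \ref{Thm:IdealLattice}, the isomorphism class of the ideal lattice is a complete invariant equal to the tuple of consecutive greatest common divisors: two weight vectors $\mm,\nn\in(\Zru)^{k+1}$ give isomorphic ideal lattices if and only if $\gcd(m_{i+1}-m_i,r)=\gcd(n_{i+1}-n_i,r)$ for every $i=1,\dots,k$. Consequently, counting distinct ideal lattices is the same as counting distinct tuples
\[
\left(\gcd(m_2-m_1,r),\dots,\gcd(m_{k+1}-m_k,r)\right)
\]
as $\mm$ ranges over $(\Zru)^{k+1}$. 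So everything reduces to an elementary counting problem.

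Each coordinate of such a tuple is of the form $\gcd(x-y,r)$ with $x,y\in\Zru$, hence lies in the set $S:=\{\gcd(x-y,r):x,y\in\Zru\}$ described in the preceding lemma; thus every attainable tuple lies in $S^k$. The key point I would establish is that the assignment $\mm\mapsto(\gcd(m_{i+1}-m_i,r))_{i=1}^k$ is in fact \emph{surjective} onto $S^k$, i.e.\ that the consecutive gcd's can be prescribed independently. The device that makes this work is multiplicative invariance: for units $x,y$ one has $\gcd(x-y,r)=\gcd\bigl(xy^{-1}-1,r\bigr)$ since $y$ is invertible modulo $r$, and $xy^{-1}$ is again a unit, so $S=\{\gcd(t-1,r):t\in\Zru\}$. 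Given any target tuple $(d_1,\dots,d_k)\in S^k$, I would then build $\mm$ greedily: set $m_1=1$, and having chosen the unit $m_i$, pick a unit $t$ with $\gcd(t-1,r)=d_i$ and put $m_{i+1}=m_it$. Then $m_{i+1}$ is a unit and $\gcd(m_{i+1}-m_i,r)=\gcd\bigl(m_i(t-1),r\bigr)=\gcd(t-1,r)=d_i$, realizing the prescribed tuple.

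It follows that the number of distinct tuples, and hence of distinct ideal lattices, is exactly $|S|^k$. Applying the cardinality statement of the preceding lemma, $|S|=\divisors(r)$ when $r$ is odd and $|S|=\divisors(r/2)$ when $r$ is even, which yields the asserted counts $\divisors(r)^k$ and $\divisors(r/2)^k$. The only genuinely non-formal step is the surjectivity onto $S^k$: a priori the requirement that \emph{all} of $m_1,\dots,m_{k+1}$ be units could couple the successive differences and shrink the image below $S^k$, and the main obstacle is to decouple them. The multiplicative substitution $m_{i+1}=m_it$ is what dissolves this coupling, since scaling by a unit leaves the gcd with $r$ unchanged while keeping every $m_i$ invertible.
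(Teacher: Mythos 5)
Your proof is correct and follows exactly the route the paper intends: the corollary is stated there without proof as an immediate consequence of the equivalence (i)$\Leftrightarrow$(v) in Theorem \ref{Thm:IdealLattice} together with the preceding lemma, whose own proof constructs, for each admissible $d$, a unit $x$ with $\gcd(x-1,r)=d$ --- precisely the $y=1$ normalization you use. Your multiplicative scaling $m_{i+1}=m_i t$ is a clean way of making explicit the surjectivity onto $S^k$ that the paper leaves implicit, and it is exactly the right device for decoupling the consecutive differences while keeping all entries in $\Zru$.
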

\begin{corollary}\label{easy34612}
When $r\in\{3,4,6,12\}$, the invariants for equivariant isomorphism of \QLS\ listed in Theorem \ref{Thm:IdealLattice} are complete.
\end{corollary}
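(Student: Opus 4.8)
The plan is to prove completeness by a counting collapse rather than by exhibiting isomorphisms case by case. Fix $r$ and $k$ and let $W$ be the set of weight vectors $\mm=(m_1,\dots,m_{k+1})$ with $m_i\in\Zru$, so $|W|=\varphi(r)^{k+1}$. On $W$ there are three successively coarser equivalence relations: being related by unit scaling $\mm\mapsto\alpha\mm$ for $\alpha\in\Zru$; being equivariantly isomorphic, $\simeq_\gamma$; and carrying the same invariant of Theorem \ref{Thm:IdealLattice}. By Lemma \ref{Lemma:simpleiso} unit scaling implements an equivariant isomorphism, and any equivariant isomorphism preserves the ideal lattice, so these relations are genuinely nested: scaling refines $\simeq_\gamma$, which refines equality of the invariant. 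Passing to quotients therefore yields a chain of surjections
\[
W/\!\sim\ \twoheadrightarrow\ W/\!\simeq_\gamma\ \twoheadrightarrow\ \{\text{invariants realised as }\mm\text{ varies}\}.
\]

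Next I would pin down the two outer cardinalities. Scaling acts freely, since $\alpha\mm=\mm$ forces $\alpha m_1=m_1$ and hence $\alpha=1$ as $m_1$ is a unit; thus every orbit has $\varphi(r)$ elements and $|W/\!\sim|=\varphi(r)^{k}$. For the rightmost set, the preceding Corollary already records that the number of distinct invariants realised is $\divisors(r)^{k}$ for odd $r$ and $\divisors(r/2)^{k}$ for even $r$; I would invoke it verbatim, so that no separate argument about hitting every tuple of $\gcd$'s is required (that surjectivity is exactly what the preceding Corollary, via the $\gcd$-set Lemma, asserts). Hence the displayed chain is a pair of surjections of finite sets whose endpoints have sizes $\varphi(r)^{k}$ and $\divisors(r)^{k}$ (resp.\ $\divisors(r/2)^{k}$).

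The crux is the numerical coincidence for the listed moduli: one checks directly that $\varphi(3)=\divisors(3)=2$, $\varphi(4)=\divisors(2)=2$, $\varphi(6)=\divisors(3)=2$, and $\varphi(12)=\divisors(6)=4$, so that in each of the four cases the two endpoint cardinalities agree. Since a surjection between finite sets of equal cardinality is a bijection, the composite $W/\!\sim\,\to\{\text{invariants}\}$ is a bijection, which forces both intermediate surjections to be bijections as well. In particular $W/\!\simeq_\gamma\,\to\,\{\text{invariants}\}$ is injective, and this injectivity is precisely the statement that the invariants of Theorem \ref{Thm:IdealLattice} are complete for these $r$.

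I expect the only genuine obstacle to be bookkeeping rather than ideas: one must be careful that the count of realised invariants is taken directly from the preceding Corollary, and that scaling is verified to act freely so the identity $|W/\!\sim|=\varphi(r)^{k}$ holds on the nose. Everything else reduces to the elementary observation that three nested equivalence relations with matching endpoint cardinalities must coincide, so that — beyond the scalings furnished by Lemma \ref{Lemma:simpleiso} — no further explicit equivariant isomorphisms need to be constructed. (It is also worth remarking, as a sanity check on sharpness, that the equality $\varphi(r)=\divisors(r)$, resp.\ $\varphi(r)=\divisors(r/2)$, fails for the next candidates such as $r=5,8,24$, which is consistent with completeness being special to $\{3,4,6,12\}$.)
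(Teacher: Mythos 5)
Your proof is correct, but it takes a genuinely different route from the paper's. The paper argues pointwise: for $r\in\{3,4,6,12\}$ it records the implication $\gcd(m-x,r)=\gcd(n-x,r)\Longrightarrow m=n$ for $x,m,n\in\Zru$ (equation \eqref{extractm}), normalizes $m_1=n_1=1$ via Lemma \ref{Lemma:simpleiso}, and then applies this implication successively to condition (v) of Theorem \ref{Thm:IdealLattice} to conclude $m_i=n_i$ for all $i$ --- so equal invariants force literally equal weights after scaling. Your counting collapse reaches the same conclusion globally: the chain of surjections from scaling classes through $\simeq_\gamma$-classes onto realized invariants, combined with the cardinality coincidences $\varphi(3)=\divisors(3)$, $\varphi(4)=\divisors(2)$, $\varphi(6)=\divisors(3)$, $\varphi(12)=\divisors(6)$, pinches all maps into bijections. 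The two mechanisms are essentially dual: given the surjectivity of $m\mapsto\gcd(m-x,r)$ onto the admissible divisor set (supplied by the paper's $\gcd$ lemma), the endpoint equality $\varphi(r)^k=\divisors(r)^k$ (resp.\ $\divisors(r/2)^k$) is equivalent to the injectivity that \eqref{extractm} asserts. What your route buys: you never verify \eqref{extractm} by hand, and you obtain as a by-product the sharper statement that for these $r$ equivariant isomorphism coincides exactly with unit scaling, each invariant value being attained by precisely one scaling class. What it costs: you lean on the preceding Corollary's count of realized invariants --- in particular on the implicit fact that every admissible tuple of $\gcd$'s is realized by some weight vector, which underlies that count --- whereas the paper's argument is self-contained modulo the one-line verification of \eqref{extractm}. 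Your supporting steps check out: the scaling action is indeed free (if $\alpha\mm=\mm$ then $\alpha m_1\equiv m_1 \pmod r$ with $m_1$ a unit, so $\alpha=1$), giving $|W/\!\sim|=\varphi(r)^k$ on the nose, and the standard fact that a surjection between finite sets of equal cardinality is a bijection closes the argument correctly.
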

\begin{proof}
With such an $r$, with $x,m,n\in\Zru$ we have
\begin{equation}\label{extractm}
\gcd(m-x,r)=\gcd(n-x,r) \Longrightarrow m=n.
\end{equation}
By Lemma \ref{Lemma:simpleiso}, when we have \QLS\ and \QLSalt\ given, we may assume up to equivariant isomorphism that $m_1=n_1=1$, and then show by invoking \eqref{extractm} successively  that when (iii) of Theorem \ref{Thm:IdealLattice} holds, we in fact have $m_i=n_i$ for all $i$.
\end{proof}

\section{Dimension 3}
In order to construct explicit equivariant isomorphisms we will in this section view $C(L_q^3(r;\underline{m}))$ as the fixed point algebra  $C^*(L_3)^{\rho_{\underline{m}}^r}$ of the graph $C^*$-algebra $C^*(L_3)$ that describes the quantum 3-sphere. Hence all statements will be on finding equivariant isomorphisms between $C^*(L_3)^{\rho_{\underline{m}}^r}$ and $C^*(L_3)^{\rho_{\underline{n}}^r}$. 

Let the vertices and edges of the graph $L_3$ and $L_3^{r;(m_1,m_2)}$ be labelled as in Figure \ref{quantum3sphere} and \ref{graphlensspace} respectively. 
Let  $m_i,n_i\in\Zru$. For $t_i,t_j\in \{0,1,\dots,r-1 \}$ let $\alpha_{t_i,t_j}:=e_{ii}^{t_i}e_{ij}e_{jj}^{t_j}$. Then 
$$
{\mathcal{A}}(r;(m_1,m_2))=\{\alpha_{t_2,t_2} : \ m_1(t_1+1)+m_2t_2\equiv 0 \pmod{r}\}
$$
and
$$
\newSStilde{(m_1,m_2)}=\{t_1+t_2+1\lvert \ \alpha_{t_1,t_2}\in \mathcal{A}(r;(m_1,m_2)) \}
$$
$$
\newSS{(m_1,m_2)}=\{t_1+t_2+1 \mod{r} \lvert \ \alpha_{t_1,t_2}\in \mathcal{A}(r;(m_1,m_2)) \}
$$
by Definition \ref{def:multisets}. 

\begin{proposition}\label{PropGenerators3} 
Let $r$ be a positive integer and $m_1,m_2$ be positive integers such that $\gcd(m_i,r)=1$ for $i=1,2$ and let $C^*(\{S_{e_{ii}^r},\ S_{\alpha_{t_1,t_2}} \})$ be the $C^*$-subalgebra of $C^*(L_3)$ generated by the elements 
\begin{equation}\label{generators}
S_{e_{11}^r}, \ S_{e_{22}^r}, \ S_{\alpha_{t_1,t_2}} \ \text{with} \ \alpha_{t_1,t_2}\in  \mathcal{A}(r;(m_1,m_2)).
\end{equation} 
Then there exists an isomorphism 
$$
\phi: C^*(L_3^{r;(m_1,m_2)})\to C^*(\{S_{e_{ii}^r},\ S_{\alpha_{t_1,t_2}} \})\subseteq  C^*(L_3)^{\rho_{\underline{m}}^r}
$$
$$
\begin{aligned}
P_{w_1}&\mapsto S_{e_{11}^r}^*S_{e_{11}^r}=P_{v_1} \\
P_{w_2}&\mapsto S_{e_{22}^r}^*S_{e_{22}^r}=P_{v_2} \\
S_{f_1}&\mapsto S_{e_{11}^r} \\
S_{f_2}&\mapsto S_{e_{22}^r}
\end{aligned}
$$
and $S_{g_i}$ is mapped to one of the $S_{\alpha_{t_1,t_2}}, \alpha_{t_1,t_2}\in  \mathcal{A}(r;(m_1,m_2))$.  

It then follows that $C^*(L_3)^{\rho_{\underline{m}}^r}$ is generated as a $C^*$-algebra by the elements in \eqref{generators}. 
\begin{proof}
We will first show that the image of $\phi$ satisfies the Cuntz-Krieger relations for $L_3^{r;(m_1,m_2)}$. Then the map $\phi$ will be a $*$-homomorphism which is clearly surjective. 
The Cuntz-Krieger relations for $L_3^{r;(m_1,m_2)}$ are as follows: 
\begin{equation}
\begin{aligned}\label{relation1}
S_{f_1}^*S_{f_1}&=P_{w_1}, \\
S_{f_2}^*S_{f_2}&=S_{f_2}S_{f_2}^*=P_{w_2}, \\
S_{g_i}^*S_{g_i}&=P_{w_2}, i=1,\dots,r \\
S_{f_1}S_{f_1}^*&\leq P_{w_1}, \ S_{g_i}S_{g_i}^*\leq P_{w_1}, i=1,\dots,r
\end{aligned}
\end{equation}
\begin{equation}\label{relation2}
P_{w_1}=S_{f_1}S_{f_1}^*+\sum_{i=1}^r S_{g_i}S_{g_i}^*
\end{equation}
It is clear that the image satisfies the relations in \eqref{relation1} since for any finite path $\alpha$ we have $S_{\alpha}S_{\alpha}^*\leq P_{s(\alpha)}$ and $S_{\alpha}^*S_{\alpha}=P_{r(\alpha)}$. The image of $\phi$ satisfies \eqref{relation2} by the following calculation: 
$$
\begin{aligned}
&S_{e_{11}^r}S_{e_{11}^r}^*+S_{e_{11}^{r-1}}S_{e_{12}}S_{e_{12}}^*S_{e_{11}^{r-1}}^*+\sum_{\substack{t=0 \\ t_2=-m_2^{-1}m_1(t_1+1) \mod{r}}}^{r-2}S_{e_{11}^{t_1}}S_{e_{12}}S_{e_{22}^s}S_{e_{22}^{t_2}}^*S_{e_{12}}^*S_{e_{11}^{t_1}}^* \\
&=S_{e_{12}}S_{e_{12}}^*+S_{e_{11}^r}S_{e_{11}^r}^*+S_{e_{11}^{r-1}}S_{e_{12}}S_{e_{12}}^*S_{e_{11}^{r-1}}^*+\sum_{t_1=1}^{r-2}S_{e_{11}^t}S_{e_{12}}S_{e_{12}}^*S_{e_{11}^t}^* \\
&=S_{e_{12}}S_{e_{12}}^*+S_{e_{11}^{r-1}}(S_{e_{11}}S_{e_{11}}^*+S_{e_{12}}S_{e_{12}}^*)S_{e_{11}^{r-1}}^*+\sum_{t_1=1}^{r-2}S_{e_{11}^{t_1}}S_{e_{12}}S_{e_{12}}^*S_{e_{11}^{t_1}}^* \\ 
&=S_{e_{12}}S_{e_{12}}^*+S_{e_{11}^{r-1}}P_{v_1}S_{e_{11}^{r-1}}^*+S_{e_{11}^{r-2}}S_{e_{12}}S_{e_{12}}^*S_{e_{11}^{r-2}}^*+\sum_{t_1=1}^{r-3}S_{e_{11}^{t_1}}S_{e_{12}}S_{e_{12}}^*S_{e_{11}^{t_1}}^* \\ 
&=S_{e_{12}}S_{e_{12}}^*+S_{e_{11}^{r-2}}(S_{e_{11}^r}S_{e_{11}^r}^*+S_{e_{12}}S_{e_{12}}^*)S_{e_{11}^{r-2}}^*+\sum_{t_1=1}^{r-3}S_{e_{11}^{t_1}}S_{e_{12}}S_{e_{12}}^*S_{e_{11}^{t_1}}^* \\ 
&=\cdots = S_{e_{12}}S_{e_{12}}^*+S_{e_{11}^2}S_{e_{11}^2}^* +S_{e_{11}}S_{e_{12}}S_{e_{12}}^*S_{e_{11}}^*\\
&=  S_{e_{12}}S_{e_{12}}^*+S_{e_{11}}(S_{e_{12}}S_{e_{12}}^*+S_{e_{11}}S_{e_{11}}^*)S_{e_{11}}^* =P_{v_1}
\end{aligned}
$$
For injectivity we cannot apply the Cuntz-Krieger uniqueness theorem since $f_{2}$ is a vertex-simple loop without an exit. Instead we will apply the generalised Cuntz-Krieger uniqueness theorem presented in \cite{ws:gckut}. We clearly have that $\phi(P_{w_i})\neq 0$ for $i=1,2$ hence it follows from \cite[Theorem 1.2]{ws:gckut} that $\phi$ is injective if and only if the spectrum of $\phi(S_{f_{2}})=S_{e_{22}^r}$ contains the entire unit circle. By the last part of the proof of Theorem 2.4 in \cite{akdpir:ckadg} it follows that this is indeed the case.

Since $C^*(L_3)^{\rho_{\underline{m}}^r}\cong C^*(L_3^{r,\underline{m}})$ by \cite{jhhws:qlsga} we obtain that $C^*(L_3)^{\rho_{\underline{m}}^r}$ is indeed isomorphic to the $C^*$-algebra generated by the elements in \eqref{generators}.  

\end{proof}
\end{proposition}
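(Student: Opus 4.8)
The plan is to build $\phi$ from the universal property of $C^*(L_3^{r;(m_1,m_2)})$ and then establish surjectivity and injectivity separately. Recall from Figure \ref{graphlensspace} that in $L_3^{r;(m_1,m_2)}$ the vertex $w_1$ emits the loop $f_1$ together with $r$ parallel edges $g_1,\dots,g_r$ ranging into $w_2$, while $w_2$ emits only the loop $f_2$, which is thus a cycle without an exit. First I would check that the proposed images form a Cuntz--Krieger $L_3^{r;(m_1,m_2)}$-family inside $C^*(L_3)$. The relations $S_\alpha^*S_\alpha=P_{r(\alpha)}$ and $S_\alpha S_\alpha^*\le P_{s(\alpha)}$, valid for every path $\alpha$, give (CK1) and (CK2) at once; and because $e_{22}$ is already a loop without exit in $L_3$ we have $S_{e_{22}^r}S_{e_{22}^r}^*=P_{v_2}$, matching that $f_2$ has no exit.

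The one relation requiring work is (CK3) at $w_1$, namely
\[
P_{v_1}=S_{e_{11}^r}S_{e_{11}^r}^*+\sum_{\alpha\in\mathcal{A}(r;(m_1,m_2))}S_\alpha S_\alpha^*.
\]
The key simplification I would exploit is that, again because $e_{22}$ is exitless, $S_{e_{22}^{t_2}}$ is a unitary on $P_{v_2}$, so that
\[
S_{\alpha_{t_1,t_2}}S_{\alpha_{t_1,t_2}}^*=S_{e_{11}^{t_1}}S_{e_{12}}S_{e_{12}}^*S_{e_{11}^{t_1}}^*,
\]
which depends only on $t_1$. Since $\gcd(m_2,r)=1$, the congruence \eqref{InFixedPA} assigns to each $t_1\in\{0,\dots,r-1\}$ a unique $t_2$, so $\mathcal{A}(r;(m_1,m_2))$ has exactly $r$ elements indexed by $t_1$ (matching the $r$ edges $g_i$). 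Summing over $t_1$ and telescoping against (CK3) at $v_1$, that is $P_{v_1}=S_{e_{11}}S_{e_{11}}^*+S_{e_{12}}S_{e_{12}}^*$, collapses the right-hand side to $P_{v_1}$. The universal property then yields a $*$-homomorphism $\phi$, which is manifestly surjective onto $C^*(\{S_{e_{ii}^r},S_{\alpha_{t_1,t_2}}\})$ since the generators hit the generators.

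The main obstacle is injectivity. Since $f_2$ is a vertex-simple loop without exit, the graph violates Condition (L) and the ordinary Cuntz--Krieger uniqueness theorem is unavailable. Instead I would apply the generalized uniqueness theorem \cite[Theorem 1.2]{ws:gckut}: the homomorphism $\phi$ is faithful exactly when each vertex projection has nonzero image and, for every exitless loop, the image of the corresponding partial isometry has full spectrum $\T$. The first condition is immediate from $\phi(P_{w_i})=P_{v_i}\neq 0$, and the second reduces to showing that the spectrum of $\phi(S_{f_2})=S_{e_{22}^r}$ is all of $\T$. As $S_{e_{22}}$ is a unitary generator of the corner associated to the exitless loop at $v_2$, its $r$-th power again has full spectrum; this is precisely the fact established at the end of the proof of \cite[Theorem 2.4]{akdpir:ckadg}, which I would cite rather than reprove. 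Finally, combining $\phi$ with the identification $C^*(L_3)^{\rho_{\underline{m}}^r}\cong C^*(L_3^{r;\underline{m}})$ of \cite{jhhws:qlsga}, whose construction sends the generating Cuntz--Krieger family of $L_3^{r;\underline{m}}$ to exactly the elements \eqref{generators}, shows that these elements generate the whole fixed point algebra.
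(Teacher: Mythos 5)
Your proposal is correct and follows essentially the same route as the paper: verifying the Cuntz--Krieger relations for $L_3^{r;(m_1,m_2)}$ with the only nontrivial check being (CK3) at $w_1$, then invoking the generalised uniqueness theorem of \cite{ws:gckut} together with the spectral fact from \cite[Theorem 2.4]{akdpir:ckadg} for injectivity, and finally citing \cite{jhhws:qlsga} for the generation statement. Your reduction of $S_{\alpha_{t_1,t_2}}S_{\alpha_{t_1,t_2}}^*$ to $S_{e_{11}^{t_1}}S_{e_{12}}S_{e_{12}}^*S_{e_{11}^{t_1}}^*$ via the exitless loop at $v_2$, followed by a clean telescope against $P_{v_1}=S_{e_{11}}S_{e_{11}}^*+S_{e_{12}}S_{e_{12}}^*$, is just a tidier organization of the paper's explicit iterated computation, not a different argument.
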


From Proposition \ref{PropGenerators3} we immediately see that for two sets of weights $\underline{m}$ and $\underline{n}$ we can construct an isomorphism from $C^*(L_3)^{\rho^r_{\underline{m}}}$ to $C^*(L_3)^{\rho^r_{\underline{n}}}$ as follows. First, to distinguish the two fixed point algebras we denote the vertices and edges in $L_3$ as in Figure \ref{quantum3sphere} when we consider the fixed point algebra under $\rho^r_{\underline{m}}$. For the fixed point algebra under $\rho^r_{\underline{n}}$ we denote the edges by $h_{ij}$, moreover we let $\beta_{k_1,k_2}:=h_{11}^{k_1}h_{12}h_{22}^{k_2}$ for $k_1,k_2\in\{0,\dots,r\}$ such that $n_1(k_1+1)+n_2k_2\equiv 0 \pmod{r}$. Then the following is an isomorphism from $C^*(L_3)^{\rho^r_{\underline{m}}}$ to $C^*(L_3)^{\rho^r_{\underline{n}}}$:
\begin{equation}\label{AnIsomorphism}
\begin{aligned}
S_{e_{11}^r}\mapsto S_{h_{11}^r},  \  S_{e_{22}^r}\mapsto S_{h_{22}^r}, \
S_{\alpha_{t_1,t_2}}\mapsto S_{\beta_{k_1,k_2}}
\end{aligned}
\end{equation}
Note that we have a freedom in the isomorphism to pair $S_{\alpha_{t_1,t_2}}$ with $S_{\beta_{k_1,k_2}}$ for any choice of $k_1,k_2$, but these isomorphisms would in many cases not preserve the gauge action inherited from $C^*(L_3)$. 

Since $\gamma_w(S_{\alpha_{t_1,t_2}})=w^{t_1+t_2+1}S_{\alpha_{t_1,t_2}}$ we see immediately that if $\newSStilde{(m_1,m_2)}$ equals $\newSStilde{(n_1,n_2)}$, then we can pair $S_{\alpha_{t_1,t_2}}$ with a $S_{\beta_{k_1,k_2}}$ such that $t_1+t_2+1=k_1+k_2+1$ and the gauge action is then preserved. We will in Proposition \ref{ExistenceGPisom} see that this condition is too restrictive, instead have to use the multiset $\newSS{\underline{m}}$.
\\

We will now describe $\newSS{\underline{m}}$ further, which will become crucial in the construction and existence of an equivariant isomorphism.  
\begin{proposition}\label{PropSetOfNumbers}
Let $r$ be a positive integer and $m_1,m_2$ be positive integers such that $\gcd(m_i,r)=1$ for $i=1,2$. Then we have 
\begin{equation}\label{SetOfNumbers}
 \begin{aligned}
 \newSS{(m_1,m_2)}=\left\{\gcd(m_2-m_1,r)k: k=1,2,\dots,\frac{r}{\gcd(m_2-m_1,r)} \right\}.
 \end{aligned}
 \end{equation}
\begin{proof}
For $t_1=0,\dots,r-1$ we have 
\begin{eqnarray*}
t_1+t_2+1&=&t_1-m_2^{-1}m_1(t_1+1)+1 \pmod{r} \\
&=&(t_1+1)(1-m_2^{-1}m_1) \pmod{r} \\
&=&c\gcd(1-m_2^{-1}m_1,r)\alpha \pmod{r}
\end{eqnarray*}
where $1-m_2^{-1}m_1=\gcd(1-m_2^{-1}m_1,r)\alpha$ for a unit $\alpha$ in $\Zr$ and $c=1,\dots,r$. For a number $t$ between $0$ and $r$ we have that $r=\gcd(1-m_2^{-1}m_1,r)t$. Hence $t\gcd(1-m_2^{-1}m_1,r)\alpha\equiv 0 \pmod{r}$ and we only have to consider the numbers for which $c\leq t$. 

Moreover, since $\alpha$ is a unit, the set of numbers
$$
\{c\gcd(1-m_2^{-1}m_1)
,r)\alpha \mod{r}: c=1,\dots,r\}
$$
is the same as the numbers $k\gcd(1-m_2^{-1}m_1),r)$, $ k=1,\dots,\frac{r}{\gcd(1-m_2^{-1}m_1,r)}$. Since $\gcd(1-m_2^{-1}m_1,r)=\gcd(m_1-m_2,r)$ we obtain \eqref{SetOfNumbers}. 
\end{proof}
\end{proposition}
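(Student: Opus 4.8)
The plan is to parametrise the elements of $\mathcal{A}(r;(m_1,m_2))$ by the single parameter $t_1$, to express the length $t_1+t_2+1$ modulo $r$ as an explicit linear function of $t_1$, and then to recognise the resulting set of residues as a cyclic subgroup of $\Zr$. Since $\gcd(m_2,r)=1$, the defining congruence $m_1(t_1+1)+m_2 t_2\equiv 0 \pmod r$ determines $t_2$ uniquely in $\{0,\dots,r-1\}$ for each choice of $t_1$, namely $t_2\equiv -m_2^{-1}m_1(t_1+1)\pmod r$. Substituting this into $t_1+t_2+1$ and reducing modulo $r$, I would obtain
$$
t_1+t_2+1\equiv (t_1+1)\bigl(1-m_2^{-1}m_1\bigr)\pmod r,
$$
so that every length in $\newSS{(m_1,m_2)}$ has the form $(t_1+1)u \bmod r$ with $u:=1-m_2^{-1}m_1$.

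Next I would let $t_1$ run over $\{0,\dots,r-1\}$ and observe that $t_1+1$ then runs over $\{1,\dots,r\}$, which is a complete residue system modulo $r$. Consequently the set of attained residues $\{(t_1+1)u \bmod r\}$ is precisely the image of the map $c\mapsto cu$ on $\Zr$, i.e.\ the cyclic subgroup $\langle u\rangle$ generated by $u$. By the standard description of cyclic subgroups of $\Zr$, this subgroup equals $\gcd(u,r)\,\Zr$, which is exactly $\{\gcd(u,r)\,k : k=1,\dots,r/\gcd(u,r)\}$ once one notes that the value $k=r/\gcd(u,r)$ returns the residue $0$.

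The remaining step, and the one requiring the most care, is the identification $\gcd(u,r)=\gcd(m_2-m_1,r)$. Here I would use that $u=m_2^{-1}(m_2-m_1)$ together with the fact that multiplication by the unit $m_2^{-1}$ does not change the gcd with $r$; concretely $\gcd(m_2^{-1}a,r)=\gcd(a,r)$ for any $a$, because $m_2^{-1}$ is invertible modulo $r$. This yields $\gcd(u,r)=\gcd(m_2-m_1,r)$ and matches the right-hand side of \eqref{SetOfNumbers}. I expect the main obstacle to be purely a matter of bookkeeping: verifying that the map $c\mapsto cu$ hits the entire subgroup $\gcd(u,r)\,\Zr$ and nothing more. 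In particular I would route the argument through the cyclic-subgroup description rather than through a factorisation $u=\gcd(u,r)\cdot\alpha$ with $\alpha$ a unit, since such a unit $\alpha$ need not exist literally modulo $r$ (only modulo $r/\gcd(u,r)$), and the subgroup formulation sidesteps this subtlety entirely.
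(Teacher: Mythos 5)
Your proposal is correct, and its core is the same as the paper's: parametrise $\mathcal{A}(r;(m_1,m_2))$ by $t_1$, solve for $t_2$ using that $m_2$ is a unit, and reduce the length to $(t_1+1)(1-m_2^{-1}m_1) \bmod r$, after which one identifies $\gcd(1-m_2^{-1}m_1,r)=\gcd(m_2-m_1,r)$ exactly as you do. The one point of divergence is the final bookkeeping step: the paper writes $1-m_2^{-1}m_1=\gcd(1-m_2^{-1}m_1,r)\,\alpha$ with $\alpha$ a unit in $\Zr$ and lets $c=t_1+1$ run over a complete residue system, whereas you route through the image of $c\mapsto cu$ being the cyclic subgroup $\langle u\rangle=\gcd(u,r)\,\Zr$. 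Both work, but your parenthetical justification for avoiding the paper's factorisation is not quite right: a unit $\alpha\in\Zru$ with $u\equiv \gcd(u,r)\,\alpha \pmod r$ does always exist. What can fail is only that the literal integer quotient $u/\gcd(u,r)$ is a unit modulo $r$ --- e.g.\ $8=4\cdot 2$ with $2$ not a unit modulo $12$ --- but one can always replace it by a unit lift, since $u/\gcd(u,r)$ is coprime to $r/\gcd(u,r)$ and the Chinese remainder theorem lets one adjust by multiples of $r/\gcd(u,r)$ to avoid the remaining prime divisors of $r$; here $8\equiv 4\cdot 5\pmod{12}$ with $5$ a unit. So the paper's step is legitimate, although it is asserted without proof; your cyclic-subgroup formulation has the merit of sidestepping the need for this small lifting argument entirely, and it makes transparent why each residue class in $\gcd(m_2-m_1,r)\,\Zr$ is attained (indeed, each is attained equally often as $t_1$ ranges over $\{0,\dots,r-1\}$, consistent with the multiset reading of $\newSS{(m_1,m_2)}$).
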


\begin{remark}
Proposition \ref{PropSetOfNumbers} shows moreover that if $|\newSS{(m_1,m_2)}|=|\newSS{(n_1,n_2)}|$ for two weight vectors $\underline{m}$ and $\underline{n}$ then $\newSS{(m_1,m_2)}=\newSS{(n_1,n_2)}$. Hence the multisets can be distinguished by only considering the number of elements in the sets and not the particular elements. Moreover, note that $|\newSS{(m_1,m_2)}|$ equals the number of periodic lines in level $2$ in the graphs (see e.g. Figure \ref{F8}) which by Proposition \ref{PropSetOfNumbers} is precisely $\frac{r}{\gcd(m_1-m_2,r)}$.
\end{remark}

\begin{example}
We obtain by Proposition \ref{PropSetOfNumbers} that $\newSS[5]{1,3}=\newSS[5]{2,3}=\{1,2,3,4\}$ and $\newSS[5]{2,2}=\{0\}$ which again indicates the similarity between $F_{5;(1,3)}$ and $F_{5;(2,3)}$ and differences with the graph $F_{5;(2,2)}$, as seen in Example \ref{exgraph2}. 
\end{example}

\begin{example}\label{ExGaugeIsomorphism}
Consider $r=5$ and the set of weights $\underline{m}=(1,3)$ and $\underline{n}=(2,3)$. From Figure \ref{F13} and \ref{F23} we can determine the value of $t_2$ corresponding to a given $t_1\in\{0,\dots,r-1\}$. The numbers are as follows: 
\begin{table}[H]
    \centering
    \begin{tabular}{c c|c c c c c}
    & $t_1$  & 0 & 1 & 2 & 3 & 4\\
    \hline \hline 
    $\underline{m}=(1,3)$ & $t_2$ & 3 & 1 & 4 & 2 & 0   \\  \hline 
     $\underline{n}=(2,3)$ & $t_2$ & 1 & 2 & 3 & 4 & 0 
    \end{tabular}
\end{table}
Consider the following map from $C^*(L_3)^{\rho^5_{(1,3)}}$ to $C^*(L_3)^{\rho^5_{(2,3)}}$: 
$$
\begin{aligned}
S_{e_{11}^5}&\mapsto S_{h_{11}^5}, \ S_{e_{22}^5}\mapsto S_{h_{22}^5}, \\
S_{\alpha_{0,3}}&\mapsto S_{\beta_{1,2}}, \ S_{\alpha_{1,1}}\mapsto S_{\beta_{3,4}}S_{h_{22}^5}^* \\
S_{\alpha_{2,4}}&\mapsto S_{\beta_{0,1}}S_{h_{22}^5}, \ S_{\alpha_{3,2}}\mapsto S_{\beta_{2,3}}, \\
S_{\alpha_{4,0}}&\mapsto S_{\beta_{4,0}}. 
\end{aligned}
$$
The map is an isomorphism since the right hand side satisfies the Cuntz-Krieger relations for $L_3^{r;\underline{m}}$ and it is clearly equivariant. The example illustrates why we wish to calculate the numbers $t_1+t_2+1$ modulo $r$ since we have the freedom to multiply by the elements $S_{h_{22}^5}$ and $S_{h_{22}^5}^*$ which correspond to changing the generators of $C^*(L_3)^{\rho^5_{(2,3)}}$. 
\end{example}

\begin{proposition}\label{ExistenceGPisom}
Let $\underline{m},\underline{n}\in\Nb^2$ be such that $\newSS{(m_1,m_2)}=\newSS{(n_1,n_2)}$. Let $S_\nu$ be a generator in $C^*(L_3)^{\rho_{\underline{m}}^r}$ and $S_\mu$ a generator of $C^*(L_3)^{\rho_{\underline{n}}^r}$ for which $s(\nu)=s(\mu), r(\nu)=r(\mu)$ and $|\nu|\equiv |\mu| \pmod{r}$. If $|\nu|=|\mu|+\ell r$ for $\ell \in\Nb$ let 
$$
S_{\nu}\mapsto S_{\mu}S_{e_{22}^{\ell r}}
$$
and if $|\nu|=|\mu|-\ell r$ for $\ell\in\Nb$ let 
$$S_{\nu}\mapsto S_{\mu}S_{e_{22}^{\ell r}}^{*}.$$
Then the above defines an equivariant isomorphism between $C^*(L_3)^{\rho_{\underline{m}}^r}$ and $C^*(L_3)^{\rho_{\underline{n}}^r}$. 
\begin{proof}
     The map is clearly a $*$-homomorphism since the domain and range satisfy the same defining relations which follows since $v_2$ is a sink. 
\end{proof}
\end{proposition}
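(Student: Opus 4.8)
The plan is to view both fixed point algebras as universal graph $C^*$-algebras through Proposition \ref{PropGenerators3} and to verify that the proposed assignment sends the canonical generators to elements satisfying the Cuntz--Krieger relations of $L_3^{r;\underline{m}}$. First I would observe that the assignment is total. Each of $\mathcal{A}(r;(m_1,m_2))$ and $\mathcal{A}(r;(n_1,n_2))$ has exactly $r$ connecting paths, one for every $t_1\in\{0,\dots,r-1\}$ (the value $t_2$ being forced by the congruence). Since $\newSS{(m_1,m_2)}=\newSS{(n_1,n_2)}$, Proposition \ref{PropSetOfNumbers} forces $\gcd(m_2-m_1,r)=\gcd(n_2-n_1,r)$, so each residue modulo $r$ is realised by the same number of connecting paths on both sides, and one may fix a bijection $\nu\leftrightarrow\mu$ pairing connecting generators with $|\nu|\equiv|\mu|\pmod r$, $s(\nu)=s(\mu)=v_1$ and $r(\nu)=r(\mu)=v_2$. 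I then set $S_\nu\mapsto S_\mu S_{h_{22}^{\ell r}}$ or $S_\mu S_{h_{22}^{\ell r}}^\ast$ according to the sign of $|\nu|-|\mu|=\pm\ell r$, together with $S_{e_{11}^r}\mapsto S_{h_{11}^r}$ and $S_{e_{22}^r}\mapsto S_{h_{22}^r}$.

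The crucial point is that the second vertex emits only the loop $h_{22}$ without exit, so (CK3) at that vertex gives $S_{h_{22}^r}S_{h_{22}^r}^\ast=P_{v_2}$; thus $S_{h_{22}^r}$, and hence $U:=S_{h_{22}^{\ell r}}$ and $U^\ast$, restrict to unitaries on the corner $P_{v_2}C^*(L_3)^{\rho_{\underline{n}}^r}P_{v_2}$, i.e.\ $UU^\ast=U^\ast U=P_{v_2}$. Denoting by $T_\nu$ the proposed image of a connecting generator, this yields $T_\nu^\ast T_\nu=P_{v_2}=P_{r(\nu)}$ and, more importantly,
\[
T_\nu T_\nu^\ast=S_\mu\,UU^\ast\,S_\mu^\ast=S_\mu P_{v_2}S_\mu^\ast=S_\mu S_\mu^\ast ,
\]
since $S_\mu P_{v_2}=S_\mu$. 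Hence the correcting powers of the loop leave the range projection untouched, and the decisive relation (CK3) at $w_1$ transfers from the target algebra:
\[
T_{f_1}T_{f_1}^\ast+\sum_i T_{g_i}T_{g_i}^\ast=S_{h_{11}^r}S_{h_{11}^r}^\ast+\sum_\mu S_\mu S_\mu^\ast=P_{v_1},
\]
the last equality being (CK3) for $C^*(L_3)^{\rho_{\underline{n}}^r}$. The relations (CK1), (CK2), the vertex relation at $w_2$, and the mutual orthogonality of the ranges (inherited from the distinct values of $k_1$ among the $\beta$'s) follow in the same way, so by universality the assignment extends to a $*$-homomorphism $\phi$.

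It remains to see that $\phi$ is an equivariant isomorphism. Surjectivity is immediate, as $S_{h_{11}^r}$, $S_{h_{22}^r}$ and every $S_\mu=T_\nu U^{\mp1}$ lie in the image. For injectivity I would run the generalized Cuntz--Krieger uniqueness theorem exactly as in Proposition \ref{PropGenerators3} (checking $\phi(P_{w_i})\neq0$ and that the spectrum of $\phi(S_{f_2})=S_{h_{22}^r}$ is all of $\mathbb{T}$), or, more symmetrically, build the reverse map from the inverse bijection and check that the loop corrections telescope so that the two composites fix every generator. Equivariance is then automatic from the degree bookkeeping: $\gamma_w(T_\nu)=w^{|\mu|\pm\ell r}T_\nu=w^{|\nu|}T_\nu=\phi(\gamma_w(S_\nu))$. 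The only step that genuinely uses the hypotheses is the display $T_\nu T_\nu^\ast=S_\mu S_\mu^\ast$: it is precisely the loop-without-exit structure at the second vertex (the ``$v_2$ is a sink'' remark) that lets the correcting powers disappear from the summation relation, while the multiset hypothesis is exactly what makes the degree-matched bijection available. I expect this to be the main obstacle, everything else being formal.
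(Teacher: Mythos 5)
Your proposal is correct and follows the same route the paper takes: the published proof is the one-line observation that the defining relations are preserved because the loop at the second vertex has no exit, and your argument is precisely that observation carried out in detail, with the key display $T_\nu T_\nu^\ast=S_\mu\,UU^\ast\,S_\mu^\ast=S_\mu S_\mu^\ast$ (valid since $S_{h_{22}}S_{h_{22}}^\ast=P_{w_2}$ by (CK3) at the exitless loop) explaining why the correcting powers of $S_{h_{22}^{r}}$ do not disturb the summation relation at $w_1$, while injectivity and equivariance are handled exactly as in Proposition \ref{PropGenerators3}. In short, you have supplied the verification the paper compresses into ``the domain and range satisfy the same defining relations since $v_2$ is a sink,'' and your use of Proposition \ref{PropSetOfNumbers} to produce the degree-matched bijection of generators matches the paper's intent.
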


\begin{theorem}\label{mainresult3}
Let $r\in\Nb$ and consider $m_i,n_i\in\Zru$ for $i=1,2$. Then the following are equivalent: 
\begin{enumerate}[(i)]
\item $(\QLS[3],\gamma)\simeq (\QLSalt[3],\gamma)$
\item $\gcd(m_2-m_1,r)=\gcd(n_2-n_1,r)$
\item $\newSS{\mm}=\newSS{\nn}$ (See notation \ref{WreducedTheSame})
\end{enumerate}
\end{theorem}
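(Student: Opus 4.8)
The plan is to prove the equivalence (ii)$\Leftrightarrow$(iii) directly and then close the cycle through (iii)$\Rightarrow$(i)$\Rightarrow$(ii), after which all three conditions are equivalent. I would dispatch (ii)$\Leftrightarrow$(iii) first, since it is the most elementary. By Proposition \ref{PropSetOfNumbers} we have $\newSS{(m_1,m_2)}=\{gk : k=1,\dots,r/g\}$ with $g=\gcd(m_2-m_1,r)$, so this set is completely determined by $g$. Hence (ii) forces the two sets to coincide; conversely, equal sets have equal cardinality $r/g$, and by the remark following Proposition \ref{PropSetOfNumbers} this already yields $\gcd(m_2-m_1,r)=\gcd(n_2-n_1,r)$, i.e. (ii).

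For (iii)$\Rightarrow$(i) the essential content is packaged into Proposition \ref{ExistenceGPisom}, so the task reduces to checking that its hypotheses can be met for a \emph{complete} matching of the generators furnished by Proposition \ref{PropGenerators3}. Every path generator $S_{\alpha_{t_1,t_2}}$ of $C^*(L_3)^{\rho_{\underline{m}}^r}$ has source $v_1$ and range $v_2$, and the same holds for the generators $S_{\beta_{k_1,k_2}}$ of $C^*(L_3)^{\rho_{\underline{n}}^r}$; together with the trivial assignments $S_{e_{11}^r}\mapsto S_{h_{11}^r}$ and $S_{e_{22}^r}\mapsto S_{h_{22}^r}$, the only remaining requirement is a bijection of the two path families preserving length modulo $r$. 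For this I would upgrade the set-equality in (iii) to equality of the full length-mod-$r$ \emph{multisets}: as $t_1$ ranges over $\{0,\dots,r-1\}$, the quantity $(t_1+1)(1-m_2^{-1}m_1)\bmod r$ realises each element of $\newSS{(m_1,m_2)}$ exactly $g$ times, because $s\mapsto s(1-m_2^{-1}m_1)$ is an endomorphism of $\Zr$ whose kernel has size $g=\gcd(m_2-m_1,r)$. Since $g$ agrees for $\underline{m}$ and $\underline{n}$ by (ii)$\Leftrightarrow$(iii), the two multisets coincide, a length-preserving bijection of generators exists, and Proposition \ref{ExistenceGPisom} then delivers the equivariant isomorphism.

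Finally, for (i)$\Rightarrow$(ii) I would invoke the ideal-lattice invariant developed in Section 2. An equivariant isomorphism $\QLS[3]\to\QLSalt[3]$ restricts to an isomorphism of fixed point algebras $\QLS[3]^{\gamma}\cong\QLSalt[3]^{\gamma}$, so in particular their ideal lattices are isomorphic. By the equivalence of conditions (i) and (v) in Theorem \ref{Thm:IdealLattice}, specialised to $k=1$, this says exactly that $\gcd(m_2-m_1,r)=\gcd(n_2-n_1,r)$, which is (ii), closing the cycle. I expect the only genuinely delicate point to be the multiplicity bookkeeping in (iii)$\Rightarrow$(i): one must verify that equality of the underlying sets $\newSS{-}$ upgrades to equality of the length-mod-$r$ multisets, so that the generator matching required by Proposition \ref{ExistenceGPisom} can be arranged bijectively. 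Everything else is an assembly of Proposition \ref{PropSetOfNumbers}, Proposition \ref{ExistenceGPisom}, and Theorem \ref{Thm:IdealLattice}.
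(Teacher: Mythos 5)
Your proposal is correct and follows essentially the same route as the paper: (i)$\Rightarrow$(ii) via Theorem \ref{Thm:IdealLattice}, (ii)$\Leftrightarrow$(iii) via Proposition \ref{PropSetOfNumbers}, and (iii)$\Rightarrow$(i) via Proposition \ref{ExistenceGPisom}. Your explicit multiplicity bookkeeping (each element of $\newSS{(m_1,m_2)}$ occurring exactly $g=\gcd(m_2-m_1,r)$ times, since $s\mapsto s(1-m_2^{-1}m_1)$ has kernel of size $g$) is a correct elaboration of a detail the paper leaves implicit in the remark following Proposition \ref{PropSetOfNumbers}.
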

\begin{proof} 
We have already noted, after Theorem \ref{Thm:IdealLattice}, that $(i)$ implies $(ii)$. That $(ii)$ is equivalent to $(iii)$ follows directly by Proposition \ref{PropSetOfNumbers}, and that   ($iii)$ implies $(i)$ follows by Proposition \ref{ExistenceGPisom}. 
\end{proof}

\begin{corollary}
When $k=1$, the invariants for equivariant isomorphisms of \QLS\ listed in Theorem \ref{Thm:IdealLattice} are complete.
\end{corollary}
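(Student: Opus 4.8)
The plan is to observe that for $k=1$ the situation is completely resolved by Theorem \ref{mainresult3}, so that the completeness of the invariants in Theorem \ref{Thm:IdealLattice} reduces to a direct matching of conditions. Recall that the dimension is $d=2k+1$, so $k=1$ corresponds precisely to dimension $3$, which is exactly the setting of Theorem \ref{mainresult3}; hence the two sets of quantum lens spaces under discussion coincide.

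First I would note that the five conditions (i)--(v) of Theorem \ref{Thm:IdealLattice} are mutually equivalent, so it suffices to track any single one of them, and condition (v) is the most transparent. When $k=1$ the index $i$ in (v) ranges only over $i=1$, so the condition collapses to the single equality $\gcd(m_2-m_1,r)=\gcd(n_2-n_1,r)$.

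This is verbatim condition (ii) of Theorem \ref{mainresult3}, which that theorem shows to be equivalent to its condition (i), namely $(\QLS[3],\gamma)\simeq(\QLSalt[3],\gamma)$. Combining this with the remark following Theorem \ref{Thm:IdealLattice}, which already records that the quantities in (i)--(v) are invariants of equivariant isomorphism (the necessity direction), we conclude that agreement of these invariants holds if and only if the two quantum lens spaces are equivariantly isomorphic. This is precisely the assertion that the invariants are complete.

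I do not expect any genuine obstacle here: all the real content sits in Theorem \ref{mainresult3} (whose proof in turn rests on Propositions \ref{PropSetOfNumbers} and \ref{ExistenceGPisom}), and the present corollary merely records that at $k=1$ the ideal-theoretic invariant $\gcd(m_2-m_1,r)$ already pins down the full equivariant isomorphism class, in contrast to the higher-dimensional cases where this fails. The only point requiring care is the bookkeeping of the dimension convention $d=2k+1$, so as to be sure that $k=1$ indeed lands in the dimension-$3$ case governed by Theorem \ref{mainresult3}.
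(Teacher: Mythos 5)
Your proposal is correct and matches the paper's intent exactly: the paper states this corollary without proof, treating it as an immediate consequence of Theorem \ref{mainresult3}, since for $k=1$ condition (v) of Theorem \ref{Thm:IdealLattice} reduces to $\gcd(m_2-m_1,r)=\gcd(n_2-n_1,r)$, which Theorem \ref{mainresult3} shows equivalent to equivariant isomorphism. Your bookkeeping of the convention $d=2k+1$ and your appeal to the remark after Theorem \ref{Thm:IdealLattice} for the necessity direction are precisely the (implicit) argument the authors intend.
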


\section{Dimension quadruples}

This far, we have seen that the ideal lattice of the fixed point algebras provide us with a useful and easily computable invariant, which is complete when $k=1$ or when $r\in\{3,4,6,12\}$. However, we shall see that it is not complete in general, and even though -- as we have already seen and used -- the multisets $\newSS{\mm}$ are useful tools for further analysis, we do not know how to produce complete invariants from them in general.

Hence we now introduce the \emph{dimension quadruples} which are, conjecturally (cf. \cite{rh:ggclpa}, \cite{seer:rmsigc}), complete invariants for equivariant isomorphism of graph $C^*$-algebras equipped with their canonical gauge action. We follow \cite{seer:rmsigc} but need to adjust the definitions to the fact that the quantum lens spaces are equivariantly given not as graph $C^*$-algebras but as corners of them.
\\

The starting point for this approach is to note that since $E\times_1\ZZ$ has no cycles, $C^*(E\times_1\ZZ)$ is an AF algebra, and consequently  $K_0(C^*(E\times_1\ZZ))$ with its canonical order structure is a dimension group. We also see that sending each vertex and edge of $E\times_1\ZZ$ one step to the right defines an automorphism $\lt:C^*(E\times_1\ZZ)\to C^*(E\times_1\ZZ)$ - a right translation - which induces an automorphism of $K_0(C^*(E\times_1\ZZ))$. For any positive element $x$ of an ordered group $G$, $I(x)$ denotes its \emph{order ideal}, the set
\[
I(x)=\{y\in G :\exists n: 0\leq y\leq nx\}.
\]
\begin{definition}
When $E$ is a graph with finitely many vertices, the \emph{dimension triple} is defined as
\[
\DT(E)=(K_0(C^*(E\times_1\ZZ)),K_0(C^*(E\times_1\ZZ))_+,\lt_*)
\]
Let $p=\sum_{v\in V}p_v\in C^*(E)$ be an orthogonal  sum of vertex projections. We define
\[
p_0^E:= \sum_{v\in V}p_{(0,v)}\in C^*(E\times _1\ZZ)
\]
and two kinds of \emph{dimension quadruples} of $(E,p)$ as
\[
\DQp(E,p)=(\DT(E),[p_0^E])
\]
and
\[
\DQ(E,p)=(\DT(E),I([p_0^E])),
\]
respectively
\end{definition}

\begin{theorem}\label{Thm:DQ0}
When $(pC^*(E)p,\gamma)\simeq (qC^*(F)q,\gamma)$, with $p$ and $q$ {sums of vertex projections that are full}, we have
\[
\DQp(E,p)\simeq \DQp(F,q).
\]
When $(pC^*(E)p \otimes\KKK,\gamma\otimes\id)\simeq (qC^*(F)q \otimes\KKK,\gamma\otimes\id)$, we have 
\[
\DQ(E,p)\simeq \DQ(F,q).
\]
\end{theorem}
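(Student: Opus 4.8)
The plan is to pass to crossed products by the gauge circle action and to read off both $\DT$ and the distinguished datum as $K$-theory of the resulting AF algebras equipped with their dual $\ZZ$-action. The guiding principle is that the assignment $(A,\gamma)\mapsto(A\rtimes_\gamma\T,\hat\gamma)$ is functorial: an equivariant isomorphism $\Phi\colon(pC^*(E)p,\gamma)\to(qC^*(F)q,\gamma)$ induces a $\ZZ$-equivariant isomorphism $\Phi\rtimes\T$ of the crossed products intertwining the dual actions, and an equivariant stable isomorphism does the same after tensoring with $\KKK$, since $(A\otimes\KKK)\rtimes_{\gamma\otimes\id}\T\cong(A\rtimes_\gamma\T)\otimes\KKK$. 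It therefore suffices to express $\DT$ and the class (resp.\ order ideal) intrinsically through the crossed product of the corner together with its dual action.

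First I would record the two identifications that make this computable. Because $p=\sum_{v\in V}p_v$ is gauge-invariant, forming the corner commutes with forming the crossed product, so $(pC^*(E)p)\rtimes_\gamma\T\cong P^V C^*(E\times_1\ZZ)P^V$, where $P^V:=\sum_{n\in\ZZ}\lt^n(p_0^E)$ is the multiplier projection onto all translates of $p_0^E$; under the standard isomorphism $C^*(E)\rtimes_\gamma\T\cong C^*(E\times_1\ZZ)$ the dual action $\hat\gamma$ becomes the right translation $\lt$, which fixes $P^V$. As $p$ is full, $P^V$ is full, so the corner is a Morita equivalence and induces an order isomorphism $K_0(P^V C^*(E\times_1\ZZ)P^V)\cong K_0(C^*(E\times_1\ZZ))$ commuting with $\lt_*$ (by \cite{lgb:sihsc}). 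This recovers the entire triple $\DT(E)$ from $(pC^*(E)p,\gamma)$, and shows that $\Phi\rtimes\T$ yields an isomorphism $\DT(E)\cong\DT(F)$ respecting order and $\lt_*$. Note it is fullness of $p$ in $C^*(E)$, not of $p_0^E$ in $C^*(E\times_1\ZZ)$ (which typically fails), that is used here.

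Next I would locate the distinguished datum. Restricting the equivariant $\Phi$ to fixed-point algebras gives a unital isomorphism $(pC^*(E)p)^\gamma\cong(qC^*(F)q)^\gamma$; by Theorem \ref{isowF} (i.e.\ Crisp's description, \cite{tc:cga}) these are identified with the corners $p_0^E C^*(E\times_1\ZZ)p_0^E$ and $q_0^F C^*(F\times_1\ZZ)q_0^F$, whose units are exactly $p_0^E$ and $q_0^F$. Since $p_0^E\leq P^V$ and $\lt$ fixes $P^V$, the inclusion of the fixed-point algebra into the crossed product matches the corner inclusion $p_0^E C^*(E\times_1\ZZ)p_0^E\hookrightarrow P^V C^*(E\times_1\ZZ)P^V$, so the class of $p_0^E$ computed in $K_0$ of the fixed-point algebra agrees with $[p_0^E]\in\DT(E)$. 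As $\Phi$ is unital on the corners it sends the unit of $(pC^*(E)p)^\gamma$ to that of $(qC^*(F)q)^\gamma$, whence the induced isomorphism $\DT(E)\cong\DT(F)$ carries $[p_0^E]$ to $[q_0^F]$. This proves $\DQp(E,p)\simeq\DQp(F,q)$.

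For the stable statement I would run the identical argument after $\otimes\KKK$, using that tensoring with $\KKK$ commutes with both the crossed product and the passage to fixed points, and that a stable equivariant isomorphism no longer preserves the unit but does preserve the ideal generated by the fixed-point algebra. Concretely, $K_0((pC^*(E)p)^\gamma)$ is the order ideal $I([p_0^E])\subseteq\DT(E)$, and $\Phi^\gamma\otimes\id$ carries it isomorphically onto $I([q_0^F])\subseteq\DT(F)$ compatibly with the $\DT$-isomorphism, giving $\DQ(E,p)\simeq\DQ(F,q)$. The main obstacle I anticipate is the bookkeeping in the previous two paragraphs: verifying that the dual action is genuinely $\lt$, that the Crisp identification for the corner is compatible with the inclusion $p_0^E\leq P^V$ so that $[p_0^E]$ sits correctly inside the full triple, and that the two induced $K_0$-isomorphisms (from the crossed product and from the fixed-point algebra) agree on the subgroup $I([p_0^E])$.
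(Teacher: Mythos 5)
Your proof is correct and takes essentially the same route as the paper: its two-line proof invokes \cite[Lemma 3.2]{seeras:agciir} for precisely your Morita-equivalence step (full corners have the same dimension triple as the whole graph algebra, via the crossed-product/skew-product identification under which the dual action becomes $\lt$), and then argues ``as in \cite{seer:rmsigc}'' for your tracking of $[p_0^E]$, respectively the order ideal $I([p_0^E])$, through the fixed-point algebra. You have simply supplied in detail the arguments the paper delegates to those references.
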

\begin{proof}
We can apply \cite[Lemma 3.2]{seeras:agciir} to see that the dimension triples of the full corners agree with the dimension triples of the whole graph $C^*$-algebras. Arguing from here as in 
\cite{seer:rmsigc} we get that the dimension quadruples agree.
\end{proof}

We abbreviate $\DT(r;\mm):=\DT(\ourE)$,  $\DQp(r;\mm):=\DQp(\ourE,p)$, $\DQ(r;\mm):=\DQ(\ourE,p)$.

\newcommand{\ee}{\underline{e}}
\newcommand{\xx}{\underline{x}}

The maps/matrices $\sh,\perm_\alpha:\ZZ^r\to \ZZ^r$ ($\alpha\in\Zru$) are given by
\[
\sh\ee_i=\ee_{i-1}\qquad \perm_\alpha\ee_i=\ee_{\alpha i}
\]
Note that $\sh\perm_\alpha=\perm_\alpha\sh^\alpha$.

\begin{lemma}\mbox{}\label{describeDQ}
\begin{enumerate}[(i)]
\item The map $[p_{((i,j),0)}]\mapsto \ee_{(i,j)}$ induces an isomorphism $K_0(C^*(\ourE \times_1\ZZ))\simeq \Z^{r(k+1)}$.
\end{enumerate}
Under the isomorphism in (i),
\begin{enumerate}[(i)]\addtocounter{enumi}{1}
\item  $K_0(C^*(\ourE \times_1\ZZ))_+$ is taken to 
\[
\left\{\xx\mid  \forall i,j: x_{(i,j)}\geq 0\vee(\exists i',j': x_{(i',j')}>0 \wedge (i',j')\myto (i,j)) \right\}
\]
\item $\lt_*$ is taken to 
\[
\myA[r;\mm]:=\begin{bmatrix}
\sh^{m_1}&&&&\\
-\sh^{m_2}&\sh^{m_2}&&&\\
-\sh^{m_3}&-\sh^{m_3}&\sh^{m_3}&&\\
\vdots&&&\ddots\\
-\sh^{m_{k+1}}&-\sh^{m_{k+1}}&-\sh^{m_{k+1}}&\dots&\sh^{m_{k+1}}
\end{bmatrix}
\]
\item $[P_0]$ is taken to $\sum_{i=1}^{k+1}\ee_{(i,0)}$.
\item $I([P_0])$ is taken to the image of $K_0(C^*(\ourE \times_1\ZZ))_+$ intersected with
\[
\left\{\xx\mid  \forall 0<j<r: x_{(0,j)}=0 \right\}
\]
\end{enumerate}
\end{lemma}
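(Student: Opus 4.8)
The plan is to read off the dimension triple from the AF structure of $C^*(\ourE\times_1\ZZ)$. Since $\ourE\times_1\ZZ$ has no cycles, this algebra is AF and, by standard graph $K$-theory (cf. \cite{njfmlir:cig}), $K_0$ is generated by the vertex classes $[p_{((v_i,j),n)}]$ subject only to the relations coming from (CK3). As $v_i$ emits exactly the edges $e_{ii'}$ with $i'\geq i$ and the source/range convention of $\times_1\ZZ$ puts the range one level to the right, these read
\[
[p_{((v_i,j),n)}]=\sum_{i'\geq i}[p_{((v_{i'},\,j+m_i),n+1)}].
\]
Viewing this as a stationary Bratteli diagram with one copy of $\ZZ^{r(k+1)}$ per level, the connecting matrix $\Lambda$ is block lower-triangular in the first index with diagonal blocks $\sh^{-m_i}$, which are permutation matrices because $\gcd(m_i,r)=1$. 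Hence $\det\Lambda=\pm1$, so $\Lambda\in GL_{r(k+1)}(\ZZ)$ and the inductive limit collapses onto a single copy of $\ZZ^{r(k+1)}$ with the level-$0$ classes $[p_{((v_i,j),0)}]$ as a free basis. This is (i), and (iv) is then immediate since $[P_0]=\sum_i[p_{((v_i,0),0)}]=\sum_i\ee_{(i,0)}$.

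For (iii) I would compute $\lt_*$ on this basis. As $\lt$ moves every vertex one level to the right, $\lt_*[p_{((v_i,j),0)}]=[p_{((v_i,j),1)}]$, so the task is to re-express the level-$1$ classes in the level-$0$ basis; equivalently $\lt_*=\Lambda^{-1}$. Back-substituting in the displayed recursion from $i=k+1$ upward — where at the bottom it gives $\lt_*\ee_{(k+1,j)}=\sh^{m_{k+1}}\ee_{(k+1,j)}$ and each higher row is solved through the already-determined lower rows — produces the matrix $\myA[r;\mm]$. The bookkeeping is routine using $\sh^{m}\ee_j=\ee_{j-m}$ and $\sh\perm_\alpha=\perm_\alpha\sh^\alpha$, so I would record the outcome rather than belabour it.

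The substantive step is the order, (ii). Here I would use the description of the positive cone of an AF dimension group as $K_0(C^*(\ourE\times_1\ZZ))_+=\bigcup_{N\geq0}\lt_*^{N}(\ZZ_{\geq0}^{r(k+1)})$, reflecting that any projection can be pushed to a common high level via (CK3) with nonnegative coefficients; equivalently, $\xx$ is positive iff $\Lambda^{N}\xx\geq 0$ coordinatewise for some $N$. Because each $\sh^{m_i}$ has order $r$, taking $N$ divisible by $r$ trivializes the level-preserving part of $\Lambda^{N}$, so that $\Lambda^{N}=\mathrm{id}+L$ with $L\geq 0$ supported on strict relations $(i',j')\myto(i,j)$ with $i'<i$ and positive multiplicity (for large $N$), by Lemma \ref{interpretmyo}. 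One then runs the stratified induction supplied by Lemma \ref{simplehasse}: the top level has no predecessors, so positivity forces the level-$1$ coordinates to be genuinely nonnegative, and descending one level at a time, a negative entry at $(i,j)$ can be absorbed by $L$ precisely when some strictly positive entry sits at a $\myo$/$\myto$-predecessor. This yields the stated equivalence. I expect this equivalence to be the main obstacle, as it requires simultaneous control of the sign pattern of $\Lambda^{\pm N}$ at every coordinate; the reduction to $N\equiv 0\ (\mathrm{mod}\ r)$ and the level-by-level stratification are what make it manageable.

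Finally, for (v) I would identify $I([P_0])$ with the positive cone of $K_0$ of the ideal of $C^*(\ourE\times_1\ZZ)$ generated by $P_0$. By Theorem \ref{isowF} this ideal is (stably) $C^*(\ourF)$, and by the correspondence between ideals and hereditary saturated sets exploited in the proof of Theorem \ref{Thm:IdealLattice}, it is carried to the positive classes supported on the hereditary closure $(\ourF)^0$ of the lines $(i,0)$. It then remains to verify that intersecting the cone of (ii) with the displayed linear conditions cuts out exactly this support; I would check this by the same $\myo$/$\myto$-predecessor analysis as in (ii), using that at the top level no line other than $(i,0)$ itself lies above the generators, which forces the off-origin coordinates there to vanish and, together with positivity, pins down the remaining coordinates.
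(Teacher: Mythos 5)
Your proposal is correct and follows essentially the same route as the paper: there, too, the recursion $[p_{((v_i,j),\ell)}]=\sum_{i'\geq i}[p_{((v_{i'},j+m_i),\ell+1)}]$ is derived from (CK3) and solved one level at a time starting from $i=k+1$, yielding the level-$0$ classes as a free basis and the matrix $\myA[r;\mm]$ from $\lt_*[p_{((v_i,j),0)}]=[p_{((v_i,j),1)}]$, with the order and order-ideal claims referred to the standard theory of ordered $K_0$ for such AF algebras. If anything, your sketches of (ii) and (v) --- the cone as $\bigcup_{N\geq 0}\lt_*^{N}\bigl(\ZZ_{\geq0}^{r(k+1)}\bigr)$, the reduction to $N\equiv 0 \pmod{r}$, and the stratified descent via Lemmas \ref{interpretmyo} and \ref{simplehasse} --- supply more detail than the paper's own proof, which declares these remaining claims clear.
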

\begin{proof}
It is standard to compute the ordered $K_0$-group, cf. \cite{mt:okgc} and/or \cite{rh:ggclpa}. We get from the structure of $\ourE\times_1\ZZ$ combined with the Cuntz-Krieger relations that
\[
[p_{((v_i,j),\ell)}]=\sum_{i'\geq i}[p_{(v_{i'},j+m_i),\ell+1)}].
\]
We have in particular that $[p_{((v_{k+1},j),\ell)}]=[p_{((v_{k+1},j+m_{k+1}),\ell+1)}]$, and starting from here and solving one level at a time  allows us to specify how the $[p_{((v_i,j),0)}]$ generate all the $K$-classes $[p_{((v_i,j),\ell)}]$ for $\ell\geq 1$. In particular, since $\operatorname{rt}_*([p_{(v_i,j),0)}])=[p_{(v_i,j),1)}]$, we can compute $\myA[r;\mm]$ this way. The remaining claims are clear.
\end{proof}

%
%
%
%

\begin{proposition}\label{charH}
Suppose an isomorphism $\eta:\DT(r;\mm)\to \DT(r;\nn)$ is induced by $H\in \matrM_{r(k+1)}(\ZZ)$. With
\[
\PP:=\PP(r;\mm)=\PP(r;\nn)\qquad \PP_0:=\PP_0(r;\mm)=\PP_0(r;\nn)
\]
we get that 
\[
H=\begin{bmatrix}
\sh^{\ell_1}\perm_{m_1/n_1}&&&&\\
Y_{21}&\sh^{\ell_2}\perm_{m_2/n_2}&&&\\
Y_{31}&Y_{32}&\sh^{\ell_3}\perm_{m_3/n_3}&&&\\
\vdots&\vdots&&\ddots\\
Y_{{k+1},1}&Y_{{k+1},2}&\dots&& \sh^{\ell_{k+1}}\perm_{m_{k+1}/n_{k+1}}
\end{bmatrix}
\]
for all $1\leq c\leq k+1$, where
\[
Y_{ab}(i,j) =0
\]
whenever $1\leq b<a\leq k+1$ and $(b,j)\not\myto (a,i)$ in $\PP$.

We have that such an $H$ induces an isomorphism $\DQ(r;\mm)\to \DQ(r;\nn)$ if and only if $\ell_1=0$, and that it induces an isomorphism $\DQp(r;\mm)\to \DQp(r;\nn)$
whenever
\begin{equation}\label{H1eq}
H \sum_{i=1}^{k+1}\ee_{(i,0)}= \sum_{i=1}^{k+1}\ee_{(i,0)}
\end{equation}
\end{proposition}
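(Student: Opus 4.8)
The plan is to translate the hypothesis into three assertions about the matrix $H$: that it is invertible over $\ZZ$, that it carries the positive cone of Lemma \ref{describeDQ}(ii) for $\mm$ onto the one for $\nn$, and that it intertwines the translations, $H\myA[r;\mm]=\myA[r;\nn]H$. From the order-isomorphism alone $\eta$ induces an isomorphism of the lattices of order ideals, which (reasoning as in the proof of Theorem \ref{Thm:IdealLattice}) are exactly the lattices of $\myto$-down-sets of $\PP(r;\mm)$ and $\PP(r;\nn)$; this is what forces $\PP(r;\mm)=\PP(r;\nn)=\PP$. I would then establish the three structural features of $H$ in turn — block lower-triangularity, the form of the diagonal blocks, and the support condition on the $Y_{ab}$ — and finally read off the two quadruple statements from the distinguished data in $\DQp$ and $\DQ$.

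For block lower-triangularity I would use that $\PP$ is graded by the level $i$: by Lemma \ref{simplehasse} every cover in $\PP$ joins consecutive levels, and every non-extremal element has neighbours in the adjacent levels, so the level is the rank function and is preserved by any automorphism of the ideal lattice. Consequently $\eta$ preserves the canonical order ideals $J_i$ generated by all vertices in levels $\geq i$, whence $H$ maps $\bigoplus_{a\geq i}\ZZ^r$ into itself for every $i$; taking $i=b$ gives $H_{ab}=0$ for $a<b$. For the diagonal blocks I would pass to the quotient $J_b/J_{b+1}$. Level $b$ is an antichain in $\PP$, so this quotient is $(\ZZ^r,\NN^r)$ with its standard order; $\eta$ induces an order-automorphism of it, which is necessarily a coordinate permutation matrix $P$, while $\lt_*$ induces $\sh^{m_b}$ (resp.\ $\sh^{n_b}$) because the off-diagonal entries of $\myA[r;\mm]$ land in $J_{b+1}$. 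The intertwining then reads $P\sh^{m_b}=\sh^{n_b}P$, and a direct computation with $\sh\perm_\alpha=\perm_\alpha\sh^\alpha$ identifies $P=\sh^{\ell_b}\perm_{m_b/n_b}$ for some $\ell_b$.

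The main obstacle is the support condition on the off-diagonal blocks. Here I would exploit that $\eta$ sends principal order ideals to principal order ideals: for a generator $\ee_{(b,j)}$ the vector $\eta(\ee_{(b,j)})=H\ee_{(b,j)}$ is positive and, by the previous step, its level-$b$ component is the single basis vector $\ee_{(b,\sigma_b(j))}$. By Lemma \ref{describeDQ}(ii) the principal ideal of such a positive vector is supported exactly on the down-set $\{(a,i):(b,\sigma_b(j))\myto(a,i)\}$, so $\eta(\ee_{(b,j)})$ itself is supported there, forcing $Y_{ab}(i,j)=0$ once $(b,\sigma_b(j))\not\myto(a,i)$. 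The delicate point is to convert this into the stated condition phrased with the domain index $(b,j)$: one must check that the family of diagonal permutations $\sigma_b$ is consistent with the single identification $\PP(r;\mm)=\PP(r;\nn)$ across all levels, so that $(b,\sigma_b(j))\myto(a,i)$ is equivalent to $(b,j)\myto(a,i)$. This bookkeeping of the index shifts against $\myto$ is where the real care is needed, and I would organise it by working downward from the maximal (level $k+1$) elements, where $\sigma_{k+1}$ is pinned by the intertwining, propagating compatibility through the covering relations supplied by Lemma \ref{simplehasse}.

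Finally, the two quadruple statements follow from the distinguished data. An isomorphism of $\DQp$ is exactly an isomorphism of $\DT$ carrying the class $[P_0]=\sum_i\ee_{(i,0)}$ (Lemma \ref{describeDQ}(iv)) to its counterpart, which is precisely the requirement \eqref{H1eq}; hence \eqref{H1eq} guarantees a $\DQp$-isomorphism. For $\DQ$ one instead needs $H$ to preserve the order ideal $I([P_0])$, which by Lemma \ref{describeDQ}(v) is characterised inside the positive cone by having vanishing first-level coordinates away from $j=0$. Since $H$ is block lower-triangular, this is governed entirely by the top-left block $\sh^{\ell_1}\perm_{m_1/n_1}$, and as $\perm_{m_1/n_1}$ fixes the index $0$ while $\sh^{\ell_1}$ shifts it by $\ell_1$, the ideal $I([P_0])$ is preserved if and only if $\ell_1=0$, as claimed.
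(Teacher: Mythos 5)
Your first two steps (block lower-triangularity and the identification of the diagonal blocks as $\sh^{\ell_c}\perm_{m_c/n_c}$), and your reading-off of the two quadruple statements from Lemma \ref{describeDQ}(iv),(v), are correct and amount to a more detailed version of the paper's own proof, which deduces these directly from the order description in Lemma \ref{describeDQ}(ii) together with the intertwining $Y_{cc}\sh^{m_c}=\sh^{n_c}Y_{cc}$. For the off-diagonal support condition your mechanism genuinely differs from the paper's: the paper argues that positivity of $H^{\pm1}$ on the basis vectors pins the sign of $Y_{ab}(i,j)$ off the down-sets, whereas you use that $\eta$ carries the principal order ideal $I(\ee_{(b,j)})$ onto $I(H\ee_{(b,j)})$ and that the induced automorphism of the lattice of down-sets is level-preserving. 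Your route is the cleaner one -- positivity of $H\ee_{(b,j)}$ alone constrains only the \emph{negative} entries of the column, so the paper's one-line argument is itself incomplete as written -- and it rigorously yields: $Y_{ab}(i,j)=0$ whenever $(b,\sigma_b(j))\not\myto(a,i)$, where $\sigma_b(j)=(m_b/n_b)j-\ell_b$ is the permutation given by the diagonal block.

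The gap is exactly the step you defer to ``bookkeeping'': the equivalence $(b,\sigma_b(j))\myto(a,i)\Leftrightarrow(b,j)\myto(a,i)$ says that $G\mid\sigma_b(j)-j$ for all $j$, with $G$ the gcd governing reachability from level $b$ to level $a$, i.e.\ $G\mid(m_b/n_b-1)$ and $G\mid\ell_b$; nothing in the hypotheses forces this, and no $\sigma_c$ is ``pinned by the intertwining'' -- each diagonal block carries a free shift $\ell_c$, as the automorphisms constructed at the start of the proof of Lemma \ref{describeH} show (also, level $k+1$ elements are \emph{minimal} in $\PP$, not maximal). Concretely, take $r=8$, $\mm=(1,5)$, $\nn=3\mm=(3,7)$, so $G=\gcd(4,8)=4$ and $\PP(8;\mm)=\PP(8;\nn)$. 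Then $\operatorname{diag}(\perm_3,\perm_3)$ induces an isomorphism $\DT(8;\mm)\to\DT(8;\nn)$, and $\left[\begin{smallmatrix}I&0\\ \perm_5&I\end{smallmatrix}\right]$ is an automorphism of $\DT(8;\nn)$ (one checks $\sh^{7}\perm_5=\perm_5\sh^{3}$, and positivity holds since $4\mid 5j-j$); their composite is an isomorphism of triples with $Y_{21}=\perm_7$, whose nonzero entries at odd $j$ sit at $i-j\equiv 2\pmod 4$, where $(1,j)\not\myto(2,i)$. So no propagation argument can close your gap: the support condition phrased with the domain index $(b,j)$ is simply false for such $H$, and only your $\sigma_b$-twisted version survives in general. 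You should know this defect is inherited from the statement itself -- the paper's appeal to $H^{-1}$ silently ignores the same permutation twist -- and it is harmless where the proposition is later applied (prime $r$, invariants $\overline{\Delta}$ used only up to $\sim$, and the normalizations $\ell_c=0$ of Lemma \ref{describeH} and Remark \ref{zerocols}, under which twisted and untwisted conditions coincide). But as a proof of the proposition as literally stated, your final step fails, and your instinct that ``real care is needed'' there was pointing at a genuine problem rather than mere bookkeeping.
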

\begin{proof}
We let $H$ be given with the necessary property
\[
H\myA[r;\mm]=\myA[r;\nn]H
\]
It follows directly from Lemma \ref{describeDQ}(ii), which describes the order structure induced on $\ZZ^{r(k+1)}$, that any such matrix must be lower triangular, and that every diagonal block matrix $Y_{cc}$ must be a permutation matrix. With $\ell_c$ the index of the nonvanishing entry of the first column of $Y_{cc}$ we get from the requirement that $Y_{cc}\sh^{m_c}=\sh^{n_c}Y_{cc}$ that $Y_{cc}$ has the stipulated form.
It also follows from the order structure that 
\[
Y_{ab}(i,j)\geq 0
\]
in the cases considered for $H$ to send the canonical basis vectors, which are all positive, to positive elements. The same argument applied to $H^{-1}$ shows the other inequality.

The two final claims follow from  (iv) and (v) of Lemma \ref{describeDQ}.
\end{proof}

We now, for later use, analyze the $k=1$ more carefully. We easily see, by the results below, that there is no new information in the dimension quadruples in this case, but will use a detailed understanding of the limited freedom of choice in the $Y_{21}$ matrix in an $H$ implementing the isomorphism of dimension quadruples to study cases of larger $k$, mainly under the added condition that $r$ is a prime.

\begin{theorem}\label{Thm:IdealLatticeplus} The statement  
\begin{enumerate}[(i)]\addtocounter{enumi}{5}
\item $C^*(\ourE\times_1\ZZ)\cong C^*(\ourEalt\times_1\ZZ)$
\end{enumerate}
is equivalent to (i)--(v) of Theorem \ref{Thm:IdealLattice} for fixed $r$ and $k$ and varying $m_i,n_i\in\Zru$:
\begin{proof}
When (iv) holds, the orderings on $K_0(C^*(\ourE\times_1\ZZ))$ and $K_0(C^*(\ourEalt\times_1\ZZ))$ agree by Proposition \ref{charH}, and since AF algebras are classified up to stable isomorphism by their ordered $K_0$-group, we get that $C^*(\ourE\times_1\ZZ)\otimes \KKK\simeq C^*(\ourEalt\times_1\ZZ)\otimes \KKK$. But these $C^*$-algebras are stable, so (vi) follows. Obviously, (vi) implies (ii).
\end{proof}
\end{theorem}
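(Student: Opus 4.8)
The plan is to insert statement (vi) into the equivalence of (i)--(v) already furnished by Theorem \ref{Thm:IdealLattice}; since those five conditions are mutually equivalent, it suffices to prove the two implications (iv) $\Rightarrow$ (vi) and (vi) $\Rightarrow$ (ii). The structural fact that drives everything is that $\ourE\times_1\ZZ$ and $\ourEalt\times_1\ZZ$ contain no cycles, so that $C^*(\ourE\times_1\ZZ)$ and $C^*(\ourEalt\times_1\ZZ)$ are AF algebras and are therefore governed by their ordered $K_0$-groups.

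For (iv) $\Rightarrow$ (vi) I would argue as follows. By Lemma \ref{describeDQ}, the ordered group $(K_0(C^*(\ourE\times_1\ZZ)),K_0(C^*(\ourE\times_1\ZZ))_+)$ is identified with $\ZZ^{r(k+1)}$ carrying a positive cone that is a function only of the partial order $\myto$ on $\PP(r;\mm)$. Hence, when (iv) holds and $(\PP(r;\mm),\myto)=(\PP(r;\nn),\myto)$, the identity of $\ZZ^{r(k+1)}$ is an order isomorphism of the two ordered $K_0$-groups, and Elliott's classification of AF algebras up to stable isomorphism by their ordered $K_0$-group delivers $C^*(\ourE\times_1\ZZ)\otimes\KKK\cong C^*(\ourEalt\times_1\ZZ)\otimes\KKK$. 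To promote this to the unstabilized statement (vi), I would verify that both algebras are themselves stable, so that tensoring with $\KKK$ has no effect.

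Establishing stability is the step I expect to require the most care, and I would handle it through the criterion that a row-finite graph $C^*$-algebra is stable precisely when the graph admits no nonzero bounded graph trace (Tomforde). The graph $\ourE\times_1\ZZ$ is row-finite and has no sinks, and a bounded graph trace $g$ must be constant along each infinite periodic line: reading the trace relation at the top level $i=k+1$ gives $g(((v_{k+1},\ell),n))=g(((v_{k+1},\ell+m_{k+1}),n+1))$, forcing the common value to vanish for $\sum_v g(v)$ to be finite, and an induction downward through the levels---where the already-vanishing higher levels collapse the relation to the one-term form---forces $g\equiv 0$. Thus no nonzero bounded graph trace exists, both algebras are stable, and (vi) follows. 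Finally, (vi) $\Rightarrow$ (ii) is immediate, since any isomorphism $C^*(\ourE\times_1\ZZ)\cong C^*(\ourEalt\times_1\ZZ)$ carries the ideal lattice of one onto that of the other, which is exactly (ii); combined with Theorem \ref{Thm:IdealLattice} this closes the equivalence.
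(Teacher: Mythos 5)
Your proof is correct and follows essentially the same route as the paper: identify the ordered $K_0$-groups via the cone description depending only on $(\PP(r;\mm),\myto)$, invoke Elliott's classification of AF algebras to get $C^*(\ourE\times_1\ZZ)\otimes\KKK\cong C^*(\ourEalt\times_1\ZZ)\otimes\KKK$, remove the stabilization by stability of the algebras, and observe that (vi)$\Rightarrow$(ii) is immediate. The only difference is that the paper simply asserts stability, whereas you justify it via Tomforde's graph-trace criterion (correctly applied here, since the skew-product graphs are row-finite and acyclic, so the cycle condition in Tomforde's theorem is vacuous) -- a welcome addition of detail rather than a different approach.
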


\begin{theorem} \label{mainresult3plus} The conditions in Theorem \ref{mainresult3} are equivalent to
\begin{enumerate}[(i)]\addtocounter{enumi}{3}
\item $\DQ(r;\mm)\simeq \DQ(r;\nn)$
\end{enumerate}
\begin{proof}
By Theorem \ref{Thm:DQ0} we get that (i) implies (iv). In the other direction, when (iv) holds, then Theorem \ref{Thm:IdealLatticeplus} applies to show (ii).
\end{proof}
\end{theorem}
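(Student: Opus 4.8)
The plan is to prove the two implications $(\mathrm{i})\Rightarrow(\mathrm{iv})$ and $(\mathrm{iv})\Rightarrow(\mathrm{ii})$, where $(\mathrm{i})$, $(\mathrm{ii})$, $(\mathrm{iii})$ denote the conditions of Theorem \ref{mainresult3}; combined with the equivalences $(\mathrm{i})\Leftrightarrow(\mathrm{ii})\Leftrightarrow(\mathrm{iii})$ already established there, this closes the cycle. For $(\mathrm{i})\Rightarrow(\mathrm{iv})$ I would begin from the corner description \eqref{thecorner}, which realizes $\QLS[3]$ as $p\,C^*(\ourE)\,p$ with $p=\sum_{i=1}^{2}P_{(v_i,0)}$ a sum of vertex projections, the identification being equivariant for $\gamma$. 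Hence $(\mathrm{i})$ furnishes an equivariant isomorphism $(p\,C^*(\ourE)\,p,\gamma)\simeq(q\,C^*(\ourEalt)\,q,\gamma)$, and after tensoring with $\KKK$ we obtain $(p\,C^*(\ourE)\,p\otimes\KKK,\gamma\otimes\id)\simeq(q\,C^*(\ourEalt)\,q\otimes\KKK,\gamma\otimes\id)$. The second part of Theorem \ref{Thm:DQ0} — which, crucially, does not require $p$ and $q$ to be full — then yields $\DQ(r;\mm)\simeq\DQ(r;\nn)$, that is, $(\mathrm{iv})$.

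For $(\mathrm{iv})\Rightarrow(\mathrm{ii})$ I would unravel the definitions. An isomorphism $\DQ(r;\mm)\simeq\DQ(r;\nn)$ is in particular an isomorphism of the underlying dimension triples, and forgetting the translation $\lt_*$ and the order ideal $I([P_0])$ it restricts to an isomorphism of ordered groups $(K_0(C^*(\ourE\times_1\ZZ)),K_0(C^*(\ourE\times_1\ZZ))_+)\simeq(K_0(C^*(\ourEalt\times_1\ZZ)),K_0(C^*(\ourEalt\times_1\ZZ))_+)$. Since $\ourE\times_1\ZZ$ and $\ourEalt\times_1\ZZ$ have no cycles the associated $C^*$-algebras are AF, so by Elliott's classification this gives a stable isomorphism $C^*(\ourE\times_1\ZZ)\otimes\KKK\simeq C^*(\ourEalt\times_1\ZZ)\otimes\KKK$; as these algebras are already stable, this is exactly condition $(\mathrm{vi})$ of Theorem \ref{Thm:IdealLatticeplus}. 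Invoking that theorem delivers condition $(\mathrm{v})$ of Theorem \ref{Thm:IdealLattice}, which for $k=1$ reads $\gcd(m_2-m_1,r)=\gcd(n_2-n_1,r)$, i.e.\ $(\mathrm{ii})$.

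The only delicate point is the first implication, where one must verify that \eqref{thecorner} is genuinely $\gamma$-equivariant so that the hypotheses of Theorem \ref{Thm:DQ0} are met; I would route through the stable version of that theorem precisely to sidestep any question of whether the corner projections are full. The reverse implication is comparatively soft: it uses nothing about the dimension quadruple beyond the ordered $K_0$-group buried inside it, and then defers entirely to the previously proved equivalences. This also makes transparent the observation recorded before Theorem \ref{Thm:IdealLatticeplus}, namely that in the $k=1$ case the dimension quadruple records no information beyond the ideal lattice.
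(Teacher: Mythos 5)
Your proof is correct and follows essentially the same route as the paper: (i)$\Rightarrow$(iv) by passing to the stabilized equivariant isomorphism and invoking Theorem \ref{Thm:DQ0}, and (iv)$\Rightarrow$(ii) by extracting the ordered $K_0$-isomorphism from the dimension quadruple, applying AF classification to land in condition (vi) of Theorem \ref{Thm:IdealLatticeplus}, and reading off $\gcd(m_2-m_1,r)=\gcd(n_2-n_1,r)$ via condition (v) of Theorem \ref{Thm:IdealLattice}. One small caveat: your parenthetical that the stable half of Theorem \ref{Thm:DQ0} ``does not require $p$ and $q$ to be full'' misreads that theorem---its proof identifies the dimension triple of the \emph{full} corner with that of the whole graph algebra, so fullness is still in force---but this is harmless here, since $\gcd(m_i,r)=1$ makes the cycle at each level of $\ourE$ pass through every vertex of that level, so the corner projection $\sum_i P_{(v_i,0)}$ is full and the hypotheses are met regardless.
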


\begin{definition}
$\Delta(Z)_i=\operatorname{trace}(\sh^iZ)$.
\end{definition}

%

The following result is  elementary.

\begin{lemma}\label{SYYSZ}
\mbox{}
\begin{enumerate}[(i)]
\item When 
\[
\sh^iY-Y\sh^j =Z
\]
for $i,j\in \Zru$ then $Y$ is completely determined by its first column.
\item There is a matrix 	$Y$  solving
\[
\sh Y-Y\sh =Z
\]
precisely when $\Delta(Z)=\oo$. 
\item When $Y$ is the unique solution with first column $(y_{i})$ according to (i) and (ii), 
\[
Y(i,j)=\sum_{k=1}^j z_{i-j-1+k,k}+y_{i-j}
\]
\end{enumerate}
\end{lemma}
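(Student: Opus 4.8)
The plan is to translate each matrix identity into a scalar recursion using the explicit cyclic structure of $\sh$. Reading all indices modulo $r$, the relation $\sh^i\ee_c=\ee_{c-i}$ gives $(\sh^i)_{a,c}=\iverson{a\equiv c-i}$, so that $(\sh^iY)(a,b)=Y(a+i,b)$ and $(Y\sh^j)(a,b)=Y(a,b-j)$. Hence the equation $\sh^iY-Y\sh^j=Z$ is equivalent to the scalar system
\[
Y(a+i,b)-Y(a,b-j)=Z(a,b)\qquad(a,b\in\Z/r),
\]
which I would rewrite as $Y(\rho,b)=Y(\rho-i,b-j)+Z(\rho-i,b)$. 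This exhibits column $b$ of $Y$ as determined by column $b-j$ together with $Z$, and everything else follows from unrolling this recursion.

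For (i) I would prove uniqueness directly: the difference $Y'$ of two solutions sharing the same first column satisfies the homogeneous recursion $Y'(\rho,b)=Y'(\rho-i,b-j)$. Since $j\in\Zru$, multiplication by $j$ is a bijection of $\Z/r$, so the columns $0,j,2j,\dots,(r-1)j$ exhaust all of $\Z/r$; starting from the vanishing first column ($b=0$) and iterating $b\mapsto b+j$ forces every column of $Y'$ to vanish. Thus $Y$ is completely determined by its first column.

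For (ii), necessity of $\Delta(Z)=\oo$ is immediate from cyclicity of the trace: if $Z=\sh Y-Y\sh$ then, for every $\ell$,
\[
\Delta(Z)_\ell=\operatorname{trace}(\sh^\ell Z)=\operatorname{trace}(\sh^{\ell+1}Y)-\operatorname{trace}(\sh^\ell Y\sh)=0,
\]
using $\operatorname{trace}(\sh^\ell Y\sh)=\operatorname{trace}(\sh^{\ell+1}Y)$. For sufficiency I would take $i=j=1$ and build a candidate $Y$ from a freely chosen first column $(y_i)=(Y(i,0))$ by iterating $Y(\rho,b)=Y(\rho-1,b-1)+Z(\rho-1,b)$ for $b=1,\dots,r-1$. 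This automatically satisfies every instance of the equation relating consecutive columns $0,1,\dots,r-1$; the only remaining instance is the wrap-around relation between column $r-1$ and column $0$. Substituting the iterated expression for $Y(a,r-1)$ into the $b=0$ equation makes the $y$-terms cancel and collapses it to the single requirement $\sum_{k=0}^{r-1}Z(a+k,k)=0$ for all $a$, which is exactly $\Delta(Z)_a=0$. Hence the construction closes up precisely when $\Delta(Z)=\oo$.

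Finally, (iii) is the closed form of the same iteration: unrolling $Y(\rho,b)=Y(\rho-1,b-1)+Z(\rho-1,b)$ down to the first column yields
\[
Y(i,j)=\sum_{k=1}^{j}Z(i-j-1+k,k)+y_{i-j},
\]
with the empty sum at $j=0$ recovering $Y(i,0)=y_i$, and uniqueness from (i) identifies this as the solution. The one step demanding genuine care is the index bookkeeping modulo $r$ in the wrap-around computation of (ii): I must check that the closing constraint is literally the vector $\Delta(Z)$ (via $\Delta(Z)_a=\sum_k Z(a+k,k)$) and not some reindexed variant, and that the $y$-terms cancel exactly. Everything else is routine substitution.
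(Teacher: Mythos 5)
Your proof is correct. The paper supplies no proof of this lemma at all (it is merely labelled ``elementary''), and your argument is surely the intended one: translating to the scalar system $Y(a+i,b)-Y(a,b-j)=Z(a,b)$, using invertibility of $j$ so the columns $0,j,2j,\dots$ exhaust $\ZZ/r$ for (i), trace cyclicity for necessity in (ii), and unrolling the recursion for (iii) and for sufficiency. In particular your delicate wrap-around bookkeeping checks out: the closed form gives $Y(a,r-1)=\sum_{k=1}^{r-1}Z(a+k,k)+y_{a+1}$, so the remaining $b=0$ equation collapses, with the $y$-terms cancelling exactly, to $\sum_{k=0}^{r-1}Z(a+k,k)=0$ for all $a$, and since $(\sh^aZ)(k,k)=Z(k+a,k)$ this is literally $\Delta(Z)_a=0$, i.e.\ $\Delta(Z)=\oo$, not a reindexed variant.
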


%
%
%
%


\begin{lemma}\label{describeH}
When $r$ is a prime, and $\DQ(r;(m_1,m_2))\simeq \DQ(r;(n_1,n_2))$, then there is an isomorphism given on the form
\[
H=\begin{bmatrix}\perm_{m_1/n_1}&0\\Y_{21}&\perm_{m_2/n_2}\end{bmatrix}
\]
with $Y_{21}$ determined uniquely by its first column $(y_i)$.
When $m_1\not=m_2$, we  have
\[
Y_{21}(i,j)=\iverson{\frac{i/n_2-j/m_1}{n_1/n_2-1}\leq \ell/m_1}-\iverson{\frac{i/n_2-j/m_1}{m_2/m_1-1}\leq \ell/m_1}+y_{i/n_2-j/m_1}
\]
and when $m_1=m_2$, we have
\[
Y_{21}(i,j)=y_i
\]
\end{lemma}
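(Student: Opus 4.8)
The plan is to combine the structural description of dimension‑triple isomorphisms from Proposition \ref{charH} with the explicit solution of a single Sylvester‑type equation provided by Lemma \ref{SYYSZ}. Since $\DQ(r;(m_1,m_2))\simeq\DQ(r;(n_1,n_2))$, Proposition \ref{charH} with $k=1$ furnishes an implementing $H\in\matrM_{2r}(\ZZ)$ that is block lower triangular, has $\ell_1=0$, and has diagonal blocks $\perm_{m_1/n_1}$ and $\sh^{\ell_2}\perm_{m_2/n_2}$. Writing out $H\myA[r;(m_1,m_2)]=\myA[r;(n_1,n_2)]H$ and using the relations $\sh^{n_i}\perm_{m_i/n_i}=\perm_{m_i/n_i}\sh^{m_i}$ (which follow from $\sh\perm_\alpha=\perm_\alpha\sh^\alpha$), the diagonal blocks match automatically and everything collapses to the single equation
\[
\sh^{n_2}Y_{21}-Y_{21}\sh^{m_1}=\sh^{n_2}\left(\perm_{m_1/n_1}-\sh^{\ell_2}\perm_{m_2/n_2}\right)
\]
for the off‑diagonal block. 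The first task is to justify taking $\ell_2=0$, and the second is to solve for $Y_{21}$.

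Because $r$ is prime every weight is a unit, so I would conjugate by permutation matrices, left‑multiplying by $\perm_{n_2}$ and right‑multiplying by $\perm_{m_1}^{-1}$ and setting $\tilde Y=\perm_{n_2}Y_{21}\perm_{m_1}^{-1}$; since $\perm_{n_2}\sh^{n_2}\perm_{n_2}^{-1}=\sh=\perm_{m_1}\sh^{m_1}\perm_{m_1}^{-1}$ this produces the normalized form $\sh\tilde Y-\tilde Y\sh=\tilde W$ treated in Lemma \ref{SYYSZ}(ii)--(iii). A direct check gives $\tilde Y_{\tilde i,\tilde j}=(Y_{21})_{n_2\tilde i,\,m_1\tilde j}$, which is exactly the reindexing $i\mapsto i/n_2$, $j\mapsto j/m_1$ visible in the statement. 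Solvability now reads $\Delta(\tilde W)=\oo$, and I would evaluate this by computing the traces $\operatorname{trace}(\sh^i\tilde W)$: for prime $r$ each of the two permutation summands contributes either a single fixed index (when the relevant multiplier $n_1/n_2$, resp.\ $m_2/m_1$, differs from $1$) or a whole column (when it equals $1$). By Theorem \ref{Thm:IdealLattice} the hypothesis forces $\gcd(m_2-m_1,r)=\gcd(n_2-n_1,r)$, so for prime $r$ either $m_1\neq m_2$ and $n_1\neq n_2$ together, or $m_1=m_2$ and $n_1=n_2$. In the first case both contributions are single and cancel, so $\Delta(\tilde W)=\oo$ for every $\ell_2$ and in particular for $\ell_2=0$; in the second case $\perm_{m_1/n_1}=\perm_{m_2/n_2}$ and the obstruction forces $\ell_2=0$, after which $\tilde W=0$. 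Thus $\ell_2=0$ is admissible throughout, giving the asserted form of $H$.

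With $\ell_2=0$ fixed, Lemma \ref{SYYSZ}(i) shows $\tilde Y$, hence $Y_{21}$, is determined by its first column $(y_i)$, and Lemma \ref{SYYSZ}(iii) gives $\tilde Y(\tilde i,\tilde j)=\sum_{k=1}^{\tilde j}\tilde W_{\tilde i-\tilde j-1+k,\,k}+y_{\tilde i-\tilde j}$. Pushing the two blocks through the conjugation turns $\sh^{n_2}\perm_{m_1/n_1}$ and $\sh^{n_2}\perm_{m_2/n_2}$ into $\sh\perm_{n_2/n_1}$ and $\sh\perm_{m_2/m_1}$, so each entry of $\tilde W$ is a difference of two indicators supported on the lines $\tilde i-\tilde j=k(n_1/n_2-1)$ and $\tilde i-\tilde j=k(m_2/m_1-1)$. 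Summing each indicator over $1\le k\le\tilde j$ reduces the count to the comparison of its unique solution with the summation bound, which under the reindexing is the threshold $\ell/m_1$, yielding exactly
\[
\iverson{\tfrac{i/n_2-j/m_1}{n_1/n_2-1}\le \ell/m_1}-\iverson{\tfrac{i/n_2-j/m_1}{m_2/m_1-1}\le \ell/m_1}+y_{i/n_2-j/m_1},
\]
while in the degenerate case $\tilde W=0$ leaves only $Y_{21}(i,j)=y_i$.

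The main obstacle is twofold: establishing the vanishing of $\Delta(\tilde W)$ together with the admissibility of $\ell_2=0$ (the conceptual heart, where primality and the gcd invariant enter), and then the bookkeeping that collapses the telescoped indicator sum into the precise one‑sided Iverson brackets with the stated denominators and threshold. A final routine point is to record that the resulting $H$ is a genuine isomorphism of dimension quadruples: invertibility over $\ZZ$ is automatic since the diagonal blocks are permutations, and a suitable choice of the free first column $(y_i)$ — one exists because $\DQ$‑isomorphism is assumed — renders both $H$ and $H^{-1}$ order preserving, so that Proposition \ref{charH} applies with $\ell_1=0$ to conclude.
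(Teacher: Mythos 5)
Your proposal is correct and, in its skeleton, matches the paper's proof: both start from Proposition \ref{charH} (which forces the block lower-triangular shape and $\ell_1=0$), both collapse the intertwining $H\myA[r;\mm]=\myA[r;\nn]H$ to a single Sylvester-type equation for $Y_{21}$, both normalize by permutation conjugation so that Lemma \ref{SYYSZ}(i)--(iii) delivers uniqueness from the first column and the explicit Iverson-bracket entries, and both transport the answer back via the reindexing $Y_{21}(i,j)=\widetilde{Y}_{21}(i/n_2,j/m_1)$. The one genuine divergence is how the shift power $\ell_2$ is eliminated. The paper first constructs, for \emph{arbitrary} $\ell_1,\ell_2$, an automorphism of $\DT(r;(m_1,m_2))$ with diagonal blocks $\sh^{\ell_1},\sh^{\ell_2}$ (solvable by Lemma \ref{SYYSZ}(ii) because $\Delta(\perm_{m_1/m_2})$ is the constant vector with value one) and then composes the given isomorphism with such an automorphism to strip the shifts; you instead compute the obstruction $\Delta$ of the right-hand side of the normalized equation directly, showing it vanishes for every $\ell_2$ when $m_1\neq m_2$ (using, via Theorem \ref{Thm:IdealLattice} and primality, that then also $n_1\neq n_2$, so both permutation summands have constant trace vector), and that solvability \emph{forces} $\ell_2=0$ when $m_1=m_2$. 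Your route is slightly more direct, at the cost of having to check that the freshly solved $H$ is a genuine $\DQ$-isomorphism; here your closing remark that a ``suitable choice'' of $(y_i)$ is needed for order preservation is more caution than required -- for $m_1\neq m_2$ and $r$ prime every level-$2$ vertex is dominated by every level-$1$ vertex, so (as the paper notes in passing) \emph{any} integer $Y_{21}$ yields an order isomorphism, and $\ell_1=0$ settles the corner condition by Proposition \ref{charH}.

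Two bookkeeping points you should tidy, neither a conceptual gap. First, verify the orientation of your conjugation: with $(\perm_\alpha)_{a,b}=\iverson{a=\alpha b}$, the map $\tilde Y=\perm_{n_2}Y_{21}\perm_{m_1}^{-1}$ has entries related to $Y_{21}$ by division or multiplication of indices depending on convention, and to land exactly on the stated $y_{i/n_2-j/m_1}$ you must take the same orientation as the paper's commuting square of $\DQ$'s. Second, in the degenerate case $m_1=m_2$ your normalized equation becomes $\sh\tilde Y=\tilde Y\sh$, whose solutions are circulant-type (constant along $i-j$), so your assertion that this ``leaves only $Y_{21}(i,j)=y_i$'' deserves at least a line reconciling it with the stated formula -- though the paper itself dismisses this case as ``straightforward,'' so you are no less rigorous there than the original.
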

\begin{proof}
Assume first that $m_1\not=m_2$. We start by showing that for any choice of $\ell_1,\ell_2$, there is a choice of a matrix $X_{21}$ so that 
\[
\begin{bmatrix}\sh^{\ell_1}&0\\X_{21}&\sh^{\ell_2}\end{bmatrix}
\]
induces an automorphism on $\DT(r;(m_1,m_2))$. We note that it suffices to show the existence of $\widetilde{X}_{21}$  so that 
\[
\begin{bmatrix}\sh^{\ell_1}&0\\\widetilde{X}_{21}&\sh^{\ell_2}\end{bmatrix}:\DT(r;(1,m_2/m_1))\to \DT(r;(m_1/m_2,1))
\]
induces an isomorphism, since in that case we can let
\[
X_{21}(i,j)=\widetilde{X}_{21}(i/m_2,j/m_1)
\]
and note that
\[
\xymatrix{
\DT(r;(m_1,m_2))\ar[rr]^-{\left[\begin{smallmatrix}\sh^{\ell_1}&0\\X_{21}&\sh^{\ell_2}\end{smallmatrix}\right]}\ar[dd]_-{\left[\begin{smallmatrix}\perm_{1/m_1}&0\\0&\perm_{1/m_1}
\end{smallmatrix}\right]}&&\DT(r;(m_1,m_2)\\
&&\\
\DT(r;(1,m_2/m_1))\ar[rr]_-{\begin{array}{c}\mbox{}\\\left[\begin{smallmatrix}\sh^{\ell_1m_2}\perm_{m_1/m_2}&0\\\widetilde{X}_{21}&\sh^{\ell_2m_2}\perm_{m_1/m_2}\end{smallmatrix}\right]\end{array}}&&\DT(r;(m_1/m_2,1))\ar[uu]_-{\left[\begin{smallmatrix}\perm_{m_2}&0\\0&\perm_{m_2}\end{smallmatrix}\right]}
}
\]
commutes. The map induced will be an order isomorphism for any choice of $\widetilde{X}_{21}$, but we need to establish that 
\[
\left[\begin{matrix}\sh^{\ell_1m_2}\perm_{m_1/m_2}&0\\\widetilde{X}_{21}&\sh^{\ell_2m_2}\perm_{m_1/m_2}\end{matrix}\right]
\begin{bmatrix}
\sh&0\\
-\sh^{m_1/m_2}&\sh^{m_1/m_2}\end{bmatrix}
=
\begin{bmatrix}
\sh^{m_2/m_1}&0\\
-\sh&\sh\end{bmatrix}
\left[\begin{matrix}\sh^{\ell_1m_2}\perm_{m_1/m_2}&0\\\widetilde{X}_{21}&\sh^{\ell_2m_2}\perm_{m_1/m_2}\end{matrix}\right]
\]
which comes out to the condition 
\[
\widetilde{X}_{21}\sh-\sh^{\ell_2m_2+1}\perm_{m_1/m_2}=-\sh^{\ell_1m_2+1}\perm_{m_1/m_2}+\sh\widetilde{X}_{21}
\]
at the 21 entry. This is equivalent to 
\[
\sh\widetilde{X}_{21}-\widetilde{X}_{21}\sh=-\sh^{\ell_2m_2+1}\perm_{m_1/m_2}+\sh^{\ell_1m_2+1}\perm_{m_1/m_2}
\]
which has a solution by Lemma \ref{SYYSZ}(ii) because $\Delta(\perm_{m_1/m_2})$ is constant with the value one in each entry.

Composing with the appropriate automorphism of $\DT(r;(m_1,m_2))$ just proved to exist, we have proved that there is an $H$ as stated. We now set out to describe this $H$ 
under the additional condition that $m_1=n_2=1$.  The intertwining condition becomes
\[
\begin{bmatrix}
\perm_{n_1}&0\\{Y}_{21}&\perm_{1/m_2}
\end{bmatrix}
\begin{bmatrix}
\sh&0\\-\sh^{m_2}&\sh^{m_2}
\end{bmatrix}=\begin{bmatrix}
\sh^{n_1}&0\\-\sh&\sh
\end{bmatrix}\begin{bmatrix}
\perm_{n_1}&0\\{Y}_{21}&\perm_{1/m_2}
\end{bmatrix},
\]
which gives us
\[
{Y}_{21}\sh-\perm_{1/m_2}\sh^{m_2}=-\sh P_{n_1}+\sh \widetilde{}Y_{21}
\]
and consequently we have the situation described in  Lemma \ref{SYYSZ}(ii) with
\[
Z=\sh P_{n_1}-P_{1/m_2}\sh^{m_2}.
\]
This matrix has the  entries
\[
Z(i,j)=\iverson{i=n_1j-1}-\iverson{i=j/m_2-1}
\]
so appealing to  Lemma \ref{SYYSZ}(iii) we get
\begin{eqnarray*}
Y_{21}(i,j)&=&\sum_{k=1}^j\left(\iverson{i-j-1+k=n_1k-1}-\iverson{i-j-1+k=k/m_2-1}\right)+y_{i-j}\\
&=&\sum_{k=1}^j\left(\iverson{k=\frac{i-j}{n_1-1}}-\iverson{k=\frac{i-j}{1/m_2-1}}\right)+y_{i-j}\\
&=&\iverson{1\leq \frac{i-j}{n_1-1}\leq j}-\iverson{1\leq \frac{i-j}{1/m_2-1}\leq j}+y_{i-j}\\
&=&\iverson{ \frac{i-j}{n_1-1}\leq \ell}-\iverson{ \frac{i-j}{1/m_2-1}\leq j}+y_{i-j}\\
\end{eqnarray*}

As above, we reduce from the general to the special case by the fact that 
\[
\xymatrix{
\DQ(r;(m_1,m_2))\ar[rr]^-{\left[\begin{smallmatrix}\perm_{n_1/m_1}&0\\Y_{21}&\perm_{m_2}\end{smallmatrix}\right]}\ar[dd]_-{\left[\begin{smallmatrix}\perm_{1/m_1}&0\\0&\perm_{1/m_1}
\end{smallmatrix}\right]}&&\DQ(r;(n_1,n_2))\\
&&\\
\DQ(r;(1,m_2/m_1))\ar[rr]_-{\begin{array}{c}\mbox{}\\\left[\begin{smallmatrix}\perm_{n_1/m_1}&0\\\widetilde{Y}_{21}&\perm_{n_2/m_2}\end{smallmatrix}\right]\end{array}}&&\DQ(r;(n_1/n_2,1))\ar[uu]_-{\left[\begin{smallmatrix}\perm_{n_2}&0\\0&\perm_{n_2}
\end{smallmatrix}\right]}\\
}
\]
commutes with
\[
Y_{21}(i,j)=\widetilde{Y}_{21}(i/n_2,j/m_1).
\]

The claims for $m_1=m_2$ are straightforward. 
\end{proof}

\section{Dimension 5, $r$ prime, sufficiency}



%
%
%
In order to construct explicit equivariant isomorphisms we will in this section view $C(L_q^5(r;\underline{m}))$ as the fixed point algebra  $C^*(L_5)^{\rho_{\underline{m}}^r}$ of the graph $C^*$-algebra $C^*(L_5)$ that describes the quantum 5-sphere. Hence all statements will be on finding equivariant isomorphism between $C^*(L_5)^{\rho_{\underline{m}}^r}$ and $C^*(L_5)^{\rho_{\underline{n}}^r}$. 

Let the vertices and edges in the graph $L_5$ be denoted as in Figure \ref{quantum5sphere}. Let $t_1,t_2,t_3\in \{0,1,\dots,r-1\}$ be such that  $m_1(t_1+1)+m_2(t_2+1)+m_3t_3\equiv 0 \pmod{r}$ and consider $\alpha_{t_1,t_2,t_3}=e_{11}^{t_1}e_{12}e_{22}^{t_2}e_{23}e_{33}^{t_3}$.    Under this condition we have
$$
t_2\equiv -m_2^{-1}m_1(t_1+1)-m_2^{-1}m_3t_3-1 \pmod{r}. 
$$
Let $t_2':=-m_2^{-1}m_1(t_1+1) \mod{r}$ and $t_2'':=-m_2^{-1}m_3t_3-1 \mod{r}$ then 
\begin{equation}\label{tandk}
m_1(t_1+1)+m_2t_2'\equiv 0 \Mod{r} \hspace{0.2cm} \text{and} \hspace{0.2cm}  m_2(t_2''+1)+m_3t_3\equiv 0 \Mod{r}
\end{equation}
and $t_2\equiv t_2'+t_2''\Mod{r}$. 
Hence, all triples of numbers $(t_1,t_3,t_3)$ such that $m_1(t_1+1)+m_2(t_2+1)+m_3t_3\equiv 0 \pmod{r}$ comes from two pairs $(t_1,t_2')$ and $(t_2'',t_3)$ satisfying the assumptions in \eqref{tandk}. 

If $t_2'+t_2''<r$, then we clearly have $\alpha_{t_1,t_2,t_3}=\alpha_{t,_1t_2'}\alpha_{t_2'',t_3}$ and therefore they will not be included in the set $\mathcal{A}(r;(m_1,m_2,m_3))$ . If $t_2'+t_2''\geq r$, then we cannot write $\alpha_{t_1,t_2,t_3}$ as a combination of these paths since it would imply that $t_2\geq r$ which is against the assumption. Note that it is not possible to use a power of the generator $S_{e_{22}^r}$ in between $S_{\alpha_{t_1,t_2'}}$ and $S_{\alpha_{t_2'',t_3}}$ to reduce $t_2'+t_2''$ to a number less than $r$ since $v_2$ is not a sink. Then 
$$
\mathcal{A}(r;(m_1,m_2,m_3))=\{\alpha_{t_1,t_2,t_3} | \ m_1(t_1+1)+m_2(t_2+1)+m_3t_3\equiv 0 \pmod{r} \ \text{and} \ t_2'+t_2''\geq r\}
$$
and 
$$
\newSS{(m_1,m_2,m_3))}=\{t_1+t_2+t_3+2\lvert \alpha_{t_1,t_2.t_3}\in \mathcal{A}(r;(m_1,m_2,m_3)) \}.
$$
by Definition \ref{def:multisets}. 

In the following proposition the vertices and edges of $L_5^{r;(m_1,m_2,m_3)}$ are as in Figure \ref{graphlensspace5}.
\begin{proposition}\label{PropGenerators5}
Let $r$ be a positive integer and $m_1,m_2,m_3$ be positive integers such that $\gcd(m_i,r)=1$ for $i=1,2,3$ and let $C^*(\{S_{e_{ii}^r}, S_{\alpha})$ be the $C^*$-subalgebra of $C^*(L_5)$ generated by the elements 
\begin{equation}\label{generators1}
\begin{aligned}
&S_{e_{ii}^r}, i=1,2,3, \\
&S_{\alpha}, \alpha\in \mathcal{A}(r;(m_i,m_j))  ,1\leq i<j\leq 3, \\
&S_{\alpha}, \alpha\in \mathcal{A}(r;(m_1,m_2,m_3))
\end{aligned}
\end{equation}
Then there exists an isomorphism 
$$
 C^*(L_5^{r;(m_1,m_2,m_3)})\to C^*(\{S_{e_{ii}^r}, S_{\alpha}\})\subseteq C^*(L_5)^{\rho_{\underline{m}}^r}
$$
$$
\begin{aligned}
P_{w_i}&\mapsto S_{e_{ii}^r}^*S_{e_{ii}^r}=P_{v_i}, \ S_{f_i}\mapsto S_{e_{ii}^r}, \ i=1,2,3, 
\end{aligned}
$$
and $S_{g_{12}^i}, S_{g_{23}^i}$ are mapped to one of the $S_{\alpha_{t_1,t_2}}, S_{\alpha_{t_2,t_3}}$ respectively for $\alpha_{t_i,t_j}\in  \mathcal{A}(r;(m_i,m_j))$. The $S_{g_{13}^i}$ are mapped to one of the $S_{\alpha_{t_1,t_3}}$ or $S_{\alpha_{t_1,t_2,t_3}}$ with $\alpha_{t_1,t_3}\in  \mathcal{A}(r;(m_1,m_3))$ and $\alpha_{t_1,t_2,t_3}\in  \mathcal{A}(r;(m_1,m_2,m_3))$. 

It then follows that $C^*(L_5)^{\rho_{\underline{m}}^r}$ is generated as a $C^*$-algebra by the elements in \eqref{generators1}. 

\begin{proof}
We will follow the same procedure as in Proposition \ref{PropGenerators3} and show that the image of the map satisfies the Cuntz-Krieger relations for $L_5^{r;\underline{m}}$ given as follows:
\begin{equation}
\begin{aligned}\label{relation11}
S_{f_i}^*S_{f_i}&=P_{w_i}, i=1,2\\
S_{f_3}^*S_{f_3}&=S_{f_3}S_{f_3}^*=P_{w_3}, \\
S_{g^k_{ij}}^*S_{g^k_{ij}}&=P_{w_j}, 0\leq 1 \leq i<j\leq 3, \\
S_{f_i}S_{f_i}^*&\leq P_{w_i}, i=1,2 
\\ S_{g^i_{1j}}S_{g^i_{ij}}^*&\leq P_{w_i},  0\leq 1 \leq i<j\leq 3, 
\end{aligned}
\end{equation}
\begin{equation}\label{relation22}
\begin{aligned}
P_{w_2}&=S_{f_2}S_{f_2}^*+\sum_{i=1}^r S_{g^i_{23}}S_{g^i_{23}}^*,
\end{aligned}
\end{equation}
\begin{equation}\label{relation33}
\begin{aligned}
P_{w_1}&=S_{f_1}S_{f_1}^*+\sum_{i=1}^r S_{g^i_{12}}S_{g^i_{12}}^*
+\sum_{i=1}^{\frac{r(r+1)}{2}} S_{g^i_{13}}S_{g^i_{13}}^*
\end{aligned}
\end{equation}
The relations in \eqref{relation11} follows easily and \eqref{relation22} follows by a similar calculation as in Proposition \ref{PropGenerators3}. To show that the image satisfies \eqref{relation33}, we need to prove that $P_{v_1}$ equals the following in $C^*(L_5)$:
\begin{equation}\label{SumOverPaths}
\begin{aligned}
&S_{e_{11}}^r{S_{e_{11}}^r}^*+ \sum_{\alpha_{t_1,t_2}\in \mathcal{A}(r:(m_1,m_2))} S_{\alpha_{t_1,t_2}}{S_{\alpha_{t_1,t_2}}}^* + \sum_{\alpha_{t_1,t_3}\in \mathcal{A}(r:(m_1,m_3))} S_{\alpha_{t_1,t_3}} {S_{\alpha_{t_1,t_3}}}^* \\
&+ \sum_{\alpha_{t_1,t_2,t_3}\in \mathcal{A}(r:(m_1,m_2,m_3))} S_{\alpha_{t_1,t_2,t_3}} {S_{\alpha_{t_1,t_2,t_3}}}^* 
\end{aligned}
\end{equation}
The above becomes:
$$
\begin{aligned}
&S_{e_{11}}^r{S_{e_{11}}^r}^*+S_{e_{11}}^{r-1}S_{e_{12}}S_{e_{12}}^*{S_{e_{11}}^{r-1}}^* 
+\sum_{\overset{t_1=0}{t_2\equiv -m_2^{-1}m_1(t_1+1) \pmod{r}}}^{r-2} 
S_{e_{11}}^{t_1}S_{e_{12}}S_{e_{22}}^{t_2}{S_{e_{22}}^{t_2}}^*S_{e_{12}}^*{S_{e_{11}}^{t_1}}^* 
\\
&+ S_{e_{11}}^{r-1}S_{e_{13}}S_{e_{13}}^*{S_{e_{11}}^{r-1}}^* +\sum_{\overset{t_1=0}{t_3\equiv -m_3^{-1}m_1(t_1+1) \pmod{r}}}^{r-2} 
S_{e_{11}}^{t_1}S_{e_{13}}S_{e_{33}}^{t_3}{S_{e_{33}}^{t_3}}^*S_{e_{13}}^*{S_{e_{11}}^{t_1}}^* \\
&+ \sum_{\overset{l_1,l_3=0, \ l_2'+l_2''\geq r}{l_2\equiv -m_2^{-1}m_1(l_1+1)-{m_2}^{-1}m_3l_3-1 \pmod{r}}}^{r-2} 
S_{e_{11}}^{l_1}S_{e_{12}}S_{e_{22}}^{l_2}S_{e_{23}}S_{e_{33}}^{l_3}{S_{e_{33}}^{l_3}}^*{S_{e_{23}}}^*{S_{e_{22}}^{l_2}}^*S_{e_{12}}^*{S_{e_{11}}^{l_1}}^*  
\end{aligned}
$$
$$
\begin{aligned}
&=S_{e_{11}}^{r-1}{S_{e_{11}}^{r-1}}^*+\sum_{\overset{t_1=0}{}}^{r-2} 
S_{e_{11}}^{t_1}S_{e_{13}}S_{e_{13}}^*{S_{e_{11}}^{t_1}}^*  
+\sum_{\overset{t_1=0}{t_2\equiv -m_2^{-1}m_1(t_1+1) \pmod{r}}}^{r-2} 
S_{e_{11}}^{t_1}S_{e_{12}}S_{e_{22}}^{t_2}{S_{e_{22}}^{t_2}}^*S_{e_{12}}^*{S_{e_{11}}^{t_1}}^* \\
&+ \sum_{\overset{l_1,l_3=0, \ l_2'+l_2''\geq r}{l_2\equiv -m_2^{-1}m_1(l_1+1)-{m_2}^{-1}m_3l_3-1 \pmod{r}}}^{r-2} 
S_{e_{11}}^{l_1}S_{e_{12}}S_{e_{22}}^{l_2}S_{e_{23}}{S_{e_{23}}}^*{S_{e_{22}}^{l_2}}^*S_{e_{12}}^*{S_{e_{11}}^{t_1}}^*  
\end{aligned}
$$
Consider the last two sums. We fix $t_1=l_1$ and the corresponding $t_2=-m_2^{-1}m_1(t_1+1) \mod{r}$. In the last sum we then have $l_2\equiv t_2-m_2^{-1}l_3-1 \pmod{r}$. Since $l_2'+l_2''\geq r$, we can only choose the $l_3$ such that $-m_2^{-1}l_3-1 \mod{r}$ varies from $r-t_2$ to $r-1$. Then for a fixed $t_1$, the sum of the two last sums is the following in which $t_2+k$ is calculated modulo $r$: 
$$
\begin{aligned}
&S_{e_{22}}^{t_2}{S_{e_{22}}^{t_2}}^*+\sum_{k=r-t_2}^{r-1} S_{e_{22}}^{t_2+k}S_{e_{23}}{S_{e_{23}}}^*{S_{e_{22}}^{t_2+k}}^*  =S_{e_{22}}^{t_2}{S_{e_{22}}^{t_2}}^*+\sum_{i=0}^{t_2-1} S_{e_{22}}^{i}S_{e_{23}}{S_{e_{23}}}^*{S_{e_{22}}^{i}}^*=P_{v_2} 
\end{aligned}
$$
Then our expression becomes
$$
\begin{aligned}
S_{e_{11}}^{r-1}{S_{e_{11}}^{r-1}}^*+\sum_{\overset{t_1=0}{}}^{r-2} 
S_{e_{11}}^{t_1}S_{e_{13}}S_{e_{13}}^*{S_{e_{11}}^{t_1}}^*  
+\sum_{t_1=0}^{r-2} 
S_{e_{11}}^{t_1}S_{e_{12}}S_{e_{12}}^*{S_{e_{11}}^{t_1}}^* = P_{v_1}
\end{aligned}
$$
and we have proved that the image satisfies \eqref{relation33}. The $*$-homomorphism is injective by a similar argument as in the proof of Proposition \ref{PropGenerators3}. 
\end{proof}
\end{proposition}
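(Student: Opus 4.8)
The plan is to follow the strategy used for Proposition \ref{PropGenerators3}. First I would define the candidate map $\phi$ on the generators of $C^*(L_5^{r;(m_1,m_2,m_3)})$ exactly as stated: sending each vertex projection $P_{w_i}$ to $P_{v_i}$, each loop $S_{f_i}$ to $S_{e_{ii}^r}$, and each edge $S_{g_{ij}^k}$ to the corresponding partial isometry $S_\alpha$ attached to a path in $\mathcal{A}(r;(m_i,m_j))$, with the $S_{g_{13}^k}$ being sent to one of the $S_{\alpha_{t_1,t_3}}$ with $\alpha_{t_1,t_3}\in\mathcal{A}(r;(m_1,m_3))$ or one of the $S_{\alpha_{t_1,t_2,t_3}}$ with $\alpha_{t_1,t_2,t_3}\in\mathcal{A}(r;(m_1,m_2,m_3))$. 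To see that $\phi$ extends to a $*$-homomorphism it suffices to check that these images satisfy the Cuntz--Krieger relations \eqref{relation11}--\eqref{relation33} for $L_5^{r;\underline m}$; surjectivity onto $C^*(\{S_{e_{ii}^r},S_\alpha\})$ is then immediate.

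The relations in \eqref{relation11} are formal, since for any finite path $\alpha$ one has $S_\alpha^* S_\alpha = P_{r(\alpha)}$ and $S_\alpha S_\alpha^*\le P_{s(\alpha)}$, and the generators have mutually orthogonal ranges. The relation \eqref{relation22} at $w_2$ involves only the sub-block governed by the weights $(m_2,m_3)$ and is verified by the same telescoping computation as in Proposition \ref{PropGenerators3}, using that $v_3$ is a sink.

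The essential content is the relation \eqref{relation33} at $w_1$, which asks that $P_{v_1}$ equal the sum \eqref{SumOverPaths} of the range projections of all the relevant paths out of $v_1$. My approach is a \emph{nested} telescoping. I would fix $t_1$ together with the unique $t_2\equiv -m_2^{-1}m_1(t_1+1)\pmod r$ making $\alpha_{t_1,t_2}\in\mathcal{A}(r;(m_1,m_2))$, and then examine the intermediate vertex $v_2$. The two-step path contributes $S_{e_{22}^{t_2}}S_{e_{22}^{t_2}}^*$, while the genuine three-step paths, namely those with $t_2'+t_2''\ge r$, contribute exactly the terms $S_{e_{22}^{t_2+k}}S_{e_{23}}S_{e_{23}}^*S_{e_{22}^{t_2+k}}^*$ (exponents modulo $r$), where the constraint $t_2'+t_2''\ge r$ pins down the admissible range of the index $k$ (equivalently of $t_3$). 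The point is that these combine, via the dimension-$3$ telescoping identity applied to the pair $(m_2,m_3)$, to the full projection $P_{v_2}$. This collapses the $v_1$-relation to one involving only the direct loop $S_{e_{11}^r}$, the $(1,2)$-edge $e_{12}$ now carrying all of $P_{v_2}$, and the $(1,3)$-edge $e_{13}$, which telescopes down to $P_{v_1}$ precisely as in Proposition \ref{PropGenerators3}. The main obstacle is the bookkeeping: one must verify that the admissibility condition $t_2'+t_2''\ge r$ selects exactly the residues $t_2+k$ needed to complete $P_{v_2}$, so that no three-step path which is secretly a product of shorter generators is counted, and none is omitted.

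Finally, for injectivity I would argue as in Proposition \ref{PropGenerators3}. Since $f_3$ corresponds to the vertex-simple loop $e_{33}$ at the sink $v_3$, which has no exit, the ordinary Cuntz--Krieger uniqueness theorem does not apply, so I would instead invoke the generalized uniqueness theorem of \cite{ws:gckut}. As $\phi(P_{w_i})=P_{v_i}\ne 0$ for all $i$, injectivity reduces to checking that the spectrum of $\phi(S_{f_3})=S_{e_{33}^r}$ contains the whole unit circle, which holds by the computation in \cite{akdpir:ckadg} exactly as in dimension $3$. Combining this with the isomorphism $C^*(L_5)^{\rho_{\underline m}^r}\cong C^*(L_5^{r;\underline m})$ from \cite{jhhws:qlsga} yields that the elements in \eqref{generators1} generate the fixed point algebra.
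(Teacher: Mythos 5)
Your proposal is correct and follows essentially the same route as the paper's proof: verifying the Cuntz--Krieger relations for $L_5^{r;\underline{m}}$, with the key step being the nested telescoping at $v_1$ in which, for each fixed $t_1$, the two-step path through $v_2$ and the admissible three-step paths (those with $t_2'+t_2''\geq r$) combine to $P_{v_2}$, reducing the computation to the dimension-$3$ case. Your injectivity argument via the generalized Cuntz--Krieger uniqueness theorem applied to the spectrum of $S_{e_{33}^r}$ is exactly the argument the paper invokes by reference to Proposition \ref{PropGenerators3}.
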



In a similar way as for dimension 3 we can immediately construct an explicit isomorphism between $C^*(L_5)^{\rho_{\underline{m}}^r}$ and $C^*(L_5)^{\rho_{\underline{n}}^r}$ preserving the circle action if 
\begin{equation}\label{Criterion1}
\newSS{(m_2,m_{3})}=\newSS{(n_2,n_{3})}, \ \ \newSStilde{(m_1,m_{2})}=\newSStilde{(n_1,n_{2})}
\end{equation}
and 
\begin{equation}\label{Criterion2}
\newSS{(m_1,m_3)}\cup \newSS{(m_1,m_2,m_3)}=\newSS{(n_1,n_3)}\cup \newSS{(n_1,n_2,n_3)}.
\end{equation}
Note that we require $\newSStilde{(m_1,m_{2})}=\newSStilde{(n_1,n_{2})}$ since the condition  $\newSS{(m_1,m_{2})}=\newSS{(n_1,n_{2})}$ might not be sufficient in order to construct the equivariant isomorphism by mapping generators to generators like we did for dimension $3$ since $S_{e_{22}}$ is not normal. Hence we cannot immediately replace Condition \ref{Criterion1} and \ref{Criterion2} by  $\newSS{\underline{m}}=\newSS{\underline{n}}$. We will show that this is indeed possible when $r$ is prime, see Proposition \ref{Prop:IsomorphismDim5}. 
\begin{notation}\label{Notation1}
To be able to describe the isomorphism we will use the following notation. We denote the vertices and edges in $L_5$ by $v_i,i=1,2,3, e_{ij}, 1\leq i\leq j\leq 3$ when considering $C^*(L_5)^{\rho_{\underline{m}}^r}$ and $w_i,i=1,2,3, f_{ij}, 1\leq i\leq j\leq 3$ when considering $C^*(L_5)^{\rho_{\underline{n}}^r}$. The generators described in Proposition \ref{PropGenerators5} for $C^*(L_5)^{\rho_{\underline{m}}^r}$ and $C^*(L_5)^{\rho_{\underline{n}}^r}$ will be denoted by $S_{\alpha_{t_i,t_j}}, S_{\alpha_{t_1,t_2,t_3}}$ and $S_{\beta_{t_i,t_j}}, S_{\beta_{t_1,t_2,t_3}}$ respectively.
\end{notation}


From now on we will consider the case were $r$ is an odd prime. Note that we do not have to deal with $r=2$ since there only exists one quantum lens space in that case. 

We have  
$$
\gcd(m_i-m_k,r)=\begin{cases}
1, & m_i\neq m_j \\
r, & m_i=m_j
\end{cases}
$$
and
$$\newSS{(m_i,m_j)}=\begin{cases}
\{0,1,....,r-1\}, & m_i\neq m_j
\\
\{\overbrace{0,\dots,0}^r\}, & m_i=m_j
\end{cases}
$$ 

In order to prove Proposition \ref{Prop:IsomorphismDim5} we need Lemma \ref{Lemma:5dim} which we illustrate in Example \ref{Ex:5dim}. 
Note that $\newSStilde{(m_1,m_{1})}=\newSStilde{(n_1,n_{1})}$, hence we exclude this case.

Assume that the set of weights takes the form $\underline{m}=(1,m_2,m_3), \underline{n}=(1,n_2,n_3)$ with $m_2,n_2\neq 1$. Let $m:=m_2$ and $n:=n_2$.  For $i\in \{0,1,\dots,r-2\}$ let $t_2,t_2'\in\{0,1,2,\dots,r-1\}$ be such that
$$
1+i+mt_2\equiv 0 \pmod{r}, \ \ \ 1+i+nt_2'\equiv 0 \pmod{r}
$$
Let $X_i:=|\alpha_{i,t_2}|$ and $Y_i:=|\beta_{i,t_2'}|$. Note that $X_{r-1}=Y_{r-1}=r.$
\begin{example}\label{Ex:5dim}
Let $r=7$ then $X_i$ is as given in Table \ref{Table:Xi}. Consider $m=5$ and $n=6$, then the numbers in Table \ref{Table:Xi} that are different are marked in red and the ones that agree in blue. For the two sets of numbers it holds that: 
\begin{enumerate}
    \item $X_i+Y_i\equiv 0 \pmod{r}$,
    \item $X_2=Y_3$, $X_3=Y_2$,
    \item $X_0+r=Y_5, Y_0+r=X_5$,
    \item $X_1+r=Y_4, Y_1+r=X_4$.
\end{enumerate}
\begin{table}[H]
    \centering
    \begin{tabular}{c|c c c c c c c}
         $(m,i)$ & $0$ & $1$ & $2$ & $3$ & $4$ & $5$ & $6$ \\
         \hline
        $2$ & 4 & 8 & 5 & 9 & 6 & 10 & 7\\
        $3$ & 3 & 6 & 9 & 5 & 8 & 11 & 7\\
        $4$ & 6 & 5 & 4 & 10 & 9 & 8 & 7\\
        $5$ & {\color{red} 5} & {\color{red} 3} & {\color{cyan}8} & {\color{cyan}6} & {\color{red} 11} & {\color{red} 9} & {\color{cyan}7}\\
        $6$ & {\color{red} 2} & {\color{red} 4} & {\color{cyan}6} & {\color{cyan}8} & {\color{red} 10} & {\color{red} 12} & {\color{cyan}7}\\
    \end{tabular}
    \caption{The entries is the number $X_i$.}
    \label{Table:Xi}
\end{table}
Note that $(1)$ is not true for all choices of $m$ and $n$ e.g. $m=3$ and $m=6$. On the other hand $(1)$ is true for $m=2$ and $n=3$. Also note that the numbers are the same for $m$ and its multiplicative inverse in $\Zr$.
\end{example}

We will in the proof of Theorem \ref{Prop:IsomorphismDim5} see that if two sets of numbers satisfy similar conditions as in $(2)-(4)$ then we can change the condition $\newSStilde{(m_1,m_{2})}=\newSStilde{(n_1,n_{2})}$ to $\newSS{(m_1,m_{2})}=\newSS{(n_1,n_{2})}$. The crucial observation is that condition $(1)$ is satisfied precisely when $n=(2-m^{1})^{-1} \mod{r}$. Indeed, assume that 
$$
X_i+Y_i=(1+i+t_2)+(1+i+t_2')\equiv 0\pmod{r}.
$$
We have 
$$
\begin{aligned}
(1+i+t_2)+(1+i+t_2')&\equiv
2+2i-(m^{-1}+n^{-1})(1+i) \\
&\equiv (2-m^{-1}-n^{-1})(1+i) \pmod{r}
\end{aligned}
$$
and therefore $n=(2-m^{-1})^{-1}\pmod{r}$.

In Lemma \ref{Lemma:5dim} we will show that condition $(1)$ i.e. $n=(2-m^{-1})^{-1}\mod{r}$ in general implies that the numbers behave as in $(2)-(4)$.

Note moreover that if $n=m^{-1}$ then $\newSStilde{(1,m)}=\newSStilde{(1,n)}$. Indeed assume that $a\equiv -m^{-1}(i+1) \pmod{r}$ and $0<a<r$ then  
$$
X_i=1+i+a.
$$
For $j\in {1,\dots,r-1}$ such that $(1+j)=a$ we have
$$ 
Y_{j}\equiv 1+j-m(j+1)\equiv a-m(-m^{-1}(i+1))\equiv a+i+1 \pmod{r}.
$$
Since $i+1<r$ we obtain $Y_j=X_i$ for this choice of $j$. Hence for any $X_i$ we can find a $Y_j$ such that $X_i=Y_j$.

\begin{lemma}\label{Lemma:5dim}
Let $m\in\mathbb{N}\setminus\{1\}$, $m\neq 2^{-1} \mod{r}$ and $n:=(2-m^{-1})^{-1} \mod{r}$. For each $i\in \{0,1,\dots,r-2\}$, one of the following holds: 
\begin{enumerate}
    \item $X_i=Y_{r-2-i}$ and $Y_i=X_{r-2-i}$
    \item $X_i=Y_{r-2-i}-r$ and $Y_i=X_{r-2-i}-r$
    \item $X_i=Y_{r-2-i}+r$ and $Y_i=X_{r-2-i}+r$
\end{enumerate}
\end{lemma}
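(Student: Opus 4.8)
The plan is to reduce the whole statement to explicit arithmetic in $\ZZ/r$ and then track a single integer ``carry'' that distinguishes the three cases. First I would introduce $s:=1+i$, so that $s$ ranges over $\{1,\dots,r-1\}$ as $i$ ranges over $\{0,\dots,r-2\}$, and set
\[
u:=t_2=(-m^{-1}s)\bmod r,\qquad v:=t_2'=(-n^{-1}s)\bmod r,
\]
both lying in $\{1,\dots,r-1\}$ (they are nonzero because $s\not\equiv 0\pmod r$ and $m,n\in\Zru$; here $m,n$ are units since $r$ is prime). With this notation $X_i=s+u$ and $Y_i=s+v$. The hypothesis $m\neq 2^{-1}$ is exactly what makes $n=(2-m^{-1})^{-1}$ well defined, and the defining relation for $n$ is equivalent to the symmetric identity $m^{-1}+n^{-1}\equiv 2\pmod r$, which drives the entire argument.

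Next I would compute the two ``reflected'' quantities. Since $1+(r-2-i)=r-s$, the value of $t_2$ attached to the index $r-2-i$ for the weight $m$ is $(-m^{-1}(r-s))\bmod r=(m^{-1}s)\bmod r$, and using $m^{-1}s\equiv 2s-n^{-1}s\equiv 2s+v\pmod r$ this equals $(2s+v)\bmod r$; symmetrically the reflected $t_2'$ for $n$ equals $(2s+u)\bmod r$. Writing $k:=\lfloor(2s+u)/r\rfloor$ and $k':=\lfloor(2s+v)/r\rfloor$, a one-line computation then yields
\[
Y_{r-2-i}=(r-s)+\big((2s+u)\bmod r\big)=X_i+(1-k)r,\qquad
X_{r-2-i}=Y_i+(1-k')r.
\]
Thus the two equalities appearing in each of the three cases of the lemma are precisely the statement that $k=k'\in\{0,1,2\}$, the coefficient $1-k$ being $+1$, $0$, $-1$ respectively.

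The crux is therefore to prove that the two carries coincide. For this I would use $u+v\equiv -2s\pmod r$ (again from $m^{-1}+n^{-1}\equiv 2$), so that $2s+u+v=\ell r$ for a single integer $\ell$; since $4\le 2s+u+v\le 4r-4$ we have $\ell\in\{1,2,3\}$. From $2s+u=\ell r-v$ with $0<v<r$ one reads off $\lfloor(2s+u)/r\rfloor=\ell-1$, and symmetrically $\lfloor(2s+v)/r\rfloor=\ell-1$, whence $k=k'=\ell-1$. This is the step I expect to be the main obstacle, and it is exactly where the exclusion $s\not\equiv 0\pmod r$ is needed: it guarantees $u,v\neq 0$, so that neither floor ``jumps'', which is what forces the $\pm r$ discrepancies to occur in lockstep rather than independently.

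Finally I would translate $k=k'=\ell-1$ back through the displayed identities: $\ell=2$ gives $X_i=Y_{r-2-i}$ and $Y_i=X_{r-2-i}$, which is case~(1); $\ell=1$ gives the $-r$ alternative, case~(2); and $\ell=3$ gives the $+r$ alternative, case~(3). This exhausts the possibilities and completes the proof. The explicit numbers in Example~\ref{Ex:5dim} furnish a convenient check of each step, and in particular confirm the synchronization $k=k'$ that the argument hinges on.
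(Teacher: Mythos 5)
Your proof is correct and takes essentially the same route as the paper's: both arguments reduce the lemma to explicit arithmetic with representatives in $\{1,\dots,r-1\}$, exploit the identity $m^{-1}+n^{-1}\equiv 2\pmod{r}$, and distinguish the three cases by which multiple of $r$ the relevant sum falls under (your carry $k=\lfloor(2s+u)/r\rfloor$ is exactly the paper's trichotomy $a+2(i+1)<r$, $r<a+2(i+1)<2r$, $2r<a+2(i+1)$). Your one streamlining is the single relation $2s+u+v=\ell r$ forcing $k=k'=\ell-1$ at once, whereas the paper verifies the companion identity $Y_i$ versus $X_{r-2-i}$ separately within each of its three cases.
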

\begin{proof}
We have 
$$
X_i=1+i+t_2\equiv 1+i-m^{-1}(i+1) \equiv (1+i)(1-m^{-1}) \pmod{r} 
$$
$$
Y_j=1+j+t_2'\equiv 1+j-(2-m^{-1})(j+1)\equiv (1+j)(m^{-1}-1) \pmod{r}.
$$
Assume $X_j=Y_j$ then 
$$
\begin{aligned}
&(1+i)(1-m_2^{-1})\equiv -(1+j)(1-m_2^{-1}) \pmod{r} \\
&\Rightarrow 1+i\equiv -1-j \pmod{r} \\
&\Rightarrow i+j\equiv -2 \pmod{r}. 
\end{aligned}
$$
Since $0<i+j\leq 2r-4$ we must have $i+j=r-2$ and we obtain
$$
X_i\equiv Y_j \pmod{r} \Leftrightarrow j=r-2-i. 
$$
We have $t_2\equiv -m_2^{-1}(i+1)\pmod{r}$ but since we will no longer calculate modulo $r$ we assume that $a\equiv -m^{-1}(i+1) \pmod{r}$ and $0<a<r$. Then 
$$
X_i=i+1+a.
$$
Since
$$
\begin{aligned}
Y_{r-2-i}&\equiv r-(i+1)+(m^{-1}-2)(r-(i+1)) \pmod{r} \\
&\equiv r-(i+1)+2(i+1)-m^{-1}(i+1) \pmod{r}
\end{aligned}
$$
and $0<a+2(i+1)<3r-1$
we obtain
$$
Y_{r-2-i}=
\begin{cases}
r-(i+1)+a+2(i+1), & a+2(i+1)<r \\
r-(i+1)+a+2(i+1)-r, & r<a+2(i+1)<2r \\
r-(i+1)+a+2(i+1)-2r, & 2r<a+2(i+1) \\
\end{cases}
$$
Then 
$$
Y_{r-2-i}-X_i=
\begin{cases}
r, & a+2(i+1)<r \\
0, & r<a+2(i+1)<2r \\
-r, & 2r<a+2(i+1) \\
\end{cases}
$$
Assume $X_i=Y_{r-2-i}$ as in $(1)$ then we must have $r<a+2(i+1)<2r$. Moreover, since 
$$
\begin{aligned}
m^{-1}(i+1)-2(i+1)&\equiv r-(a+2(i+1)-r)=2r-(a+2(i+1)) \pmod{r}, 
\\
-m^{-1}(r-2-i+1)&\equiv m^{-1}(i+1)\equiv r-a\pmod{r}
\end{aligned}
$$
we obtain 
$$
\begin{aligned}
Y_i&=(1+i)+2r-(a+2(i+1))=2r-a-(i+1)
\\
X_{r-2-i}&=1+(r-2-i)+r-a=2r-a-(i+1). 
\end{aligned}
$$
Hence $Y_i=X_{r-2-i}$ which proves $(1)$. \\
\vspace{0.1cm}
Let now $X_i=Y_{r-2-i}-r$ as in $(2)$ then $a+2(i+1)<r $ and $Y_i= r-a-(i+1)$ hence 
$$
\begin{aligned} 
Y_i-X_{r-2-i}=-r
\end{aligned}
$$
which proves $(2)$. Similarly assume that $X_i=Y_{r-2-i}+r$ then $2r<a+2(i+1)$ hence $Y_i=3r-a-(i+1)$ and $Y_i-X_{r-2-i}=r$ which proves $(3)$. 
\end{proof}

\begin{proposition}\label{Prop:IsomorphismDim5}
Let $r$ be prime and $\underline{m}=(1,m_2,m_3), \underline{n}=(1,n_2,n_3)$ in $\mathbb{N}^3$ with $\gcd(m_i,r)=\gcd(n_i,r)=1$ for $i=2,3$. Assume $\newSS{\underline{m}}=\newSS{\underline{n}}$
then there exists an equivariant isomorphism from $C(L_5)^{\rho_{\underline{m}}^r}$ to $C(L_5)^{\rho_{\underline{n}}^r}$. 
\end{proposition}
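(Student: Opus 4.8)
The plan is to present both fixed point algebras by the generators of Proposition \ref{PropGenerators5} and to define an equivariant map $\Phi\colon C^*(L_5)^{\rho_{\underline{m}}^r}\to C^*(L_5)^{\rho_{\underline{n}}^r}$ on these generators, organised by source and range. Since by that proposition the domain is (equivariantly) the graph algebra $C^*(L_5^{r;\underline{m}})$, it suffices to send the $\underline{m}$-generators to elements of the codomain that carry the same gauge weight, so that $\Phi$ is automatically equivariant, and that satisfy the Cuntz--Krieger relations \eqref{relation11}--\eqref{relation33} of $L_5^{r;\underline{m}}$; then $\Phi$ is a well-defined $*$-homomorphism, visibly surjective, and injective by the generalised Cuntz--Krieger uniqueness theorem exactly as in Proposition \ref{PropGenerators5} (the sink loop $S_{f_{33}^r}$ has full spectrum). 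By Lemma \ref{Lemma:simpleiso} we have already normalised $m_1=n_1=1$.

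First I would dispose of the generators whose paths terminate at the sink $v_3$. Since $v_3$ emits only its loop, $S_{f_{33}^r}$ is unitary in $P_{w_3}C^*(L_5)^{\rho_{\underline{n}}^r}P_{w_3}$, so exactly as in Proposition \ref{ExistenceGPisom} I may append $S_{f_{33}^r}^{\pm\ell}$ to adjust gauge weights by multiples of $r$ without disturbing the defining relations. Hence the two consequences $\newSS{(m_2,m_3)}=\newSS{(n_2,n_3)}$ and $\newSS{(m_1,m_3)}\cup\newSS{(m_1,m_2,m_3)}=\newSS{(n_1,n_3)}\cup\newSS{(n_1,n_2,n_3)}$ of $\newSS{\underline{m}}=\newSS{\underline{n}}$ let me match the $v_2\to v_3$ generators and the $v_1\to v_3$ generators (both the two-step and the genuine three-step ones) with codomain generators of the same source, range, and gauge weight.

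The heart of the argument is the family of $v_1\to v_2$ generators $S_{\alpha_{t_1,t_2}}$, and here the proof must work harder: the range vertex $v_2$ is not a sink (it also emits $e_{23}$), so $S_{e_{22}^r}$ is not unitary in its corner and gauge weights cannot be freely corrected there. I would extract from $\newSS{\underline{m}}=\newSS{\underline{n}}$, using that $r$ is prime together with Proposition \ref{PropSetOfNumbers} and the triple condition, the precise relation between $m_2$ and $n_2$, which is one of the controlled cases: either $\newSStilde{(1,m_2)}=\newSStilde{(1,n_2)}$ (the case $n_2=m_2^{-1}$), in which the actual lengths already coincide and the $v_1\to v_2$ generators are matched directly, or the case $n_2=(2-m_2^{-1})^{-1}$ in which condition $(1)$ holds. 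In the latter case I invoke Lemma \ref{Lemma:5dim}: it pairs the index $i$ with $r-2-i$ and shows $Y_{r-2-i}-X_i\in\{0,\pm r\}$, so the generator $S_{\alpha_{i,t_2}}$ of length $X_i$ is sent to the forward path obtained from $S_{\beta_{r-2-i,s_2}}$ by absorbing the discrepancy $\pm r$ on the \emph{source} side via $S_{f_{11}^r}^{\pm1}$ (legitimate as a product of generators), preserving source $w_1$, range $w_2$, and landing on gauge weight exactly $X_i$, while the length-$r$ generator with $t_1=r-1$ is fixed.

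The step I expect to be the main obstacle is verifying relation \eqref{relation33} for the resulting family: because the source corner at $v_1$ is not governed by a unitary, the images of the $v_1$-emitted generators no longer manifestly have orthogonal range projections summing to $P_{w_1}$, and one must check that the involution $i\leftrightarrow r-2-i$ and the trichotomy $(1)$--$(3)$ of Lemma \ref{Lemma:5dim}, together with the already-matched $v_1\to v_3$ generators, reorganise the sum \eqref{SumOverPaths} back to $P_{v_1}$. Once this combinatorial identity is established, equivariance holds by construction and injectivity and surjectivity follow as above, completing the proof.
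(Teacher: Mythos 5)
Your overall framework (generator-to-generator maps via Proposition \ref{PropGenerators5}, weight-matching for equivariance, free adjustment by the sink loop at $v_3$, injectivity via the generalised Cuntz--Krieger uniqueness theorem) is the same as the paper's, but there is a genuine gap at the heart of your argument: you claim that $\newSS{\mm}=\newSS{\nn}$ together with $r$ prime pins $n_2$ down to one of the ``controlled cases'' $n_2=m_2^{\pm1}$ or $n_2=(2-m_2^{-1})^{-1}$. This is false. For $r$ prime, Proposition \ref{prop:explicit_iso} shows the hypothesis holds for \emph{any} two weight vectors with three mutually distinct entries, so $n_2$ can be an arbitrary unit: for $r=11$, $m_2=2$ one has $\{m_2^{-1},(2-m_2^{-1})^{-1}\}=\{6,8\}$, yet $\nn=(1,5,7)$ satisfies $\newSS{\mm}=\newSS{\nn}$ with $\mm=(1,2,3)$. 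Lemma \ref{Lemma:5dim} only produces an isomorphism for the single pair $n_2=(2-m_2^{-1})^{-1}$, and the paper bridges from there to arbitrary $n_2$ by a composition argument your proposal omits entirely: alternate the ``easy'' move $(1,m,\cdot)\simeq_{\gamma}(1,m^{-1},\cdot)$ (from $\newSStilde{(1,m)}=\newSStilde{(1,m^{-1})}$) with the constructed move $m\mapsto(2-m^{-1})^{-1}$, chaining $(1,2^{-1},\cdot)\simeq_{\gamma}(1,2,\cdot)\simeq_{\gamma}(1,(2-2^{-1})^{-1},\cdot)\simeq_{\gamma}\cdots\simeq_{\gamma}(1,r-1,\cdot)$ until every unit is reached. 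Without this iteration your construction proves strictly less than the proposition asserts.

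Two further points where your sketch misstates the mechanism. First, ``absorbing the discrepancy $\pm r$ on the source side via $S_{f_{11}^r}^{\pm1}$'' is illegitimate for the minus sign: as you yourself observe, $w_1$ is not a sink, so $S_{f_{11}^r}$ is a proper partial isometry with no inverse, and prepending its adjoint destroys the path structure. The paper only ever prepends \emph{positive} powers, on whichever algebra's generator has the smaller weight, exploiting the symmetry of Lemma \ref{Lemma:5dim} ($Y_{r-2-i}=X_i+r$ forces $X_{r-2-i}=Y_i+r$) so that the modification happens at the \emph{same} index $i$ in both fixed point algebras. Second, the verification of \eqref{relation33} that you flag as the main obstacle and leave open is exactly where this same-index property does the work: after replacing $S_{\alpha_{i,t_2}}$ by $S_{e_{11}}^rS_{\alpha_{i,t_2}}$ (and compensating the three-step generators by $S_{e_{11}}^rS_{\alpha_{i,l_2,l_3}}{S_{e_{33}}^r}^*$, harmless since $S_{e_{33}}$ is normal), the sum \eqref{SumOverPaths} does \emph{not} reorganise back to $P_{v_1}$; rather it takes an identical modified form in both algebras precisely because the replacements occur at the same indices, and that identity of relations is what makes the generator-matching an isomorphism. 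So your proposal has the right local pairing via Lemma \ref{Lemma:5dim} but is missing the global chaining idea and gets the sign bookkeeping, and hence the relation check, wrong.
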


\begin{proof}
We will use Notation \ref{Notation1} to describe the two fixed point algebra. The main idea is to change some specific generators by multiplying with $S_{e_{11}}^r$ on $S_{\alpha_{t_1,t_2}}$ and $S_{f_{11}}^r$ on $S_{\beta_{t_1',t_2'}}$. This will be done in such a way that after adding $r$ to some specific numbers in $\newSStilde{(m_1,m_{2})}$ and  $\newSStilde{(n_1,n_{2})}$ (i.e. changing the generator to $S_{e_{11}}^rS_{\alpha_{t_1,t_2}}$ or $S_{f_{11}}^rS_{\beta_{t_1',t_2'}}$ respectively), the two sets equals. 

Changing the generators in this way should be done carefully such that we do not change the relations in the fixed point algebras. First note that $S_{e_{11}}^rS_{\alpha_{t_1,t_2}}$ also satisfies 
$$
\begin{aligned}
&(S_{e_{11}}^rS_{\alpha_{t_1,t_2}})(S_{e_{11}}^rS_{\alpha_{t_1,t_2}})^*\leq P_{v_1}, \\
&(S_{e_{11}}^rS_{\alpha_{t_1,t_2}})^*(S_{e_{11}}^rS_{\alpha_{t_1,t_2}})=P_{v_2}.
\end{aligned}
$$
Therefore we only need to pay attention when summing over all the generators corresponding to paths starting in the first vertex i.e. \eqref{SumOverPaths}. 

The procedure is as follows: Let $i\in\{0,1,\dots,r-1\}$ and assume we change the generator $S_{\alpha_{i,t_2}}$ to $S_{e_{11}}^rS_{\alpha_{i,t_2}}$ then we also want to change the generator $S_{\alpha_{i,l_2,l_3}}$ to $S_{e_{11}}^rS_{\alpha_{i,l_2,l_3}}{S_{e_{33}}^r}^*$. In this way, the result of \eqref{SumOverPaths} becomes: 
$$
\begin{aligned}
S_{e_{11}}^{r-1}{S_{e_{11}}^{r-1}}^*+\sum_{\overset{t_1=0}{}}^{r-2} 
S_{e_{11}}^{t_1}S_{e_{13}}S_{e_{13}}^*{S_{e_{11}}^{t_1}}^*  
+\sum_{t_1=0, t_1\neq i}^{r-2} 
S_{e_{11}}^{t_1}S_{e_{12}}S_{e_{12}}^*{S_{e_{11}}^{t_1}}^* + S_{e_{11}}^rS_{e_{11}}^{i}S_{e_{12}}S_{e_{12}}^*{S_{e_{11}}^{i}}^*{S_{e_{11}}^r}^*
\end{aligned}
$$
We then need to show that if we change $S_{\alpha_{i,t_2}}$ to $S_{e_{11}}^rS_{\alpha_{i,t_2}}$ in $C(L_5)^{\rho_{\underline{m}}^r}$ for some $i$ then we also need to change $S_{\beta_{i,t_2'}}$ to $S_{e_{11}}^rS_{\beta_{i,t_2'}}$ in $C(L_5)^{\rho_{\underline{n}}^r}$ for the same $i$. In this way \eqref{SumOverPaths} becomes the same in the two fixed point algebras. Note that it is always possible to change the corresponding $S_{\alpha_{i,l_2,l_3}}$ ($S_{\beta_{i,l_2',l_3'}}$) since $S_{e_{33}}$ ($S_{f_{33}}$) is normal. 

Assume $m\neq 2^{-1} \mod{r}$. We will first show that if $n_2:=(2-m_2^{-1})^{-1}\mod{r}$ then we can construct an equivariant isomorphism directly. Afterwards we will see how to find an equivariant isomorphism for any $n_2$. 

Assume that $X_i\in \newSStilde{(1,m_{2})}$ with $X_i<r$ and $X_i\notin \newSStilde{(1,n_{2})}$. Then $X_i+r\in \newSStilde{(1,n_{2})}$. For a $j\in\{0,1,\dots,r-2\}$ let $Y_j=X_i+r$. By Lemma \ref{Lemma:5dim} it follows that $j=r-2-i$. Hence we fall into $(3)$ of Lemma \ref{Lemma:5dim} and therefore we have
$$
Y_{r-2-i}=X_i+r \ \text{and} \ X_{r-2-i}=Y_i+r.
$$
We can then change $S_{\alpha_{i,t_2}}$ to $S_{e_{11}}^rS_{\alpha_{i,t_2}}$ in $C(L_5)^{\rho_{\underline{m}}^r}$ and $S_{\beta_{i,t_2'}}$ to $S_{e_{11}}^rS_{\beta_{i,t_2'}}$ in $C(L_5)^{\rho_{\underline{n}}^r}$. Since this happens for the same $i$ we can construct the equivariant isomorphism as described earlier. On the other hand assume that $X_i\in \newSStilde{(1,m_{2})}$ with $X_i>r$ and $X_i\notin \newSStilde{(1,n_{2})}$. Then $X_i-r\in \newSStilde{(1,n_{2})}$. It follows by Lemma \ref{Lemma:5dim} case $(3)$ that 
$$
X_i=Y_{r-2-i}+r \ \text{and} \ Y_i=X_{r-2-i}+r.
$$
We can then change $S_{\alpha_{r-2-i,t_2}}$ to $S_{e_{11}}^rS_{\alpha_{r-2-i,t_2}}$ in $C(L_5)^{\rho_{\underline{m}}^r}$ and $S_{\beta_{r-2-i,t_2'}}$ to $S_{e_{11}}^rS_{\beta_{r-2-i,t_2'}}$ in $C(L_5)^{\rho_{\underline{n}}^r}$ for the same index $r-2-i$. 

To show that there exists an equivariant isomorphism for an arbitrary choice of $n_2$ we first let $m_2=2^{-1} \mod{r}$. Since $\newSStilde{(1,m_{2})}=\newSStilde{(1,m_{2}^{-1})}$ we have $(1,2^{-1},m_3)\simeq_{\gamma}(1,2,n_3)$ (see Notation \ref{not:equiviso}). Now we can consider $m=2$ and use the previous result to conclude $(1,2,m_3)
{\color{black}\simeq_{\gamma}} (1,(2-2^{-1})^{-1},n_3)$ and so on. 

On the other hand let $m_2=r-1$ then $m_2^{-1}=r-1$. By the above we have $(1,m_2,m_3)\simeq_{\gamma}(1,n_2,n_3)$ where
$$
n_2\equiv (2-m_2^{-1})^{-1}\equiv (2-(r-1))^{-1}\equiv 3^{-1} \pmod{r}.
$$
Moreover we have $(1,3^{-1},m_3)\simeq_{\gamma}(1,3,n_3)$. By the previous result we obtain $(1,3,m_3)\simeq_{\gamma}(1,(2-3^{-1})^{-1},n_3)$. 
This process is illustrated below where a blue $\simeq_{\gamma}$ indicates the "easy" isomorphism i.e. when $\newSStilde{(1,m_{2})}=\newSStilde{(1,n_2)}$ and a red one indicates the isomorphism constructed in this proof.
$$
\begin{aligned}
(1,2^{-1},\cdot)&{\color{blue}\simeq_{\gamma}} (1,2,\cdot)
{\color{red}\simeq_{\gamma}} (1,(2-2^{-1})^{-1},\cdot)
{\color{blue}\simeq_{\gamma}} (1,(2-2^{-1}),\cdot)
{\color{red}\simeq_{\gamma}} (1,(2-(2-2^{-1})^{-1})^{-1},\cdot)
\\
&{\color{blue}\simeq_{\gamma}} (1,2-(2-2^{-1})^{-1},\cdot){\color{red}\simeq_{\gamma}} \cdots\cdots {\color{blue}\simeq_{\gamma}} (1,(2-3^{-1})^{-1},\cdot)
{\color{red}\simeq_{\gamma}}
(1,3,\cdot) \\
&{\color{blue}\simeq_{\gamma}} (1,3^{-1},\cdot)
{\color{red}\simeq_{\gamma}} (1,r-1,\cdot).
\end{aligned}
$$
Continuing this process and combining the isomorphisms we end up with an equivariant isomorphism between any two set of weigths $(1,m_2,m_3)$ and $(1,n_2,n_3)$ satisfying the assumptions of the theorem. 
\end{proof}

\begin{proposition}\label{prop:explicit_iso}
Let $r$ be a prime number and $\underline{m},\underline{n}\in \Nb^3$  with $\gcd(m_i,r)=\gcd(n_i,r)=1$ for $i=1,2,3$ for which
\begin{equation}\label{3DifferentWeights}
|\{m_1,m_2,m_3\}|=|\{n_1,n_2,n_3\}|=3,
\end{equation}
then we obtain $\newSS{\underline{m}}=\newSS{\underline{n}}$. 
\begin{proof}
First note that since $r$ is prime we have 
$$\begin{aligned}
\newSS{(m_i,m_{i+1})}=\newSS{(n_i,n_{i+1})}, i=1,2, \ \ \ 
\newSS{(m_1,m_3)}=\newSS{(n_1,n_3)}
\end{aligned}
$$
Hence we only need to show that $\newSS{(m_1,m_2,m_3)}=\newSS{(n_1,n_2,n_3)}$. 

Let the numbers $t_1$ and $t_1'$ depend on $t_2',t_2''\in \{0,1,\dots,r-1\}$ respectively as in \eqref{tandk}. Then
$$
\begin{aligned}
t_1+t_2+t_3+2&\equiv t_1+t_3+t_2'+t_2''+2 \pmod{r} \\
&\equiv -m_2m_1^{-1}t_2'-1-m_2m_3^{-1}(t_2''+1) + t_2'+t_2''+2 \pmod{r}  \\
&\equiv t_2'(1-m_2m_1^{-1})+(t_2''+1)(1-m_2m_3^{-1}) \pmod{r}. 
\end{aligned}
$$
Then the numbers in $\newSS{(m_1,m_2,m_3)}$ will be the $t_1+t_3+t_2'+t_2''+2$ for which $t_2'+t_2''\geq r$. Here it is an advantage to think about the numbers $t_1+t_3+t_2'+t_2''+2$ as the entries of a $r\times r$ matrix, where the rows are indexed by $t_2'$ and the columns by $t_2''$. Then the entries below the antidiagonal will precisely be the elements in $\newSS{(m_1,m_2,m_3)}$, hence it consists of $\frac{r(r-1)}{2}$ numbers. 

Since $(1-m_2m_3^{-1})$ is a unit, we choose to consider the numbers $(1-m_2m_3^{-1})^{-1}(t-1+t_2+t_3+2)$ instead of $t_1+t_2+t_3+2$. If we can show that the multiset 
$$\{(1-m_2m_3^{-1})^{-1}(t_1+t_2+t_3+2), t_1+t_2+t_3+2\in \newSS{(m_1,m_2,m_3)}\}$$
is the same for any set of weights under the given condition, then $\newSS{(m_1,m_2,m_3)}$ would also be the same. Let $k:=(1-m_2m_1^{-1})(1-m_2m_3^{-1})^{-1}$, then we have to consider the numbers $t_2'k+1+t_2''$. For each $x=1,\dots,r-1$, if $t_2''=r-x$ we can choose $t_2'\in \{x,\dots,r-1\}$, hence we have to consider the following numbers: 
$$
\{t_2'k+1-x \mod{r}\lvert \hspace{0.2cm} x\in \{1,\dots,r-1\}, t_2'\in \{x,\dots,r-1\}\}. 
$$
To get a better idea of the numbers we consider Table \ref{TheNumbers} which indicates the number $t_2'k+1-x$ for any combination of $x,t_2'\in \{1,\dots,r-1\}$. The numbers in $\newSS{(m_1,m_2,m_3)}$ are precisely the ones, calculated modulo $r$, marked with blue in Table \ref{TheNumbers}. 
\begin{table}[H]
    \centering
    \begin{tabular}{c|c c c c c}
         $(x,t_2')$ & $1$ & $2$ & $3$ & $\cdots$ & r-1  \\
         \hline
        $r-1$ & $k-(r-2)$ & $2k-(r-2)$ & $3k-(r-2)$ &$\cdots$ & \color{cyan}{$(r-1)k-(r-2)$}\\
        $\vdots$ &   $\vdots$  &  $\vdots$  & &  \color{cyan}{$\vdots$}   \\
        $3$ & $k-2$ & $2k-2$& \color{cyan}{$3k-2$} & \color{cyan}{$\cdots$} & \color{cyan}{$(r-1)k-2$} 
        \\
        $2$ & $k-1$ & \color{cyan}{$2k-1$} & \color{cyan}{$3k-1$} & \color{cyan}{$\cdots$} & \color{cyan}{$(r-1)k-1$}  \\
        $1$ & \color{cyan}{$k$} & \color{cyan}{$2k$} & \color{cyan}{$3k$} &  \color{cyan}{$\cdots$} & \color{cyan}{$(r-1)k$} 
    \end{tabular}
    \caption{The entries indicates the number $t_2'k+1-x$.}
    \label{TheNumbers}
\end{table}
It is clear that the numbers $0,1,\dots,r-1$ appears precisely one time in each row and column of Table \ref{TheNumbers}. Moreover we have that a number only appears ones on each antidiagonal and diagonal. Indeed, first note that the elements on the same antidiagonal takes the form $yk-(y-b)$ for a fixed $b\in \Nb$. Assume now for $y\neq z$ that $yk-(y-b)\equiv zk-(z-b) \Mod{r}$, then $y(k-1)\equiv z(k-1)\Mod{r}$ but since $r$ is prime we obtain $y\equiv z \Mod{r}$ i.e. $y=z$. We can show a similar results for each diagonal by noting that elements on the same diagonal takes the form  $yk+(y-b)$ for a $b\in \Nb$.
Since each number appears precisely ones in each row and column and a number only can appears one time in each diagonal and antidiagonal, each of the numbers $0,\dots,r-1$ appears precisely $\frac{r-1}{2}$-times above and below the antidiagonal. Then $\newSS{(m_1,m_2,m_3)}$ consists of each of the numbers $0,\dots,r-1$, $\frac{r-1}{2}$-times. Hence $\newSS{(m_1,m_2,m_3)}$ is the same for any choice of weights under our assumption. 
\end{proof}

\end{proposition}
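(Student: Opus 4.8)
The plan is to reduce the multiset identity $\newSS{\underline{m}}=\newSS{\underline{n}}$ of Notation~\ref{WreducedTheSame} to a single statement about triple paths and then to settle that statement by an explicit count on the array of Table~\ref{TheNumbers}. For $k+1=3$ the pairs $(s,t)$ with $s<t$ are $(1,2)$, $(2,3)$ and $(1,3)$; the first two involve only the pairwise multisets $\newSS{(m_1,m_2)}$ and $\newSS{(m_2,m_3)}$, while $(1,3)$ is the multiset union $\newSS{(m_1,m_3)}\cup\newSS{(m_1,m_2,m_3)}$. First I would use that $r$ is prime and the three weights are pairwise distinct modulo $r$, so each difference $m_i-m_j$ is a unit and $\gcd(m_i-m_j,r)=1$. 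By Proposition~\ref{PropSetOfNumbers} this gives $\newSS{(m_i,m_j)}=\{0,1,\dots,r-1\}=\newSS{(n_i,n_j)}$ for every $i<j$. This disposes of $(1,2)$ and $(2,3)$ outright, and in $(1,3)$ it makes the pairwise contributions coincide, so the whole statement collapses to the multiset equality $\newSS{(m_1,m_2,m_3)}=\newSS{(n_1,n_2,n_3)}$.

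To prove this triple identity I would follow the parametrisation set up before the statement: writing $t_2\equiv t_2'+t_2''$ with $t_2',t_2''$ satisfying \eqref{tandk}, an admissible triple path is precisely one with $t_2'+t_2''\geq r$, and modulo $r$ its length satisfies
\[
t_1+t_2+t_3+2\equiv t_2'(1-m_2m_1^{-1})+(t_2''+1)(1-m_2m_3^{-1}).
\]
Since $(1-m_2m_3^{-1})$ is a unit, multiplying through by its inverse leaves the multiset structure unchanged; with $k:=(1-m_2m_1^{-1})(1-m_2m_3^{-1})^{-1}$ and $x:=r-t_2''$ the rescaled values become the entries $t_2'k+1-x$ of the triangular admissible region $t_2'\geq x$ inside the $(r-1)\times(r-1)$ array of Table~\ref{TheNumbers}. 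The target is to show that in this region each residue of $\Z/r$ occurs exactly $\tfrac{r-1}{2}$ times, a description that is \emph{manifestly independent of the weights}; applying it verbatim to $\underline{n}$ then forces $\newSS{(m_1,m_2,m_3)}=\newSS{(n_1,n_2,n_3)}$.

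The heart of the argument, and the main obstacle, is exactly this count. I would derive it from the balance properties of the array. Because $k$ is a unit and the maps $t_2'\mapsto t_2'k$ and $x\mapsto -x$ are bijections of $\Z/r$, each residue occurs at most once in every row and at most once in every column. The crucial, primality-dependent input is that each residue also occurs at most once on each diagonal and each antidiagonal: entries on a common antidiagonal have the shape $y(k-1)+b$ and on a common diagonal the shape $y(k+1)-b$ for a fixed $b$, so a repetition would force $y(k\mp1)\equiv z(k\mp1)\pmod r$; since $m_1\neq m_3$ makes $k\neq1$ and $r$ is prime, the relevant factor is invertible and $y=z$. The delicate point is that none of these balance conditions alone pins down the multiplicity: one must combine the Latin-square-type row/column constraint with the diagonal control to conclude that the occurrences of each residue split evenly across the cutting antidiagonal $t_2'+t_2''=r$, yielding exactly $\tfrac{r-1}{2}$ occurrences in the admissible triangle of the $\tfrac{r(r-1)}{2}$ total entries. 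I expect verifying this even split to be the only genuinely nonroutine step, and it is precisely where the hypothesis that $r$ is prime is indispensable.
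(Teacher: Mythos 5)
Your setup tracks the paper's proof almost line for line: the reduction via Proposition \ref{PropSetOfNumbers} to the single triple-multiset identity, the parametrisation through \eqref{tandk}, the rescaling by the unit $(1-m_2m_3^{-1})^{-1}$, and the passage to the array of Table \ref{TheNumbers} with entries $t_2'k+1-x$ on the region $t_2'\geq x$ are exactly the paper's steps. But you stop precisely where the proof has to happen. Once the reduction is made, the \emph{entire} content of the proposition is the claim that each residue of $\ZZ/r$ occurs exactly $\frac{r-1}{2}$ times in the admissible triangle, and for this you offer only ``I would derive it from the balance properties'' and ``I expect verifying this even split to be the only genuinely nonroutine step.'' An expectation is not a proof; as it stands your proposal is a correct reduction plus an unproven combinatorial assertion. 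The paper, terse as it is, does commit to a derivation: it parametrises the antidiagonals ($t_2'-x$ constant, entries $y(k-1)+1+b$) and diagonals ($t_2'+x$ constant, entries $y(k+1)+1-b$) and combines the row/column and diagonal/antidiagonal injectivity to extract the count.

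Moreover, the specific inputs you propose to feed into that count would fail in a case your hypotheses allow. For uniqueness along diagonals you need $k+1$ to be invertible, i.e.\ $k\not\equiv -1 \pmod{r}$; the condition $m_1\neq m_3$ only rules out $k=1$ (it says $1-m_2m_1^{-1}\neq 1-m_2m_3^{-1}$), contrary to your assertion that it makes ``the relevant factor'' invertible in both cases. And $k\equiv -1$, which happens exactly when $m_2(m_1^{-1}+m_3^{-1})\equiv 2 \pmod r$, is realizable with three distinct units: for $r=5$ the weights $\mm=(2,1,4)$ give $k\equiv -1$. There every entry on a fixed diagonal $t_2'+x=b$ equals $1-b$, so ``each residue at most once per diagonal'' is simply false, and any argument leaning on it collapses; the count itself remains true (for $r=5$, $k=4$ each residue occurs exactly twice in the triangle), but it then needs a different verification, e.g.\ directly counting the $t_2'\in\{1,\dots,r-1\}$ with $(kt_2'+1-c)\bmod r\leq t_2'$, or tallying antidiagonal by antidiagonal. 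To be fair, the paper's own write-up asserts diagonal uniqueness without excluding $k\equiv-1$, so here you have reproduced its lacuna rather than introduced one --- but a blind proof must close it, and yours explicitly does not. A small further inaccuracy: the occurrences of a residue $c\neq 1$ do \emph{not} ``split evenly'' across the cutting antidiagonal --- such a residue occurs $r-2$ times in the full $(r-1)\times(r-1)$ array, splitting as $\frac{r-1}{2}$ inside the triangle against $\frac{r-3}{2}$ outside; only the residue $1$ (with $r-1$ occurrences) splits evenly.
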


\section{Dimension 5, $r$ prime, necessity}

 In case $d=5$ we have seen this far that when $\mm=\alpha \nn$ or when $|\{m_1,m_2,m_3\}|=3=|\{n_1,n_2,n_3\}|$, the corresponding quantum lens spaces are equivariantly isomorphic. We now set out to prove that when neither of these conditions hold, the quantum lens spaces fail to be equivariantly isomorphic, by showing that their dimension quadruples are not isomorphic. 
 
 Our strategy for doing so is to first assume, as we may by Lemma \ref{Lemma:simpleiso}, that $m_1=n_3=1$. If $\DQ(r;\mm)\simeq  \DQ(r;\nn)$ via $H$, it must be of the form
 \[
H=\begin{bmatrix}
\perm_{n_1}&0&0\\
Y_{21}&\sh^{\ell_2}\perm_{n_2/m_2}&0\\
Y_{31}&Y_{32}&\sh^{\ell_3}\perm_{1/m_3}
\end{bmatrix} 
\]
where we can extract information about $\Delta(Y_{21})$ and $\Delta(Y_{32})$ by our previous analysis of the case $d=3$. We have that
\[
H
\begin{bmatrix}
\sh&0&0\\
-\sh^{m_2}&\sh^{m_2}&0\\
-\sh^{m_3}&-\sh^{m_3}&\sh^{m_3}
\end{bmatrix}
=
\begin{bmatrix}
\sh^{n_1}&0&0\\
-\sh^{n_2}&\sh^{n_2}&0\\
-\sh&-\sh&\sh
\end{bmatrix}
H,
\]
which at the  $31$ entry becomes
\[
Y_{31}\sh-Y_{32}\sh^{m_2}-\sh^{\ell_3}\perm_{1/m_3}\sh^{m_3}=
-\sh \perm_{n_1}-\sh Y_{21} +\sh Y_{31}
\]
or
\begin{equation}\label{masterfive}
\sh Y_{31}-Y_{31}\sh
=\sh \perm_{n_1}+\sh Y_{21} -
Y_{32}\sh^{m_2}-\sh^{\ell_3+1}\perm_{1/m_3}
\end{equation}
Using Lemma \ref{SYYSZ}(ii) we infer that
\[
\Delta(\sh \perm_{n_1}+\sh Y_{21} -
Y_{32}\sh^{m_2}-\sh^{\ell_3+1}\perm_{1/m_3})=0
\]
from which we shall extract the necessary information.
\\

The following result is  elementary.

\begin{lemma}
With $\gamma:\ZZ/r\times\ZZ/r\to \NN_0$ defined as
\[
\gamma(\alpha,\beta)=\sum_{j=0}^{r-1}\iverson{\alpha j+\beta\leq j}
\]
we get
\[
\gamma(\alpha,\beta)=\begin{cases}r&\alpha=0,\beta=0\\ r-\beta \mod r&\alpha=0, \beta\not=0\\r&\alpha=1,\beta=0\\\beta \mod r&\alpha=1,\beta\not =0\\\dfrac{r+1}{2}&\text{otherwise}\end{cases}
\]
\end{lemma}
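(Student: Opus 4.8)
The plan is to split on the value of $\alpha$, since the four explicit outputs correspond exactly to $\alpha\in\{0,1\}$ while the generic value $\tfrac{r+1}{2}$ belongs to $\alpha\notin\{0,1\}$. For the four easy branches I would evaluate the residue $(\alpha j+\beta)\bmod r$ directly. When $\alpha=0$ the summand is $\iverson{\beta\le j}$, which holds for every $j$ if $\beta=0$ (giving $r$) and otherwise exactly for $j\in\{\beta,\dots,r-1\}$ (giving $r-\beta$). When $\alpha=1$ the summand is $\iverson{(j+\beta\bmod r)\le j}$, which always holds if $\beta=0$, and for $\beta\ne0$ holds precisely when $j+\beta\ge r$, i.e.\ for the $\beta$ indices $j\in\{r-\beta,\dots,r-1\}$. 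This settles the first four cases.

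For the generic case fix $\alpha\notin\{0,1\}$ and write $f(j)=(\alpha j+\beta)\bmod r$, which is a bijection of $\{0,\dots,r-1\}$ since $\gcd(\alpha,r)=1$ for prime $r$. With $A=\#\{j:f(j)<j\}$, $B=\#\{j:f(j)=j\}$, and $C=\#\{j:f(j)>j\}$ we have $\gamma(\alpha,\beta)=A+B$ and $A+B+C=r$. Because $\alpha-1$ is invertible modulo $r$, the fixed-point equation $(\alpha-1)j\equiv-\beta$ has a unique solution $j_0$, so $B=1$ and $A+C=r-1$. The entire problem thus reduces to proving $A=C$, after which $\gamma(\alpha,\beta)=A+1=\tfrac{r-1}{2}+1=\tfrac{r+1}{2}$.

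To show $A=C$ I would use the reflection about the fixed point, $\sigma(j)=(2j_0-j)\bmod r$. A direct check using $(\alpha-1)j_0\equiv-\beta$ gives $f\circ\sigma=\sigma\circ f$, and since $r$ is odd, $\sigma$ is an involution whose only fixed point is $j_0$, so it pairs the $r-1$ non-fixed indices into $\tfrac{r-1}{2}$ orbits $\{j,\sigma(j)\}$. Setting $C_0=(2j_0)\bmod r$, the map $x\mapsto(C_0-x)\bmod r$ reverses the natural order on $\{0,\dots,r-1\}$ except for pairs straddling $C_0$. Consequently, within an orbit the strict relation determining the type of $j$ is reversed for $\sigma(j)$, \emph{unless} $f(j)$ and $j$ lie on opposite sides of $C_0$, in which case both indices of the orbit have the same type. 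Tracking signs, this reduces $A-C$ to the difference between the number of indices with $f(j)\le C_0<j$ and the number with $j\le C_0<f(j)$.

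The main obstacle is exactly this wrap-around discrepancy, and it dissolves via a conservation identity: for the partition $\{0,\dots,r-1\}=S\sqcup T$ with $S=\{0,\dots,C_0\}$, every bijection $f$ satisfies $\#\{j\in S:f(j)\in T\}=\#\{j\in T:f(j)\in S\}$, since both equal $|S|-\#\{j\in S:f(j)\in S\}$ (using $|f^{-1}(S)|=|S|$). These are precisely the two index-counts above, so they cancel, yielding $A=C$ and hence $\gamma(\alpha,\beta)=\tfrac{r+1}{2}$. I expect the only delicate point to be the bookkeeping of which straddling direction produces an all-descent orbit versus an all-ascent orbit; the conservation identity itself is immediate, and primality enters only through the existence of $j_0$ and $\alpha^{-1}$ together with the oddness of $r$.
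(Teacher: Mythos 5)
Your proof is correct; note that the paper itself gives no argument here -- the lemma is introduced only with the remark that it is ``elementary'' -- so your write-up in effect supplies the missing proof rather than diverging from an existing one. The four branches with $\alpha\in\{0,1\}$ are exactly the routine residue checks you give. For the generic branch every key step verifies: with $f(j)=(\alpha j+\beta)\bmod r$, bijectivity needs $\alpha\neq 0$ and primality of $r$, while $\alpha\neq 1$ makes $\alpha-1$ invertible and yields the unique fixed point $j_0$, so $B=1$; the commutation $f\circ\sigma=\sigma\circ f$ for $\sigma(j)=(2j_0-j)\bmod r$ reduces to $2\bigl((\alpha-1)j_0+\beta\bigr)\equiv 0\pmod r$, which holds by the defining relation of $j_0$; oddness of $r$ makes $j_0$ the unique $\sigma$-fixed point, pairing the remaining $r-1$ indices. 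The one delicate point you flag -- the orbit bookkeeping -- does work out: since $\sigma$ preserves each of the blocks $S=\{0,\dots,C_0\}$ and $T=\{C_0+1,\dots,r-1\}$, reversing order within a block and preserving it across blocks, a same-block orbit contributes one descent and one ascent, while a straddling orbit contributes two indices of the same type and straddles in the same direction, so $A-C=\#\{j\in T: f(j)\in S\}-\#\{j\in S: f(j)\in T\}$, which vanishes by your conservation identity. This is a genuinely substantive observation, since the count $\#\{j: f(j)\le j\}$ is not a priori invariant under conjugating $f$ by a rotation; the independence of $\beta$ is the real content of the generic case, and your argument earns it honestly. Two minor remarks: you should say explicitly that $\sigma$ preserves the blocks $S$ and $T$ (this is what guarantees both members of a straddling orbit have the same type -- it is implicit in your order-reversal description), and it is worth recording that the lemma tacitly assumes $r$ is an odd prime, as in the section where it is applied; for even or composite $r$ the formula $\frac{r+1}{2}$ and the bijectivity of $f$ can both fail. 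Neither point affects correctness.
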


\begin{proposition}\label{deltadetails} 
When $d=5$ and $r$ is a prime, and $H$ is the unique matrix implementing $\DQ(r;\mm)\simeq \DQ(r;\nn)$ with $y_{i}=0$ as in Lemma \ref{describeH} then if 
$m_1\not= m_2$ we have
\[
\Delta(Y_{21})=\left(\gamma\left(\frac{m_1-n_2}{n_2p_M},\frac{i}{n_2p_M}\right)-\gamma\left(\frac{m_1-n_2}{n_2p_N},\frac{i}{n_2p_N}\right)\right)
\]
with $p_N={n_1/n_2-1} $ and $p_M=m_1/m_2-1$, and when $m_1=m_2$ we have
\[
\Delta(Y_{21})=\oo
\]
\end{proposition}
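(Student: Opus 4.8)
The plan is to substitute the explicit formula for $Y_{21}$ furnished by Lemma~\ref{describeH} into the definition $\Delta(Y_{21})_i=\operatorname{trace}(\sh^iY_{21})$ and to recognise the resulting diagonal sums as values of the counting function $\gamma$ introduced just above. The starting point is the elementary identity
\[
\Delta(Y_{21})_i=\operatorname{trace}(\sh^iY_{21})=\sum_{a\in\ZZ/r}Y_{21}(a+i,a),
\]
which follows immediately from $\sh^i\ee_b=\ee_{b-i}$, giving $(\sh^iY_{21})(a,a)=Y_{21}(a+i,a)$, with the sum running over $a\in\ZZ/r$.

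For the case $m_1\neq m_2$, I would first insert the closed form
\[
Y_{21}(a+i,a)=\iverson{\tfrac{(a+i)/n_2-a/m_1}{n_1/n_2-1}\leq a/m_1}-\iverson{\tfrac{(a+i)/n_2-a/m_1}{m_1/m_2-1}\leq a/m_1}
\]
read off from Lemma~\ref{describeH} after fixing the free first column to $y_i=0$, so that the $y$-term drops out. The key manoeuvre is the change of summation variable $j:=a/m_1$, which is a bijection of $\ZZ/r$ since $r$ is prime and $\gcd(m_1,r)=1$. Writing $a=m_1 j$ turns the common numerator into a linear function of $j$,
\[
(a+i)/n_2-a/m_1=\tfrac{m_1-n_2}{n_2}\,j+\tfrac{i}{n_2},
\]
so that each Iverson bracket takes exactly the shape $\iverson{\alpha j+\beta\leq j}$ occurring in the definition of $\gamma$, with $\alpha=\tfrac{m_1-n_2}{n_2 p}$ and $\beta=\tfrac{i}{n_2 p}$ for $p=p_N=n_1/n_2-1$ and $p=p_M=m_1/m_2-1$ respectively.

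Summing over $a$, equivalently over $j\in\ZZ/r$, then identifies the two diagonal sums with $\gamma\!\left(\tfrac{m_1-n_2}{n_2 p_N},\tfrac{i}{n_2 p_N}\right)$ and $\gamma\!\left(\tfrac{m_1-n_2}{n_2 p_M},\tfrac{i}{n_2 p_M}\right)$, and collecting the two contributions with their signs yields the asserted difference of $\gamma$-terms. The remaining case $m_1=m_2$ is immediate, since Lemma~\ref{describeH} gives $Y_{21}(i,j)=y_i=0$, whence $Y_{21}=0$ and $\Delta(Y_{21})=\oo$.

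The genuinely delicate points, and where I expect the bookkeeping to be most error-prone, are (i) reading off the two denominators correctly from Lemma~\ref{describeH} and reducing its two-sided brackets $\iverson{1\leq\,\cdot\,\leq j}$ to the one-sided form matching $\gamma$, and (ii) ensuring that the comparisons ``$\leq$'' on $\ZZ/r$ are everywhere interpreted through the same representatives in $\{0,\dots,r-1\}$, so that the substitution $a=m_1 j$ preserves the count and not merely the residue. Both are routine once the conventions are fixed, and neither step requires the closed-form evaluation of $\gamma$ from the preceding lemma—that evaluation is only needed later, when these expressions must be compared across different weight vectors.
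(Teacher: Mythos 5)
Your proposal is correct and follows essentially the same route as the paper's own proof: the paper likewise inserts the explicit $Y_{21}$ from Lemma \ref{describeH} (with the free column set to zero), writes $\Delta(Y_{21})_i$ as the sum over a shifted diagonal, and performs exactly your rescaling $j\mapsto j/m_1$ (valid since $\gcd(m_1,r)=1$) to bring each bracket into the one-sided form $\iverson{\alpha j+\beta\leq j}$ defining $\gamma$, with $\alpha=\frac{m_1-n_2}{n_2p}$, $\beta=\frac{i}{n_2p}$ for $p\in\{p_M,p_N\}$. Your two flagged caveats are apt but harmless here, as the one-sided brackets are already the form in which Lemma \ref{describeH} states $Y_{21}$, and the $m_1=m_2$ case is dispatched identically.
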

\begin{proof}
For both $p\in\{p_M,p_N\}$ we get
\begin{eqnarray*}
\sum_{j=0}^{r-1}\iverson{\frac{(j+i)/n_2+j/m_1}p\leq \frac{j}{m_1}}
&=&\sum_{j=0}^{r-1}\iverson{j\frac{m_1-n_2}{m_1n_2 p}+i\frac1{n_2p}\leq {j}\frac1{m_1}}\\
&=&\sum_{j=0}^{r-1}\iverson{(j/m_1)\frac{m_1-n_2}{n_2 p}+i\frac1{n_2p}\leq j/m_1}\\
&=&\sum_{j=0}^{r-1}\iverson{j\frac{m_1-n_2}{n_2 p}+i\frac1{n_2p}\leq j}\\
&=&\gamma\left(\frac{m_1-n_2}{n_2p},\frac{i}{n_2p}\right)
\end{eqnarray*}
and collect the terms as in Lemma \ref{describeH}.
\end{proof}

Note that we have only described the $\Delta$-vector in the special case where the $\ell_i$ and $y_{i0}$ all vanish, and even in that case, the entries of $\Delta(Y_{21})$ vary in a rather complicated way with the parameters. Fortunately, even though we need the entries of $\Delta$ to match up precisely to invoke Lemma \ref{SYYSZ}(ii) in the positive direction, we can get by with much less for our negative purposes. We formalize this by passing to $\overline{\Delta(Y)}$ defined by
\[
\overline{\Delta(Y)}_i:=\Delta(Y)_i\mod r
\]
and the ad hoc equivalence relation defined below.

\begin{definition}
When $r$ is fixed, we define $\sim$ as the coarsest equivalence relation of $(\Zr)^r$ so that
\[
(x_0,\dots,x_{r-1})\sim (x_0+1,\dots,x_{r-1}+1)\qquad (x_0,\dots,x_{r-1})\sim (x_1,\dots,x_{r-1},x_0)
\]
\end{definition}

\begin{lemma}\label{siminv}
Suppose $d=3$ and $H,\widetilde{H}$ both induce an isomorphism $\DQ(r;\mm)\simeq \DQ(r;\nn)$. Then 
\[
\overline{\Delta}(Y_{21}) \sim \overline{\Delta}(\widetilde{Y}_{21})
\]
\end{lemma}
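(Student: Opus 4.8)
The plan is to exploit the explicit parametrization of all admissible matrices $H$ furnished by Lemma \ref{describeH}. Since $r$ is prime and both $H$ and $\widetilde H$ induce $\DQ(r;\mm)\simeq\DQ(r;\nn)$, each is, by Proposition \ref{charH} together with Lemma \ref{describeH}, of the stated lower–triangular form, and the only freedom is in two parameters: the diagonal shift exponent (call it $\ell$, the power of $\sh$ in the lower diagonal block $\sh^{\ell}\perm_{m_2/n_2}$) and the free first column $(y_i)$ of $Y_{21}$. It therefore suffices to show that varying $\ell$ and varying $(y_i)$ each move $\overline{\Delta(Y_{21})}$ only within a single $\sim$-class.

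First I would treat the $(y_i)$-dependence. Writing $\Delta(Y_{21})_s=\sum_j (Y_{21})_{j+s,j}$ and inserting the formula of Lemma \ref{describeH}, the contribution of the free term is $\sum_j y_{(j+s)/n_2-j/m_1}$. When $m_1\neq n_2$ the index $(j+s)/n_2-j/m_1=j(1/n_2-1/m_1)+s/n_2$ runs bijectively over $\Zr$ as $j$ does (here primality of $r$ is used), so this sum equals $\sum_u y_u$, independent of $s$. Hence altering $(y_i)$ changes $\Delta(Y_{21})$ by a constant vector, i.e.\ by iterating the move $(x_0,\dots,x_{r-1})\mapsto(x_0+1,\dots,x_{r-1}+1)$ generating $\sim$. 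In the excluded case $m_1=n_2$ the index is constant in $j$, so the $y$-contribution is a multiple of $r$ and leaves $\overline{\Delta(Y_{21})}$ unchanged.

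Next, the $\ell$-dependence, which is the heart of the matter. The shift-by-one automorphism $B=\left[\begin{smallmatrix}I&0\\V&\sh\end{smallmatrix}\right]$ of $\DQ(r;\nn)$ exists by Proposition \ref{charH}, and left-composition realizes the passage $\ell\to\ell+1$ as $Y_{21}^{(\ell+1)}=\sh\,Y_{21}^{(\ell)}+V\perm_{m_1/n_1}$ (up to the free-column ambiguity already handled above). Using the bookkeeping identity $\Delta(\sh Z)_s=\Delta(Z)_{s+1}$ and linearity of $\Delta$, the vector $\Delta(Y_{21}^{(\ell+1)})$ is the cyclic shift of $\Delta(Y_{21}^{(\ell)})$ plus $\Delta(V\perm_{m_1/n_1})$. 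Thus, provided $\Delta(V\perm_{m_1/n_1})$ is a constant vector, incrementing $\ell$ moves $\overline{\Delta(Y_{21})}$ by one cyclic shift together with a global constant—both generators of $\sim$—and combined with the previous paragraph this yields $\overline{\Delta(Y_{21})}\sim\overline{\Delta(\widetilde Y_{21})}$.

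The main obstacle is establishing that $\Delta(V\perm_{m_1/n_1})$ is constant. The clean way is to apply $\operatorname{trace}(\sh^s\,\cdot\,)$ to the intertwining relation satisfied by $V$ (the $\mm=\nn$ instance of the relation $\star$ behind Proposition \ref{charH}); this forces $\overline{\Delta(V\perm_{m_1/n_1})}$ to be invariant under a cyclic shift by a fixed amount, which for $r$ prime is nonzero—hence the vector is constant—in all but a few degenerate weight configurations (such as $m_1n_2\equiv n_1^2\Mod r$), where the shift amount collapses to $0$ and the trace identity gives no information. In those residual cases I would instead compute $\Delta(V\perm_{m_1/n_1})$ directly from the closed form of Lemma \ref{SYYSZ}(iii), reading off that its only $i$-dependence is again an overall cyclic shift absorbed by $\sim$. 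A secondary subtlety to watch is that the permutation blocks $\perm_{m_i/n_i}$ reindex $\Delta$ multiplicatively, $s\mapsto s/\beta$, which is \emph{not} itself a $\sim$-move; one must check that this reindexing only ever acts on the already-constant correction vectors (on which it is harmless) rather than on a genuinely non-constant profile.
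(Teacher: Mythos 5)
Your proposal is correct, and its skeleton is the same as the paper's: your first paragraph (the free column contributes the constant $\sum_u y_u$ when $m_1\neq n_2$, and multiples of $r$ when $m_1=n_2$, so only $\sim$-moves) is precisely the paper's main computation. You diverge only in how the shift exponent is handled: the paper composes with the automorphism $\left[\begin{smallmatrix}\sh^{\ell_1}&0\\X_{21}&\sh^{\ell_2}\end{smallmatrix}\right]$ of $\DT(r;\mm)$ already constructed in the proof of Lemma \ref{describeH} and disposes of the correction term by asserting that ``computations similar to those for $Y_{21}$'' give $\Delta(X_{21})\sim\oo$, whereas you iterate the elementary shift automorphism $B=\left[\begin{smallmatrix}I&0\\V&\sh\end{smallmatrix}\right]$ on the target side and extract constancy of the correction from trace identities. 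Your trace argument is sound and, in the generic case, arguably cleaner than the paper's unspecified computation: multiplying the relation $\sh^{n_2}V-V\sh^{n_1}=\sh^{n_2}-\sh^{n_2+1}$ on the right by the permutation block $\perm_\beta$ and applying $\operatorname{trace}(\sh^s\,\cdot\,)$ yields shift-invariance of $\Delta(V\perm_\beta)$ by $n_2-\beta n_1$, since $\operatorname{trace}(\sh^t\perm_\beta)=1$ for all $t$ when $\beta\neq1$; note only that the degenerate locus depends on the orientation convention for the diagonal blocks (it is $m_1=n_2$ for $\beta=m_1/n_1$, and $m_1n_2\equiv n_1^2$ for $\beta=n_1/m_1$, so your hedged parenthetical is the right instinct). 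Your fallback in the degenerate case also checks out: via Lemma \ref{SYYSZ}(iii) the free column enters $\Delta$ as $ry_s$, vanishing modulo $r$, and the particular solution contributes a genuinely constant residue, so $\overline{\Delta}$ of the correction is constant as needed. Two small caveats: the existence of $B$ is not given by Proposition \ref{charH} but by the solvability criterion of Lemma \ref{SYYSZ}(ii) as exploited in the proof of Lemma \ref{describeH}, and when $n_1=n_2$ such a $B$ need not exist---but there the very same trace identity shows that no isomorphism with $\ell_2\neq0$ exists either, so that case of your reduction is vacuous rather than broken.
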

\begin{proof}
If both $H$ and $\widetilde{H}$ have the form analyzed in Lemma \ref{describeH}, we get that 
\[
\Delta{(Y_{21})}-\Delta{(\widetilde{Y}_{21})}=\left(\sum_{j=0}^{r-1} y_{j/n_2-(i+j)/m_1}\right)
\]
which is either constant
$
\sum_{i=0}^{r-1}y_i
$
or varies with each $ry_i$ appearing exactly once, depending on whether or not $m_1=n_2$. In either case, $\overline{\Delta}(Y_{21})\sim\overline{\Delta}(\widetilde{Y}_{21})$.

When $H$ or $\widetilde{H}$ are given with $\ell_1\not=0$ or $\ell_2\not=0$, we need to adjust by $\Delta(X_{21})$ with $X_{21}$ as constructed in the proof of  Lemma \ref{describeH}. Computations similar to those for $Y_{21}$ show that $\Delta(X_{21})\sim \oo$.
\end{proof}

Because of Lemma \ref{siminv}, it makes sense to talk about $\overline{\Delta}(\mm,\nn)$ up to $\sim$-equivalence.


\begin{proposition}\label{deltadetailsplus}
For any choice of $\mm$ and $\nn$, we have
\[
\overline{\Delta}(\mm,\nn)\sim {\textsf x}_\gamma=(0,\gamma,2\gamma,\dots,(r-1)\gamma)
\]
for a unique choice of $\gamma$ given by the table
%
\begin{center}
\begin{tabular}{|c|c|c|c|}\hline
$|\{m_1,m_2,n_1,n_2\}|$&Subcase&Identities&$\gamma$\\\hline\hline
$4$&&&$0$\\\hline
\multirow{4}{*}{$3$}&(a)&$m_1=n_1$&$1/n_2p_N$\\
&(b)&$m_1=n_2$&$1/n_2p_M-1/n_2p_N$\\
&(c)&$m_2=n_1$&$0$\\
&(d)&$m_2=n_2$&$1/n_2p_M$\\\hline
\multirow{2}{*}{$2$}&(a)&$m_1=n_1,m_2=n_2$&$0$\\
&(b)&$m_1=n_2,m_2=n_1$&$1/n_2p_M-1/n_2p_N$\\\hline
\end{tabular}
\end{center}
\end{proposition}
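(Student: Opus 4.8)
The plan is to begin from the closed form for $\Delta(Y_{21})$ in Proposition~\ref{deltadetails} and push everything through reduction modulo $r$, under which the five-case function $\gamma$ collapses to something linear. First I would record that, reducing the formula of the lemma computing $\gamma(\alpha,\beta)$ mod $r$,
\[
\overline{\gamma}(\alpha,\beta)\equiv\begin{cases}-\beta&\alpha=0\\\beta&\alpha=1\\(r+1)/2&\text{otherwise}\end{cases}\pmod r,
\]
because $r\equiv0$ lets the degenerate value at $\beta=0$ be absorbed into the linear expressions $-\beta$ and $\beta$. The force of this step is that the \emph{otherwise} branch is constant in $i$ and hence invisible to $\sim$, whereas the two special branches are linear in $\beta$, and thus---after the substitutions $\beta_M=i/(n_2p_M)$ and $\beta_N=i/(n_2p_N)$ dictated by Proposition~\ref{deltadetails}---linear in $i$.

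Next I would convert the conditions $\alpha\in\{0,1\}$, which select the active branch, into coincidences among the four weights. With $\alpha_M=(m_1-n_2)/(n_2p_M)$ and $\alpha_N=(m_1-n_2)/(n_2p_N)$ and $p_M=m_1/m_2-1$, $p_N=n_1/n_2-1$, a one-line computation in $\Zru$ yields
\[
\alpha_M=0\iff m_1=n_2,\quad \alpha_M=1\iff m_2=n_2,\quad \alpha_N=0\iff m_1=n_2,\quad \alpha_N=1\iff m_1=n_1.
\]
This dictionary is what turns the analytic formula into the purely combinatorial bookkeeping of the table.

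With these two reductions the argument becomes a finite case check organized by $|\{m_1,m_2,n_1,n_2\}|$. I would first dispose of the degenerate case $m_1=m_2$, where Proposition~\ref{deltadetails} directly gives $\Delta(Y_{21})=\oo$ and hence $\overline{\Delta}\sim{\textsf x}_0$; since $r$ is prime, the matching ideal structure forces $n_1=n_2$ as well, so this case is consistent with slope $0$. Assuming then $m_1\neq m_2$ and $n_1\neq n_2$, every coincidence is forced to be a cross term, so three distinct values give exactly one of the four conditions above and two distinct values give either $m_1=n_1,m_2=n_2$ or $m_1=n_2,m_2=n_1$. In each case I substitute the simplified $\overline{\gamma}$ into $\overline{\Delta}_i=\overline{\gamma}(\alpha_M,\beta_M)-\overline{\gamma}(\alpha_N,\beta_N)$; each summand is either constant or one of $\pm i/(n_2p_M),\pm i/(n_2p_N)$, so $\overline{\Delta}_i=a+b\,i$ is affine, giving $\overline{\Delta}\sim{\textsf x}_b$ with $b$ the value $\gamma$ listed in the table---for example four distinct weights leave both terms constant and force $b=0$, while $m_2=n_2$ activates only the $\alpha_M=1$ branch and yields $b=1/n_2p_M$. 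Uniqueness of $\gamma$ is then immediate, since the common difference $x_{i+1}-x_i\equiv b$ is preserved by both generators of $\sim$.

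The main obstacle is bookkeeping rather than anything conceptual: across the seven cases one must track which of the two $\gamma$-terms sits in its $\alpha=0$ branch, its $\alpha=1$ branch, or its constant branch, and---most delicately---keep the signs straight, since the $\alpha=0$ branch contributes $-\beta$, the $\alpha=1$ branch $+\beta$, and the second term enters $\overline{\Delta}$ with an overall minus. The one genuinely fiddly point is the mixed cases (b) and 2(b), where both $\gamma$-terms are simultaneously in their $\alpha=0$ branch and must be shown to combine into the difference of the reciprocals $1/n_2p_M$ and $1/n_2p_N$; once the sign convention fixing ${\textsf x}_\gamma$ is respected there, every remaining entry of the table follows mechanically from the two displayed reductions.
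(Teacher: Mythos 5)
Your proof is correct and follows essentially the same route as the paper's own: reduce $\gamma$ modulo $r$ to the three-branch $\overline{\gamma}$, translate the branch conditions $\alpha\in\{0,1\}$ into coincidences among $m_1,m_2,n_1,n_2$, and finish with the finite case check, invoking $p_M=p_N$ in case (2)(a). One divergence is worth flagging, and it is in your favor: your dictionary $\alpha_M=1\iff m_2=n_2$, $\alpha_N=1\iff m_1=n_1$ is the transpose of the one displayed in the paper's proof, which asserts $\frac{m_1-n_2}{n_2p_M}=1\iff m_1=n_1$ and $\frac{m_1-n_2}{n_2p_N}=1\iff m_2=n_2$. With the paper's stated definitions $p_M=m_1/m_2-1$, $p_N=n_1/n_2-1$, direct computation supports your version (e.g.\ $m_1-n_2=n_2p_N=n_1-n_2\iff m_1=n_1$); the paper's labelling only comes out consistent because the displayed formula in Proposition \ref{deltadetails} also lists the two $\gamma$-terms in the opposite order to what the derivation from Lemma \ref{describeH} produces, so the two swaps compensate. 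Your bookkeeping is thus at least as coherent as the paper's, though you should be aware that with these conventions there remains a sign ambiguity in isolated entries (notably case (3)(d)) that traces back to these compensating conventions rather than to any error in your method. Finally, you make explicit two points the paper leaves implicit: the degenerate case $m_1=m_2$, where the table's entries $1/n_2p_M$ do not even parse and which you correctly route through $\Delta(Y_{21})=\oo$ together with the ideal-lattice constraint forcing $n_1=n_2$, and the uniqueness of $\gamma$, via invariance of the common difference $x_{i+1}-x_i$ (including the wrap-around difference) under both generators of $\sim$; both additions are sound.
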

\begin{proof}
We note that $\overline{\gamma}:\Zr\times\Zr\to\Zr$ defined by $ \overline{\gamma}(\alpha,\beta):={\gamma}(\alpha,\beta)\mod r$ is just
\[
\overline{\gamma}(\alpha,\beta)=\begin{cases}-\beta&\alpha=0\\\beta&\alpha=1\\\frac{r+1}2&\text{otherwise}\end{cases}
\]
To determine which of these three cases are relevant for the difference in Proposition \ref{deltadetails} we note that
\[
\frac{m_1-n_2}{n_2p_M}=0\Longleftrightarrow \frac{m_1-n_2}{n_2p_N}=0\Longleftrightarrow m_1=n_2
\]
and that
\begin{eqnarray*}
\frac{m_1-n_2}{n_2p_M}=1&\Longleftrightarrow m_1=n_1\\
\frac{m_1-n_2}{n_2p_N}=1&\Longleftrightarrow m_2=n_2.
\end{eqnarray*}
Combining these observations show all claims but the one at (2)(a), where we also need to note that in this case, $p_M=p_N$.
\end{proof}

\begin{proposition}\label{Prop:DimQ}
Suppose  $d=5$, $r$ is a prime and $\DQ(r;\mm)\simeq \DQ(r;\nn)$. When
\[
|\{m_1,m_2,m_3\}|<3
\]
there exists an $\alpha\in \Zru$ so that $\nn=\alpha \mm$.
\end{proposition}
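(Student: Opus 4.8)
The plan is to show that a repeated weight in $\mm$ rigidifies any isomorphism $H\colon\DQ(r;\mm)\to\DQ(r;\nn)$ enough to force $\nn$ to be a scalar multiple of $\mm$. First I would normalize, as permitted by Lemma \ref{Lemma:simpleiso}, so that $m_1=n_3=1$; under this normalization $\nn=\alpha\mm$ is equivalent to the two scalar identities $n_2=n_1m_2$ and $n_1m_3=1$. Since $r$ is prime, Theorem \ref{Thm:IdealLattice} gives $m_i=m_{i+1}\Leftrightarrow n_i=n_{i+1}$ for $i=1,2$, which already fixes the coincidence pattern of $\nn$ from that of $\mm$. By Proposition \ref{charH} any such $H$ is block lower triangular, and reading off the $21$- and $32$-blocks of the intertwining relation shows that $Y_{21}$ and $Y_{32}$ are genuine two-dimensional intertwiners, so Propositions \ref{deltadetails} and \ref{deltadetailsplus} compute $\overline{\Delta}(Y_{21})$ and $\overline{\Delta}(Y_{32})$ up to $\sim$.

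The coupling between the two ends comes from the $31$-entry, which is exactly \eqref{masterfive}. By Lemma \ref{SYYSZ}(ii) the existence of $Y_{31}$ forces the $\Delta$ of the right-hand side to vanish; reducing modulo $r$ and noting that the two permutation terms $\Delta(\sh\perm_{n_1})$ and $\Delta(\sh^{\ell_3+1}\perm_{1/m_3})$ contribute only constant vectors (an affine permutation of $\Zr$ has a constant number of fixed points), the vanishing says that the common differences of the affine vectors $\overline{\Delta}(Y_{21})$ and $\overline{\Delta}(Y_{32})$ coincide, i.e. $\overline{\Delta}(Y_{21})\sim\overline{\Delta}(Y_{32})$. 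This single $\sim$-identity, together with the ideal-lattice constraints and the normalization, is what I would feed into a case analysis on the equality pattern of $\mm$.

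If $m_1=m_2=m_3$ the ideal lattice forces $n_1=n_2=n_3=1$ and the claim is immediate. If the repetition is adjacent, say $m_1=m_2\neq m_3$ (the case $m_2=m_3\neq m_1$ being symmetric), then $\overline{\Delta}(Y_{21})\sim\oo$ because $(m_1,m_2)$--$(n_1,n_2)$ is an equal-weight pair, so the identity forces $\overline{\Delta}(Y_{32})\sim\oo$; evaluating the common difference of $\overline{\Delta}(Y_{32})$ for the pair $(m_2,m_3)$--$(n_2,n_3)$ with the explicit formula of Proposition \ref{deltadetails} produces the relation $n_1m_3=1$, whence $\nn=n_1\mm$.

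The main obstacle is the symmetric case $m_1=m_3\neq m_2$, where \emph{both} $\overline{\Delta}$'s are nontrivial and the slope identity is a single equation in the two unknowns $n_1,n_2$. Here I would substitute the explicit common differences from Proposition \ref{deltadetails} and run through the coincidence subcases of the pairs $(m_1,m_2)$--$(n_1,n_2)$ and $(m_2,m_3)$--$(n_2,n_3)$ left open by the ideal lattice, showing that every subcase except $n_1=m_1$ and $n_2=m_2$ makes the two common differences unequal; this forces $\nn=\mm$. The genuinely delicate point is the orientation-consistent evaluation of these common differences across the different subcases: a careless computation admits spurious solutions (for instance the $\gcd$-matched pair $(1,3,1)$ and $(1,4,1)$ at $r=5$), which are excluded only once $\overline{\Delta}(Y_{21})$ and $\overline{\Delta}(Y_{32})$ are compared with matching orientation. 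Carrying out this bookkeeping uniformly in the prime $r$ is where I expect the real work to lie.
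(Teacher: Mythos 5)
Your proposal follows the same route as the paper's own proof: after normalizing $m_1=n_3=1$ via Lemma \ref{Lemma:simpleiso} (with the same reduction of $\nn=\alpha\mm$ to $n_2=n_1m_2$ and $n_1m_3=1$), both arguments extract from the $31$-entry \eqref{masterfive} and Lemma \ref{SYYSZ}(ii) the single relation $\overline{\Delta}(Y_{21})\sim\overline{\Delta}(Y_{32})$ — your observation that the permutation terms contribute constant vectors modulo $r$ is exactly the paper's remark that $\overline{\Delta}(P_\alpha)\sim\oo$ — and then run a case analysis over the coincidence pattern of $\mm$, using Theorem \ref{Thm:IdealLattice} to transfer that pattern to $\nn$. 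Your treatment of the all-equal case and of an adjacent repetition agrees with the paper's cases $0^\circ$, $1^\circ$, $2^\circ$, including the correct deduction $p_M=p_N$, hence $n_1m_3=1$, from the vanishing slope in subcases (3)(b)/(2)(b) of Proposition \ref{deltadetailsplus}.

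The gap is that the symmetric case $m_1=m_3\neq m_2$ — the only case in which both slopes can a priori be nonzero, and the case carrying the real content — is announced but not carried out: you say you \emph{would} substitute the common differences and run through the subcases, asserting without verification that all of them except $n_1=m_1$, $n_2=m_2$ fail. The paper's case $3^\circ$ shows how this finite check closes: after normalization the right-hand pair is $((m_2,1),(n_2,1))$, whose coincidence pattern always contains the table identity ``$m_2=n_2$'' (both second entries are $1$), so only subcases (2)(a) and (3)(d) can occur on the right; if (3)(d) occurred, both slopes would be nonzero, forcing the left pair $((1,m_2),(n_1,n_2))$ into (3)(a) or (3)(d), and each combination is eliminated — (3)(d)-left asserts $m_2=n_2$, contradicting (3)(d)-right, while (3)(a)-left against (3)(d)-right forces $m_2=n_2$ by equating slopes; hence the right side is in (2)(a), i.e.\ $m_2=n_2$, which places the left pair in (2)(a) as well (the only vanishing-slope subcase compatible with the coincidence $m_2=n_2$) and yields $n_1=1$. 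Your orientation warning is not merely prudential but essential: for the pair $(1,3,1)$, $(1,4,1)$ at $r=5$, reading the two slopes off Proposition \ref{deltadetailsplus} with inconsistent normalizations (the right-hand pair has its distinguished entry in the \emph{second} slot, so it must be reflected before the tabulated formulas apply) produces the condition $m_2+n_2=2$ rather than $m_2=n_2$ in the (3)(a)/(3)(d) combination, and $3+4\equiv 2\pmod 5$, so the naive comparison admits exactly the counterexample you name. Since you correctly diagnose this danger but do not perform the orientation-consistent bookkeeping that resolves it, the proof is incomplete at its decisive step.
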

\begin{proof}
We may pass to the situation  $m_1=n_3=1$ as discussed above. In this case, we must have $\alpha=n_1$, so our aim will be to prove 
\[
n_2=n_1m_2\qquad 1=n_1m_3.
\]
We argue from \eqref{masterfive}. Since $\overline{\Delta}(P_\alpha)\sim \oo$ for any $\alpha\in\Zru$, we always know that 
\[
\overline{\Delta}(Y_{21})\sim \overline{\Delta}(Y_{32})
\]
or in other words
\[
\overline{\Delta}((1,m_2),(n_1,n_2))\sim \overline{\Delta}((m_2,m_3),(n_2,1))
\]
We work casewise over the various ways that the $m_i$ can agree.

\noindent \underline{{$0^\circ$}: $m_2=m_3=1$}\\

We have $\gcd(m_2-m_1,r)=\gcd(m_3-m_2,r)=r$ and consequently also $\gcd(n_2-n_1,r)=\gcd(n_3-n_2,r)=r$ as seen in Theorem \ref{Thm:IdealLattice}. We conclude that $n_1=n_2=n_3=1$ as desired.

\noindent \underline{{$1^\circ$}: $m_2=1\not=m_3 $}

Our task is to establish 
\[
n_1=n_2=1/m_3.
\]
Again we know from Theorem \ref{Thm:IdealLattice}  that $n_1=n_2$ so it suffices to show that $n_1=1/m_3$, knowing that 
\[
\overline{\Delta}((1,m_3),(n_2,1))=\oo.
\]
Since we are in either Case (3)(b) or (2)(b) of Proposition  \ref{deltadetails}, we conclude that $p_M=p_N$, i.e. $1/m_3=n_2/1$ as desired.\\

\noindent \underline{{$2^\circ$}: $m_2=m_3\not=1 $} \\
Exactly as case $1^\circ$.
\\

\noindent \underline{{$3^\circ$}: $m_3=1\not=m_2$}

This time our goal is to establish
\[
n_1=1,m_2=n_2,
\]
knowing that 
\begin{equation}\label{blacktriangle}
\overline{\Delta}((1,m_2),(n_1,n_2))\sim \overline{\Delta}((m_2,1),(n_2,1))
\end{equation}

This time the possible cases from Proposition  \ref{deltadetailsplus}  are 
\[
(2)(a),(3)(a),(3)(c),(3)(d),(4)
\]
to the left, and
\[
(2)(a),(3)(d)
\]
to the right. 

If we had been in case (3)(d)  to the right, the vectors in \eqref{blacktriangle} would not vanish, and thus the possible cases to the left are (3)(a) and (3)(d). Having both cases be (3)(d) is not consistent, and in the case with (3)(a) to the left and (3)(d) to the right, we infer from  Proposition  \ref{deltadetailsplus} that $m_2=n_2$, which is a contradiction.

Consequently we must have case (2)(a) to the right, and both  vectors in \eqref{blacktriangle} vanish. The possible cases to the left are thus (2)(a),(3)(c),(4), of which only (2)(a) is consistent with the choice of (2)(a) to the right, proving the claim.
%
%
%
%
\end{proof}

\begin{theorem}\label{mainresult5}
Let $r\in\Nb$ be a prime,  and consider $\underline{m}=(m_1,m_2,m_3)$, $\underline{n}=(n_1,n_2,n_3)$ in $((\ZZ/r)^\times)^3$. Then the following are equivalent: 
\begin{enumerate}[(i)]
\item $(\QLS[5],\gamma)\simeq (\QLSalt[5]),\gamma)$. 
\item $\DQp(r;\mm)\simeq \DQp(r;\nn)$
\item $(\QLS[5]\otimes\KKK,\gamma\otimes\id)\simeq (\QLSalt[5]\otimes\KKK,\gamma\otimes\id)$.
\item $\DQ(r;\mm)\simeq \DQ(r;\nn)$
\item $\newSS{\mm}=\newSS{\nn}$ 
\item $|\{m_1,m_2,m_3\}|=3=|\{n_1,n_2,n_3\}|$ or $\mm=\alpha \nn$ for some $\alpha \in \Zru$
\end{enumerate}
\end{theorem}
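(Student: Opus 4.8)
The plan is to prove the cycle of implications
\[
(i)\Rightarrow(iii)\Rightarrow(iv)\Rightarrow(vi)\Rightarrow(v)\Rightarrow(i),
\]
and to fold in $(ii)$ through the two separate implications $(i)\Rightarrow(ii)\Rightarrow(iv)$. First I would dispatch the invariant-theoretic implications. The implication $(i)\Rightarrow(iii)$ is immediate by tensoring the given equivariant isomorphism with $\id$ on $\KKK$. Using the corner description of $\QLS[5]$ as $\sum_i P_{(v_i,0)}C^*(\ourE)\sum_i P_{(v_i,0)}$ together with fullness of the vertex-projection sum, Theorem \ref{Thm:DQ0} then yields $(i)\Rightarrow(ii)$ and $(iii)\Rightarrow(iv)$. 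To move from the pointed to the ideal quadruple, I note that an isomorphism of $\DQp$ is in particular an order isomorphism $\eta$ of $\DT$ carrying $[p_0]$ to $[p_0]$, hence carrying the generated order ideal $I([p_0])$ onto $I([p_0])$; this gives $(ii)\Rightarrow(iv)$, so that $(ii)$ ends up pinned between $(i)$ and $(iv)$.

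The real content is the necessity step $(iv)\Rightarrow(vi)$, for which I would invoke Proposition \ref{Prop:DimQ}. If $|\{m_1,m_2,m_3\}|<3$, that proposition directly produces a unit $\alpha$ with $\nn=\alpha\mm$, so $\mm=\alpha^{-1}\nn$ and the second clause of $(vi)$ holds. If instead $|\{m_1,m_2,m_3\}|=3$, I argue by symmetry: were $|\{n_1,n_2,n_3\}|<3$, applying Proposition \ref{Prop:DimQ} with the roles of $\mm$ and $\nn$ exchanged (legitimate since $\DQ$-isomorphism is a symmetric relation) would give $\mm=\beta\nn$ for a unit $\beta$, forcing $|\{m_1,m_2,m_3\}|=|\{n_1,n_2,n_3\}|$ and hence a contradiction. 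Therefore $|\{n_1,n_2,n_3\}|=3$ as well, which is the first clause of $(vi)$.

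For the remaining implications $(vi)\Rightarrow(v)\Rightarrow(i)$ I would proceed as follows. For $(vi)\Rightarrow(v)$ I split on the two clauses: if both weight vectors have three distinct entries, Proposition \ref{prop:explicit_iso} gives $\newSS{\mm}=\newSS{\nn}$ outright; if $\mm=\alpha\nn$, then Corollary \ref{Cor:SameMultiSet} gives $\newSS{\nn}=\newSS{\alpha\nn}=\newSS{\mm}$. For $(v)\Rightarrow(i)$ I first normalize by Lemma \ref{Lemma:simpleiso}, which gives $(\QLS[5],\gamma)\simeq(C(L_q^5(r;m_1^{-1}\mm)),\gamma)$ and likewise for $\nn$, the rescaled weights $m_1^{-1}\mm$ and $n_1^{-1}\nn$ now having first coordinate $1$. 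By Corollary \ref{Cor:SameMultiSet} the hypothesis $\newSS{\mm}=\newSS{\nn}$ transfers to these rescaled weights, so Proposition \ref{Prop:IsomorphismDim5} produces an equivariant isomorphism between the two normalized fixed point algebras; composing with the two normalizing equivariant isomorphisms yields $(i)$.

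The main obstacle is the necessity implication $(iv)\Rightarrow(vi)$: essentially all of the difficulty is carried by Proposition \ref{Prop:DimQ}, whose proof is the delicate part, turning on the $\overline{\Delta}$-invariant computed in Proposition \ref{deltadetailsplus} and the casewise matching of its four subcases against the constraint $\overline{\Delta}(Y_{21})\sim\overline{\Delta}(Y_{32})$ extracted from \eqref{masterfive}. Once that proposition is available, the symmetry argument above is the only additional subtlety, and the other implications amount to bookkeeping over the cited results.
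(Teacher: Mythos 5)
Your proposal is correct and follows essentially the same route as the paper's proof: Theorem \ref{Thm:DQ0} for (i)$\Rightarrow$(ii) and (iii)$\Rightarrow$(iv), Proposition \ref{Prop:DimQ} for the necessity step (iv)$\Rightarrow$(vi), Proposition \ref{prop:explicit_iso} together with Corollary \ref{Cor:SameMultiSet} for (vi)$\Rightarrow$(v), and Lemma \ref{Lemma:simpleiso} with Proposition \ref{Prop:IsomorphismDim5} for (v)$\Rightarrow$(i). You are in fact slightly more careful than the paper's brief proof: you supply the easy but formally necessary implication (ii)$\Rightarrow$(iv) (the paper records only (i)$\Rightarrow$(ii), leaving (ii) outside the closed cycle), you spell out the symmetry argument needed to conclude $|\{n_1,n_2,n_3\}|=3$ from Proposition \ref{Prop:DimQ}, and your attribution of the two sufficiency propositions to (vi)$\Rightarrow$(v) and (v)$\Rightarrow$(i) untangles what appears to be a swapped pair of citations in the paper's own text.
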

\begin{proof}
We get (i)$\Longrightarrow$(ii) and (iii)$\Longrightarrow$(iv) by  Theorem \ref{Thm:DQ0}, and $(i)\Longrightarrow$(iii) is clear. Proposition \ref{Prop:DimQ} shows (iv)$\Longrightarrow$(vi), and we get (vi)$\Longrightarrow$(v) by Proposition \ref{Prop:IsomorphismDim5} . Proposition \ref{prop:explicit_iso} and Lemma \ref{Lemma:simpleiso} closes the circle by showing (v)$\Longrightarrow$(i). 
\end{proof}

\begin{remark}\label{zerocols}
We note that whenever $(\QLS[5],\gamma)\simeq (\QLSalt[5]),\gamma)$ with $r$ a prime, we can induce the isomorphism  $\DQ(r;\mm)\simeq \DQ(r;\nn)$ with an $H$ in Proposition \ref{charH} satisfying the additional conditions that $\ell_c=0$ for all $1\leq c\leq 3$ and that $Y_{ab}(i,0)=0$ for all $i$ and all $1\leq b<a\leq 3$.

To see this, we argue from (v). In the case $\mm=\alpha\nn$ we can take
\[
H=\begin{bmatrix}\perm_\alpha &0&0\\
0&\perm_\alpha&0\\
0&0&\perm_\alpha
\end{bmatrix},
\]
and in the other case, we may assume that $m_1=1=n_3$.  Using Lemma \ref{describeH}, we see that we can choose $Y_{21}$ and $Y_{32}$ so that $H$ will intertwine $\myA[r;\mm]$ and $\myA[r;\nn]$ at the 21 and 32 block matrix entries, and  with the first column vanishing. The options of subcases of  Proposition  \ref{deltadetailsplus} then become
\begin{center}
\begin{tabular}{|c|c|c|}\hline
$|\{1,m_2,m_3,n_1,n_2\}|$&Left&Right\\\hline
5&4&4\\\hline
\multirow{4}{*}{$4$}&3(d)&3(a)\\
&4&4\\
&3(c)&4\\
&4&3(c)\\\hline
\multirow{2}{*}{$3$}&3(c)&3(c)\\
&3(d)&3(a)\\\hline
\end{tabular}
\end{center}
and we see by inspecting Lemma \ref{deltadetails} in all these cases that not only do the $\overline{\Delta}$ match up up to $\sim$, but in fact that the ${\Delta}$ match up up to identity. Thus we can choose $Y_{31}$ with the first column vanishing.
\end{remark}
\section{Further cases}

In this final section of the paper, we will compare the statements 
\begin{enumerate}[(I)]
\item $(\QLS,\gamma)\simeq (\QLSalt,\gamma)$. 
\item $\DQp(r;\mm)\simeq \DQp(r;\nn)$
\item $(\QLS\otimes\KKK,\gamma\otimes\id)\simeq (\QLSalt\otimes\KKK,\gamma\otimes\id)$.
\item $\DQ(r;\mm)\simeq \DQ(r;\nn)$
\item $\newSS{\mm}=\newSS{\nn}$ 
\item ($|\{m_1,\dots,m_{k+1}\}|=k+1=|\{n_1,\dots,n_{k+1}\}|$ and  $\gcd(m_{i+1}-m_i,r)=\gcd(n_{i+1}-n_i,r)$), or $\mm=\alpha \nn$ for some $\alpha \in \Zru$
\item We can induce the isomorphism  $\DQ(r;\mm)\simeq \DQ(r;\nn)$ with an $H$ as in Proposition \ref{charH} satisfying the additional conditions that $\ell_c=0$ for all $1\leq c\leq k+1$ and that $Y_{ab}(i,0)=0$ for all $i$ and all $1\leq b<a\leq k+1$.
\end{enumerate}
for general $r,\mm,$ and $\nn$. 

Note that conditions (I)--(V) exactly parallel the conditions of Theorem \ref{mainresult5}. Condition (VI) combines (vi) from  Theorem \ref{mainresult5}
with  Theorem \ref{Thm:IdealLattice}. This becomes necessary when $r$ is composite, since the $\gcd$ is no longer the same for any differing pair of units, as it were for prime $r$.

Condition (VII) is motivated in computability issues for the purposes of comparing the various invariants from (II), (IV), (V), (VI), (VII) by computer experiments. All of these invariants can be computed and compared algorithmically, but whereas it is straightforward to do so for (V) and (VI) -- in fact  the computation of $\newSS{-}$ can be done very efficiently by matrix multiplication -- the necessary solution of linear systems over $\ZZ$ to decide (II) and (IV) are quite time-consuming. 

The fact that we need to allow for general $\ell_c$ in Proposition \ref{charH}  is a major problem in this regard, because the non-linearity of the impact of the $\ell_c$ makes it impossible to solve for these values. Consequently, all possibilities must be checked before one can discard the possibility of $\DQ(r;\mm)\simeq \DQ(r;\nn)$ or $\DQp(r;\mm)\simeq \DQp(r;\nn)$ to hold. It is worth noting that we proved for $d\leq 3$ and for $d\leq 5$ and $r$ a prime that whenever an isomorphism exists, there is one with $\ell_c=0$ for all $c$, but we do not know how to show this in general. The most basic issue here is that it is no longer true at $d=5$ that all possible choices of $\ell_c$ can be realized by automorphisms and then disregarded as we did for $d=3$. 

We introduce (VII)  as a strengthening of condition (IV), where we not only assume that all the $\ell_c$ vanish, but also that the condition $H[p]=[p]$ is satisfied in the obvious way of having all first columns of all subdiagonal blocks vanish. Checking this can be done substantially faster, because of the fact that Lemma \ref{SYYSZ}(i)  shows that the linear system determining the existence or non-existence of $Y$ has at most one solution. As noted in Remark \ref{zerocols}, we can prove that (IV) (and hence (II)) implies this stronger condition in the cases we have solved theoretically. We have  implemented tests for all conditions in Maple, but carrying out these tests for (II) and (IV) is prohibitively costly for $r$ or $d$ much further beyond the instances we understand theoretically.

We summarize our work this far in

\begin{theorem}
Conditions (I)--(VII) are equivalent when $d=3$ or when $d=5$ and $r$ is a prime.
\end{theorem}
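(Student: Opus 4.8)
The plan is to prove the theorem by treating the two regimes, $d=3$ and $d=5$ with $r$ prime, separately, in each case extracting the bulk of the equivalences from the main theorems already established and then folding in the conditions that do not appear there verbatim, namely the intermediate conditions (II) and (III) and the two ``normalized'' conditions (VI) and (VII). The recurring principle is that (II)--(V) sit close to (I) by functoriality of the dimension quadruples (Theorem \ref{Thm:DQ0}) together with the trivial observations that an isomorphism of $\DQp$ restricts to one of $\DQ$ (an order ideal is determined by its generator) and that tensoring an equivariant isomorphism with $\id_\KKK$ yields a stable one. Thus the substantive new work is confined to (VI) and, above all, (VII).

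For $d=5$ with $r$ prime, Theorem \ref{mainresult5} already delivers the equivalence of (I)--(V) with its own sixth condition ``$|\{m_1,m_2,m_3\}|=3=|\{n_1,n_2,n_3\}|$ or $\mm=\alpha\nn$.'' First I would identify this with (VI): since $r$ is prime, distinct weights force $\gcd(m_{i+1}-m_i,r)=1=\gcd(n_{i+1}-n_i,r)$, so the extra $\gcd$-clause of (VI) is automatic, while conversely (VI) plainly implies Theorem \ref{mainresult5}(vi). It then remains to attach (VII). The implication (VII)$\Rightarrow$(IV) is immediate, as (VII) merely asserts that the intertwiner witnessing (IV) can be taken in a special shape; moreover any such $H$, having vanishing first columns in its subdiagonal blocks and $\ell_c=0$, satisfies $H[p]=[p]$ and hence also gives (II). The reverse implication, that (I) forces (VII), is exactly Remark \ref{zerocols}, which starting from (v) constructs an $H$ with every $\ell_c=0$ and every $Y_{ab}(i,0)=0$; this closes the cycle.

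For $d=3$ and arbitrary $r$ the backbone is Theorems \ref{mainresult3} and \ref{mainresult3plus}, giving (I)$\Leftrightarrow$(IV)$\Leftrightarrow$(V)$\Leftrightarrow$``$\gcd(m_2-m_1,r)=\gcd(n_2-n_1,r)$.'' I would fold in (VI) by a short $k=1$ computation: if $m_1\neq m_2$ the common $\gcd$ is a proper divisor of $r$, forcing $n_1\neq n_2$ and hence the first clause of (VI), whereas if $m_1=m_2$ the $\gcd$ equals $r$, so $n_1=n_2$ and $\mm=\alpha\nn$ with $\alpha=m_1n_1^{-1}\in\Zru$; the converse holds because multiplication by a unit preserves the $\gcd$. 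Conditions (II) and (III) join through (I)$\Rightarrow$(II) and (I)$\Rightarrow$(III) (Theorem \ref{Thm:DQ0} and trivial tensoring) together with (II)$\Rightarrow$(IV) and (III)$\Rightarrow$(IV) noted above. For (VII) I would argue (II)$\Leftrightarrow$(VII): one direction is the triviality already recorded, and for (II)$\Rightarrow$(VII) I would start from a $\DQp$-isomorphism $H$, which by Proposition \ref{charH} has $\ell_1=0$. In the degenerate case $m_1=m_2$ (so $n_1=n_2$) the block-diagonal map with $\perm_\alpha$ on each diagonal already realizes (VII); otherwise $m_1\neq m_2$ forces $n_1\neq n_2$, and composing $H$ with a $\DQp$-automorphism of $\DT(r;\nn)$ that shifts $\ell_2$ to $0$ (such an automorphism exists precisely because the weights differ, its off-diagonal block being obtained from Lemma \ref{SYYSZ}) yields an $H$ with $\ell_1=\ell_2=0$; the relation $H[p]=[p]$ then forces the first column of $Y_{21}$ to vanish, which is (VII).

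The step I expect to be the true obstacle is (VII): that an existing isomorphism of dimension quadruples can always be renormalized so that every $\ell_c$ vanishes and every subdiagonal first column is zero. For $d=5$, $r$ prime this is the delicate computation of Remark \ref{zerocols}, which rests on the prime-$r$ evaluation of the $\Delta$-vectors in Proposition \ref{deltadetailsplus} and the explicit form of $H$ in Lemma \ref{describeH}; for $d=3$ it rests on the ability to absorb the shift exponent $\ell_2$ into an automorphism whenever the two weights differ. As the surrounding discussion emphasizes, this renormalization is exactly what breaks down otherwise --- at $d=5$ with composite $r$, or at higher dimension, not all choices of $\ell_c$ are realized by automorphisms --- so (VII) is simultaneously the heart of the argument and the precise reason the equivalence is confined to $d=3$ and to $d=5$ with $r$ prime.
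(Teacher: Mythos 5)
Your assembly matches the paper's intended (and largely implicit) proof: the theorem is a summary statement, and for $d=5$ with $r$ prime your cycle --- Theorem \ref{mainresult5} for (I)--(V) together with its condition (vi), the observation that (vi) coincides with (VI) because all $\gcd$'s are automatically $1$ for distinct units modulo a prime, Remark \ref{zerocols} for (I)$\Rightarrow$(VII), and the trivial implications (VII)$\Rightarrow$(II)$\Rightarrow$(IV) --- is exactly the route the paper relies on. Likewise your treatment of (I)--(VI) at $d=3$ via Theorems \ref{mainresult3}, \ref{mainresult3plus} and \ref{Thm:DQ0}, plus the short $\gcd$ computation identifying (VI) with the condition $\gcd(m_2-m_1,r)=\gcd(n_2-n_1,r)$, is correct.

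The genuine gap is your argument for (II)$\Rightarrow$(VII) at $d=3$ when $r$ is composite. You invoke ``a $\DQp$-automorphism of $\DT(r;\nn)$ that shifts $\ell_2$ to $0$, its off-diagonal block being obtained from Lemma \ref{SYYSZ}'', but the construction of automorphisms realizing arbitrary shift exponents is the content of Lemma \ref{describeH}, which is stated and proved only for $r$ prime, and its proof genuinely uses primality: solvability of $\sh Y-Y\sh=Z$ requires $\Delta(Z)=\oo$, and the relevant $Z$ there is a difference $\sh^{a}\perm_\beta-\sh^{b}\perm_\beta$ whose $\Delta$ vanishes for all $a,b$ precisely because $\Delta(\perm_\beta)$ is constant --- which holds if and only if $\gcd(\beta-1,r)=1$. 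When $g:=\gcd(m_2-m_1,r)>1$ one computes $\Delta(\sh^{a}\perm_\beta)_i=g\iverson{g\mid i+a}$, so an automorphism with diagonal exponents $(0,-\ell_2)$ can only exist when $g\mid\ell_2$. To repair your argument you would first have to show that every intertwiner is forced to have $\ell_2\equiv 0\pmod g$ (this is true, but needs a proof, e.g.\ by telescoping the intertwining relation $r$ times and using $m_2\equiv m_1$, $n_2\equiv n_1\pmod g$), and then solve a generalized Sylvester equation $\sh^{n_2}Y-Y\sh^{m_1}=Z$, which Lemma \ref{SYYSZ}(ii) as stated does not cover, while also respecting the vanishing pattern that Proposition \ref{charH} imposes on $Y_{21}$ when $g>1$ (for prime $r$ that pattern is vacuous, which is why Lemma \ref{describeH} could ignore it).

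A second, smaller slip in the same step: after composing $H$ with the automorphism $A$, the relation $H[p]=[p]$ that you use to kill the first column of $Y_{21}$ need not survive, since an $A$ with second diagonal block $\sh^{-\ell_2}$, $\ell_2\neq 0$, does not fix $[p]$; one must additionally adjust the first column of $A$'s off-diagonal block (it has to equal $\ee_0-\ee_{\ell_2}$, using the freedom in the homogeneous solutions of Lemma \ref{SYYSZ}(i)) and verify that this adjustment is again compatible with the order-forced vanishing pattern --- compatible here exactly because $g\mid\ell_2$, but this is not addressed in your proposal. Note that the paper itself is terse on this point: Remark \ref{zerocols} covers only $d=5$ with $r$ prime, so the $d=3$, composite-$r$ instance of (IV)$\Rightarrow$(VII) is the one place where the summary theorem requires an argument not written out elsewhere, and your proposal does not yet supply a valid one.
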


Comparing (V)--(VII) by computer experiments is possible for several choices of $r$ and $d$, and we have done so systematically for
\begin{itemize}
\item $d=5$ for $r\leq 31$
\item $r=5$ for $d\leq 13$
\item $r=8$ for $d\leq 11$
\end{itemize}
(recall that $r\in \{3,4,6,12\}$ are covered by Corollary \ref{easy34612}, so the two choices of $r$ are the smallest interesting prime and composite numbers, respectively.) Since no differences were found at $d=5$ for composite $r$, we venture:

\begin{conjecture}
Conditions (I)--(VII) are equivalent when $d\leq 5$.
\end{conjecture}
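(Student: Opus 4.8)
The plan is to reduce the statement to a single open case and then transplant the proof of Theorem~\ref{mainresult5}, replacing every use of primality by the finer $\gcd$-data of Theorem~\ref{Thm:IdealLattice} and Proposition~\ref{PropSetOfNumbers}. Since $d\le 3$ for all $r$ and $d=5$ for prime $r$ are already covered by the preceding theorem, and since the implications $(\mathrm{I})\Rightarrow(\mathrm{II})\Rightarrow(\mathrm{IV})$, $(\mathrm{I})\Rightarrow(\mathrm{III})\Rightarrow(\mathrm{IV})$ and $(\mathrm{VII})\Rightarrow(\mathrm{IV})$ hold for all $r$ by Theorem~\ref{Thm:DQ0} and the definitions, everything reduces to closing the cycle $(\mathrm{IV})\Rightarrow(\mathrm{VI})\Rightarrow(\mathrm{V})\Rightarrow(\mathrm{I})$ together with $(\mathrm{IV})\Rightarrow(\mathrm{VII})$ in the remaining case $d=5$ with $r$ composite.

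First I would treat sufficiency. Rather than the substitution chain $n_2=(2-m_2^{-1})^{-1}$ of Proposition~\ref{Prop:IsomorphismDim5}, which only connects weights along a prime-specific orbit of $\Zru$, I would match generators directly as in Proposition~\ref{ExistenceGPisom}: the equality $\newSS{\mm}=\newSS{\nn}$ fixes, for each ordered pair of levels, the multiset of path lengths modulo $r$, while inserting a loop $S_{e_{ii}^r}$ adjusts an individual length by $r$ without disturbing the Cuntz--Krieger relations (the top vertex being a sink). The lemma to prove is a composite-$r$ replacement for Lemma~\ref{Lemma:5dim}: a bijection realizing the mod-$r$ match of the two length multisets can be chosen so that the loop insertions at level $2$ and the compensating deletions at level $3$ along the paths $\alpha_{t_1,t_2,t_3}$ are mutually consistent. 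Here $\newSS{(m_i,m_j)}=\{\gcd(m_j-m_i,r)\,k\}$ of Proposition~\ref{PropSetOfNumbers} replaces the full residue system, and the $r\times r$ table argument of Proposition~\ref{prop:explicit_iso} must be redone without the cancellation law that primality provided; this simultaneously yields $(\mathrm{VI})\Rightarrow(\mathrm{V})$ stratum by stratum.

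For necessity $(\mathrm{IV})\Rightarrow(\mathrm{VI})$ I would re-run the $\Delta$-vector obstruction of Propositions~\ref{deltadetails} and~\ref{deltadetailsplus}. The normal form of Lemma~\ref{describeH}, the solvability criterion of Lemma~\ref{SYYSZ}(ii), and the $\sim$-invariant $\overline{\Delta}(\mm,\nn)$ of Lemma~\ref{siminv} are stated without primality, but their explicit evaluation is not: for composite $r$ the quantities $(m_1-n_2)/(n_2 p)$ may be zero divisors, so the three-branch formula for $\overline{\gamma}$ acquires additional cases and one must track $\gcd$-data rather than plain (in)equality of weights. I would first invoke the $\gcd$-invariant of Theorem~\ref{Thm:IdealLattice}, valid for all $r$, to separate the $\mm=\alpha\nn$ stratum from the distinct-weights stratum, and then use the refined $\Delta$-obstruction only to decide membership inside a stratum, where fewer branches of $\overline{\gamma}$ can occur.

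Finally, $(\mathrm{IV})\Rightarrow(\mathrm{VII})$ requires a normalization lemma strengthening Remark~\ref{zerocols}: one must characterize which tuples $(\ell_1,\dots,\ell_{k+1})$ are realized by automorphisms of $\DT(r;\mm)$ and show that any $H$ witnessing $(\mathrm{IV})$ (in the form of Proposition~\ref{charH}) can be post-composed with such an automorphism to achieve $\ell_c=0$ and vanishing first columns of the subdiagonal blocks. \emph{The main obstacle} is precisely this step in concert with the zero-divisor branches above. As the authors note, at $d=5$ not every $\ell_c$ is realized by an automorphism, so the ``gauge away the $\ell_c$'' move available at $d=3$ fails; for composite $r$ the non-realizable shifts interact with the extra branches of $\overline{\gamma}$ in a manner not controlled by the $\gcd$-invariant alone. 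Turning the empirical fact that these obstructions always cancel---verified in Maple for $d=5$, $r\le 31$---into a uniform argument across all composite $r$ is the crux of the problem.
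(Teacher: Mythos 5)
First, note that the statement you were asked to prove is labelled a \emph{conjecture} in the paper: the authors offer no proof, only systematic computer verification comparing (V)--(VII) (for $d=5$, $r\leq 31$) together with the remark that the remaining cases could likely ``be proved along the lines of the work already presented'' but that several technical issues leave them daunted. Your proposal is essentially that same research programme, and it is correctly organized --- the reduction to the single open case $d=5$ with $r$ composite, the cycle $(\mathrm{IV})\Rightarrow(\mathrm{VI})\Rightarrow(\mathrm{V})\Rightarrow(\mathrm{I})$ that routes around the Hazrat-type question of whether $\DQ$ is a complete invariant, and the isolation of $(\mathrm{IV})\Rightarrow(\mathrm{VII})$ as a separate normalization problem. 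But it is not a proof: every new arrow is delegated to an unproven lemma at exactly the point where primality was load-bearing in the paper's $d=5$ arguments, and you say so yourself.

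Concretely, four independent gaps remain open. (a) For $(\mathrm{VI})\Rightarrow(\mathrm{V})$, the table argument of Proposition \ref{prop:explicit_iso} rests on the cancellation $y(k-1)\equiv z(k-1)\pmod{r}\Rightarrow y=z$, which fails when $k-1$ is a zero divisor; so for composite $r$ it is not even established that the multiset $\newSS{(m_1,m_2,m_3)}$ is determined by the $\gcd$-data appearing in (VI). (b) For $(\mathrm{V})\Rightarrow(\mathrm{I})$, Proposition \ref{Prop:IsomorphismDim5} travels through $\Zru$ along the orbit of $m\mapsto(2-m^{-1})^{-1}$, and Lemma \ref{Lemma:5dim} uses invertibility throughout; for composite $r$ this chain need not connect the relevant weights, and your proposed substitute --- a consistency lemma matching loop insertions at level $2$ with compensations at level $3$ along the paths $\alpha_{t_1,t_2,t_3}$ --- is precisely the combinatorial crux, asserted rather than proven. (c) For $(\mathrm{IV})\Rightarrow(\mathrm{VI})$, the uniform value $\frac{r+1}{2}$ in the ``otherwise'' branch of $\gamma$, on which Propositions \ref{deltadetails} and \ref{deltadetailsplus} depend, requires $\alpha$ and $\alpha-1$ to be units; for composite $r$ the $\overline{\Delta}$-obstruction splinters into cases indexed by $\gcd(\alpha,r)$ and $\gcd(\alpha-1,r)$ that have not been tabulated, and your plan to ``track $\gcd$-data'' names the task without performing it. (d) For $(\mathrm{IV})\Rightarrow(\mathrm{VII})$, you correctly observe that the $d=3$ device of realizing arbitrary shifts $\ell_c$ by automorphisms already fails at $d=5$, so Remark \ref{zerocols} has no known extension beyond the cases the paper settles --- indeed the paper's $d=7$ example with (II) holding but (VII) failing shows this implication is delicate even in principle. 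Since the authors' evidence at composite $r$ is purely computational, your proposal should be read as a credible plan consistent with the paper's own stated expectations, and to its credit it flags its gaps honestly; but none of the four pillars is closed, so the conjecture remains exactly that.
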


In fact, we find it likely that the remaining cases could be proved along the lines of the work already presented. But there are several outstanding technical issues which we are somewhat daunted by.

Beyond $d=7$, several new phenomena occur:

\begin{example}
Consider the case  $r=5$. With 
\[
\mm=(1,2,3,1)\qquad \nn=(1,3,2,1)
\]
we have that (VII) holds but (VI) fails. With 
\[
\mm=(1,2,3,4)\qquad \nn=(1,2,4,3)
\]
we have that (VII) holds but (V) fails. With
\[
\mm=(1,3,4,1,2,3)\qquad \nn=(1,4,3,1,2,4)
\]
we have that (II) holds but (VII) fails.
\end{example}

The two first examples were found by the brute force search described above. The first example indicates that it is indeed possible to have $(\QLS[7],\gamma)\simeq (\QLSalt[7],\gamma)$ nontrivially, even though not all four entries of the $\mm$ and $\nn$  are mutually different (but we do not know that (II)$\Longrightarrow$ (I) in order to prove so). The second example is less of a surprise, because  we know from \cite[Theorems 7.8 and 7.9]{segrerapws:gcgcfg} it is possible for the adjacency matrices defining the graphs used in \cite{jhhws:qlsga} to differ even though their graph $C^*$-algebras (i.e., the quantum lens spaces) are isomorphic. For the concrete pair found by our program, the number of admissible paths from the top to the bottom is 40 and 45, respectively, so  necessarily the $\newSS{-}$ differ because there is not the same number of elements in the corresponding multiset.

The last example was found by noting a lack of symmetry amongst the pairs of tuples at $d=11$ that agree in the sense of (VII), and testing that the concrete choice shown here in fact has a solution implemented by $H$ with $\ell_c=0$ for all $c$ and satisfying \eqref{H1eq}, but cannot be found with all first columns of all $Y_{ab}$ vanishing. This establishes that the shortcut we were using is not generally feasible at $d\geq 11$, but we tend to believe that it works at lower dimensions. 

Combining  our experiments with a bit of qualified guesswork, we believe that it is possible to formulate a criterion describing exactly, at any dimension, when $(\QLSr[5],\gamma)$ $\simeq (\QLSralt[5],\gamma)$ nontrivially (i.e.~with $\nn\not=\alpha \mm$), describing the  patterns allowing this phenomenon as a regular language (\cite{sck:rennfa},\cite{js:are}). For this, we note that at $d=7$, the only nontrivial pairs are chosen in a row inside Table \ref{tab:nontrivialsol}, up to trivial isomorphism. We denote the four different types by 0,1,2, and 3. 
\begin {table}[H]
\caption{Nontrivial solutions} \label{tab:nontrivialsol} 
\begin{center}
\begin{tabular}{|c|r|}
\hline
\texttt Type & \\\hline
\texttt 0&$(1,2,3,4),(1,2,4,3),(1,3,2,4),(1, 3,4,2),(1,4,2,3),(1,4,3,2)$\\\hline
\texttt 1&$(1,2,3,1),(1,3,2,1)$\\\hline
\texttt 2&$(1,3,4,1),(1,4,3,1)$\\\hline
\texttt 3&$(1,2,4,1),(1,4,2,1)$\\\hline
\end{tabular}
\end{center}
\end{table}
At higher dimensions, we observe that any nontrivial solution at $d=5+2\ell$ can be described by a string of $\ell$ numbers, corresponding to the types above, defining how these patterns are combined. For instance, when the program finds that condition  (VII) is satisfied at 
\[
\mm=(1,4,2,3,4,1,3)\qquad \nn=(1,4,3,2,4,1,2)
\]
we transform by a trivial isomorphism, four entries at a time, to elements found in the table as follows:
\begin{center}
\begin{tabular}{ccccccc|c|ccccccc}
1&4&2&3&4&1&3& \text{Type} &1&4&3&2&4&1&2\\
\hline
1&4&2&3&&&&\texttt 0&1&4&3&2&&&\\
&1&3&2&1&&&\texttt 1&&1&2&3&1&&\\
&&1&4&2&3&&\texttt 0&&&1&4&3&2&\\
&&&1&3&2&1&\texttt 1&&&&1&2&3&1\\
\end{tabular}
\end{center}
The table should be read as follows: first we consider the first four entries $1 4 2 3$ which is of type {\texttt 0} in Table \ref{tab:nontrivialsol}. Then we consider the next four entries $4 2 3 4$, to get it into a form as in Table \ref{tab:nontrivialsol} (i.e. the first entry must be 1) we multiply through with $4$ (the inverse of $4$). Hence we obtain $1 3 2 1$ which is of Type {\texttt 1}. 
  
We then say that the nontrivial solution is given by the pattern \texttt{0101}. It is easy to see that the pattern \texttt{0$\cdots$0} corresponds to 24 choices of parameters, and that any other pattern has $0$ or $8$ choices. Some patterns such as \texttt{11} are not realizable by parameters, whereas others such as \texttt{202} are, but seem not to generate equivariantly isomorphic quantum lens spaces. We venture the following guess.

\newcommand{\ts}[1]{\text{\tt #1}}
\begin{conjecture}
Any nontrivial isomorphism $(\QLSr[5],\gamma)\simeq (\QLSralt[5],\gamma)$ at $d\geq 7$ has a pattern in the regular language described by
\[
\ts{0}^*\vee \ts{0}^*\ts{1}\vee \ts{0}^*(\ts{10}(\ts{00})^*)^*\ts{0}^*\vee \ts{0}^*\ts{2}\ts{0}^*\vee  \ts{0}^*\ts{3}\ts{0}^*
\]
and any such pattern is realized by nontrivial isomorphisms with 24 choices when the word   is of the form $\ts{0}\ts{0}^*$ and 8 at all other words in the language.
\end{conjecture}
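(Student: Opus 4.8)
The plan is to reduce the statement to a purely combinatorial analysis of the matrices $H$ implementing $\DQ(r;\mm)\simeq\DQ(r;\nn)$, and then to read off both the admissible patterns and their multiplicities from the local structure of the weight tuples. Throughout I write $d=5+2\ell$, so that $\mm,\nn$ have $3+\ell$ entries and there are exactly $\ell$ overlapping windows of four consecutive weights. As in the proof of Proposition~\ref{Prop:DimQ} I would normalize by Lemma~\ref{Lemma:simpleiso} and work with the lower-triangular block form of $H$ from Proposition~\ref{charH}, whose diagonal blocks are $\sh^{\ell_c}\perm_{m_c/n_c}$ and whose subdiagonal blocks $Y_{ab}$ are governed, at distance one, by the invariants $\overline{\Delta}(Y_{a,a+1})$ computed in Propositions~\ref{deltadetails} and~\ref{deltadetailsplus}. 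The statement has two halves: \emph{necessity}, that the pattern of any nontrivial isomorphism lies in the given regular language, and \emph{sufficiency with multiplicity}, that every word in the language is realized by $24$ tuples $\mm$ when the word is of the form $\ts{0}\ts{0}^*$ and by $8$ otherwise.

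First I would establish the local window classification. The point is that the distance-one invariant $\overline{\Delta}(Y_{a,a+1})$ depends (up to $\sim$) only on the four weights $m_a,m_{a+1},n_a,n_{a+1}$, so that the constraints coupling two consecutive blocks via the master equation~\eqref{masterfive}---which at distance two involves only $Y_{a,a+1},Y_{a+1,a+2}$ and the diagonal data---are exactly the constraints of the $d=7$ problem on the four weights $m_a,\dots,m_{a+3}$. Using the $d=7$ classification recorded in Table~\ref{tab:nontrivialsol}, I would show that, after multiplying the window through by the unit making its first entry $1$, each window must realize one of the nontrivial types $\ts{0},\ts{1},\ts{2},\ts{3}$ (or be trivial). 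This assigns to every nontrivial isomorphism a well-defined word of length $\ell$ over $\{\ts{0},\ts{1},\ts{2},\ts{3}\}$, and reduces necessity to a transition analysis.

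Next I would determine the admissible words. Since consecutive windows share three entries, the type of window $i+1$ is constrained by that of window $i$: the two local solutions must agree on the overlap, and their pairings to the respective $\nn$-windows must be globally consistent. I would encode this as a finite automaton whose states record just enough of the recent $m$- and $n$-entries to decide admissibility, computing the transitions from the equalities among weights that define each type in Proposition~\ref{deltadetailsplus}. The expected outcome is that type $\ts{1}$ can only follow (and precede) runs of $\ts{0}$ in the paired fashion encoded by $\ts{10}(\ts{00})^*$, while types $\ts{2}$ and $\ts{3}$ are isolated amid zeros; the accepted language should then coincide with
\[
\ts{0}^*\vee \ts{0}^*\ts{1}\vee \ts{0}^*(\ts{10}(\ts{00})^*)^*\ts{0}^*\vee \ts{0}^*\ts{2}\ts{0}^*\vee  \ts{0}^*\ts{3}\ts{0}^*.
\]
For sufficiency and the multiplicity count I would construct, for each word, explicit $\mm,\nn$ together with an $H$ of the kind in Remark~\ref{zerocols} (all $\ell_c=0$ and vanishing first columns), verifying the intertwining $H\myA[r;\mm]=\myA[r;\nn]H$ via Lemma~\ref{SYYSZ}. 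The count then becomes bookkeeping: for an all-zero word every window consists of four distinct units, so each new entry is forced to be the unique missing unit and the tuple is determined by its first window, giving $4!=24$ tuples; a word containing a nonzero type pins down the local structure so tightly that only a single binary structural choice survives beyond the global unit scaling, giving $2\cdot 4=8$.

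The main obstacle is twofold, and is exactly what keeps this a conjecture. On the combinatorial side, the clean reduction to local windows presumes that one may always take $\ell_c=0$ and vanishing first columns, i.e.\ that the strengthened condition of Remark~\ref{zerocols} holds at every dimension; but the example $\mm=(1,3,4,1,2,3)$, $\nn=(1,4,3,1,2,4)$ shows that at $d\geq 11$ a genuine $H$ need not have vanishing first columns, so the nonlinear freedom in the $\ell_c$ together with the higher-distance blocks $Y_{ab}$ (with $b-a\geq 2$) may impose or relieve constraints that are not captured window-by-window. Controlling these longer-range master equations uniformly in $\ell$---and ruling out admissible-looking patterns such as $\ts{2}\ts{0}\ts{2}$ that fail to be realized---is the crux. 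On the operator-algebraic side, passing from the $K$-theoretic statement $\DQp(r;\mm)\simeq\DQp(r;\nn)$ back to an actual equivariant isomorphism of quantum lens spaces requires the completeness of the dimension-quadruple invariant (cf.\ Theorem~\ref{mainresult5} and the conjectures of~\cite{rh:ggclpa,seer:rmsigc}), which is itself open beyond the cases already settled.
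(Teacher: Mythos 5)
This statement is a \emph{conjecture} in the paper, not a theorem: the authors offer no proof, only systematic computer experiments (comparing conditions (V)--(VII) at $r=5$ for $d\leq 13$) together with the $d=7$ classification recorded in Table \ref{tab:nontrivialsol} and the window/pattern bookkeeping illustrated around it. So there is no paper proof to compare your proposal against, and your proposal does not prove the statement either --- it is a research program, and to your credit you flag its failure points yourself; they coincide with the obstructions the authors themselves record.

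Concretely, three gaps keep your outline from being a proof. First, the reduction to overlapping four-entry windows implicitly relies on normalizing $H$ as in Remark \ref{zerocols} (all $\ell_c=0$ and vanishing first columns of the $Y_{ab}$); the paper's example $\mm=(1,3,4,1,2,3)$, $\nn=(1,4,3,1,2,4)$ shows this normalization is unavailable in general already at $d=11$, so windowed distance-one constraints do not faithfully capture condition (IV), and the long-range blocks $Y_{ab}$ with $b-a\geq 2$ enter through further $\Delta$-compatibility conditions that your plan never analyzes --- the nonlinearity in the $\ell_c$ is precisely what the paper says makes even the algorithmic comparison costly. Second, the proposed finite automaton is never constructed: you assert the expected transition behavior of types $\ts{1},\ts{2},\ts{3}$, but the paper warns that parameter-realizability of a pattern is not sufficient (\texttt{202} is realizable by parameters yet appears not to yield equivariantly isomorphic quantum lens spaces), so an automaton built from weight equalities would at best bound the language from outside, and the multiplicity counts remain heuristic: the $4!=24$ count for all-zero words is a correct easy observation since $\varphi(5)=4$ forces each new entry of an all-distinct-window tuple, but the ``$2\cdot 4=8$'' count is asserted without any derivation of the claimed single surviving binary choice. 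Third, even if all the $K$-theoretic combinatorics were settled, passing from $\DQ$- or $\DQp$-isomorphism back to a genuine equivariant isomorphism --- which is what the conjecture asserts --- requires the Hazrat-type completeness conjecture, open beyond the cases $d=3$ and $d=5$ with $r$ prime; the paper does not even know (II)$\Longrightarrow$(I) at $d=7$. In short: your strategy is a sensible formalization of the authors' own heuristics and correctly identifies the crux issues, but it is a plan rather than a proof, which is exactly why the statement is stated as a conjecture.
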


The notation used above uses the \emph{Kleene star} ``$*$'' to signify that a word can be repeated any finite number of times (including zero times), an implicit symbol for concatenation, and ``$\vee$'' for alternative. As is customary in theoretical computer science, the  operations take priority of evaluation in the order  stated.

A similar behavior can be observed at $r=8$. At $d=7$ we have nontrivial isomorphisms
\begin{center}
\begin{tabular}{|c|r|}\hline
\text{Type} & \\
\hline
&$(1,3,5,7),(1,7,5,3)$\\\cline{2-2}
\texttt 0&$(1,3,7,5),(1,7,3,5)$\\\cline{2-2}
&$(1,5,3,7),(1,5,7,3)$\\\hline
\texttt 1&$(1,3,7,1),(1,7,3,1)$\\\hline
\end{tabular}
\end{center}
where the pattern \texttt{0$\cdots$0} corresponds to three different groups of 8 choices of parameters, and any other pattern has $0$ or $8$ choices. 

\begin{conjecture}
Any nontrivial isomorphism $(\QLSr[8],\gamma)\simeq (\QLSralt[8],\gamma)$ at $d\geq 7$  has a pattern in the regular language described by
\[
\ts{0}^*\vee \ts{0}^*\ts{1}\vee \ts{0}^*(\ts{10}(\ts{00})^*)^*\ts{0}^*,
\]
and any such pattern is realized by three groups of nontrivial isomorphisms with 8 choices when the word  is of the form $\ts{0}\ts{0}^*$,  and one group of 8 at all other words in the language.
\end{conjecture}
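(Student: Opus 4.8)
The plan is to adapt the dimension-quadruple machinery developed for $d=5$ to arbitrary dimension with $r=8$ fixed, treating the classification as a problem about how local isomorphisms on overlapping four-tuples of weights patch together. Throughout I would use Lemma \ref{Lemma:simpleiso} to normalise, sliding a window of four consecutive weights $(m_i,m_{i+1},m_{i+2},m_{i+3})$ along $\mm$ and rescaling each window by a unit so that its first entry is $1$; its \emph{type} is then its isomorphism type as a $d=7$ datum. For $r=8$ I would first establish that this type is exactly one of the two entries in the $r=8$ table, namely type \ts{0} (four distinct units, realising the three difference-$\gcd$ profiles $(2,2,2)$, $(2,4,2)$, $(4,2,4)$) or type \ts{1} (the wrap coincidence $m_i=m_{i+3}$, forcing the profile $(2,4,2)$). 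This base case is the $d=7$ instance, established by the $\Delta$-/$\sim$-analysis of Section~6, namely Propositions \ref{deltadetails} and \ref{deltadetailsplus} together with Lemma \ref{siminv}, carried out for the composite modulus $r=8$ rather than a prime. The three type-\ts{0} groups then fall out of the three admissible $\gcd$-profiles permitted by Theorem \ref{Thm:IdealLattice}.

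For the necessity direction I would reduce, via Proposition \ref{charH}, any isomorphism $\DQ(8;\mm)\simeq\DQ(8;\nn)$ to a lower-triangular intertwiner $H$ with diagonal blocks $\sh^{\ell_c}\perm_{m_c/n_c}$, extracting the off-diagonal blocks with Lemma \ref{describeH} and Lemma \ref{SYYSZ}. The key structural equations are the sub-subdiagonal ones generalising \eqref{masterfive}: the block at position $(i+2,i)$ forces the $\Delta$ of a combination of $Y_{i+1,i}$, $Y_{i+2,i+1}$ and permutation terms to vanish, hence by Lemma \ref{SYYSZ}(ii) the relation $\overline{\Delta}(Y_{i+1,i})\sim\overline{\Delta}(Y_{i+2,i+1})$. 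Reading these relations as constraints on the types of consecutive windows produces a finite transition relation on the alphabet $\{\ts{0},\ts{1}\}$; the central combinatorial claim is that this relation forbids the concatenation \ts{11}, requires an \emph{odd} number of \ts{0}'s between any two interior \ts{1}'s, and is otherwise free on leading and trailing \ts{0}'s and on a single trailing \ts{1}. The odd-gap condition is where the parity hidden in the $\gamma$-values of Proposition \ref{deltadetailsplus} enters, and verifying that the resulting automaton accepts precisely $\ts{0}^*\vee\ts{0}^*\ts{1}\vee\ts{0}^*(\ts{10}(\ts{00})^*)^*\ts{0}^*$ is a routine, if delicate, automaton computation.

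For sufficiency I would run the argument in reverse: given a word in the language, concatenate the corresponding local window isomorphisms into a global $H$ with all $\ell_c=0$ and all first columns of the $Y_{ab}$ vanishing (condition (VII) of Section~7), verify the full intertwining block by block exactly as in Remark \ref{zerocols}, and then invoke Theorem \ref{Thm:DQ0} to promote the isomorphism of dimension quadruples to an equivariant isomorphism of the quantum lens spaces. The counting statement then reduces to bookkeeping: a pattern $\ts{0}\ts{0}^*$ only constrains the windows to be of type \ts{0}, leaving the three profiles free, each contributing a group of $|\Zru|\cdot 2=8$ concrete parameter choices, whereas any word containing a \ts{1} constrains the profile and leaves a single group of $8$.

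The main obstacle is exactly the one flagged at the end of Section~7: controlling the diagonal shift exponents $\ell_c$ and the freedom $y_i$ in the first columns. For $d=5$ and prime $r$ one absorbs arbitrary $\ell_c$ into automorphisms and assumes $\ell_c=0$, but this already fails at $d=5$ for composite moduli, and the $d=11$ example shows that a nontrivial isomorphism need not admit a normal form with vanishing first columns. Thus the genuinely hard step is to prove, for $r=8$ and every $d$, that the necessity argument can be run without presupposing (VII) — that no choice of nonzero $\ell_c$ smuggles in a pattern outside the language — since the nonlinear dependence of the intertwining condition on the $\ell_c$ obstructs the clean linear-algebra argument available for primes. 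A secondary difficulty is that $r=8$ is composite, so the dichotomy $\gcd\in\{1,r\}$ used throughout Section~5 is unavailable and every profile must be tracked explicitly; I expect this to complicate, but not fundamentally alter, the local base case.
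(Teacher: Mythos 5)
You should first note that the statement you are proving is one of the paper's \emph{conjectures}: the authors offer no proof, only Maple experiments for $r=8$, $d\leq 11$, and pattern-guessing, so there is no argument of theirs to match yours against. Judged as a proof on its own terms, your proposal has one step that fails outright: the sufficiency direction. You invoke Theorem \ref{Thm:DQ0} ``to promote the isomorphism of dimension quadruples to an equivariant isomorphism,'' but that theorem only goes the other way --- from an equivariant isomorphism of corners to an isomorphism of the quadruples $\DQp$ or $\DQ$. The converse is precisely Hazrat-type completeness, i.e.\ the paper's final conjecture that conditions (I)--(IV) are equivalent, and the authors state explicitly (in discussing their first $d=7$ example) that they do not know (II)$\Longrightarrow$(I) even for a single nontrivial pair at $r=5$. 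The realization half of the conjecture (``any such pattern is realized by \dots nontrivial isomorphisms'') therefore cannot be obtained by exhibiting an intertwiner $H$; in dimension $5$ the paper had to build genuine equivariant isomorphisms by hand on generators (Propositions \ref{Prop:IsomorphismDim5} and \ref{prop:explicit_iso}, resting on Lemma \ref{Lemma:5dim} and the generator description of Proposition \ref{PropGenerators5}), and no analogue of that construction is available, or proposed by you, at $d\geq 7$.

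The necessity direction has a second, structural gap that you flag but do not resolve, and flagging it is not the same as closing it. Your sliding-window transition relation is extracted from block equations of the form \eqref{masterfive} under the normal form $\ell_c=0$ with vanishing first columns --- condition (VII) --- but the paper's own example $\mm=(1,3,4,1,2,3)$, $\nn=(1,4,3,1,2,4)$ at $r=5$, $d=11$ shows that (II) can hold while (VII) fails, so an automaton computed from (VII)-normalized data does not a priori see all isomorphisms of dimension quadruples, let alone all equivariant isomorphisms. Moreover the tools you cite for the local analysis are proved only for $r$ prime: Lemma \ref{describeH} and the $\gamma$-table of Proposition \ref{deltadetailsplus} use invertibility of differences modulo a prime throughout, whereas at $r=8$ the blocks $Y_{ab}$ carry the support constraints from $\PP(r;\mm)$ in Proposition \ref{charH} ($Y_{ab}(i,j)=0$ when $(b,j)\not\myto(a,i)$), and the three $\gcd$-profiles $(2,2,2)$, $(2,4,2)$, $(4,2,4)$ must be tracked through a $\Delta$-analysis that has not been carried out for composite moduli. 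Your claimed ``odd-gap'' parity mechanism behind the factor $(\texttt{10}(\texttt{00})^*)^*$ is a plausible guess consistent with the language, but nothing in the proposal derives it. In short: the architecture (windows, types, transition automaton, counting by $|\Zru|$) is a reasonable reading of how the authors arrived at the conjecture, but both pillars --- realizing patterns equivariantly, and excluding non-(VII) intertwiners --- are exactly the open problems the paper names, so the proposal is a research plan rather than a proof.
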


Note that $\phi(5)=4=\phi(8)$ ($\phi$ denotes Euler's function); when $\phi(r)>4$  the observed systems are dramatically different, but we have not enough information to generate a conjecture from that.

Finally, we state the main conjecture.

\begin{conjecture}
Conditions (I)--(IV) are equivalent for all choices of $r$ and $d$.
\end{conjecture}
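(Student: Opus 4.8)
The plan is to first dispatch the elementary implications and then isolate the genuinely hard content as a single graded realization problem. By Theorem \ref{Thm:DQ0} we immediately obtain $(I)\Rightarrow(II)$ and $(III)\Rightarrow(IV)$, while $(I)\Rightarrow(III)$ is trivial upon tensoring with $\KKK$. Passing from the distinguished class $[p_0^E]$ to the order ideal $I([p_0^E])$ it generates shows $(II)\Rightarrow(IV)$, since order ideals are preserved by order isomorphisms; hence all of $(I),(II),(III)$ already imply $(IV)$. The entire difficulty therefore lies in proving that the dimension quadruple is \emph{complete}, which I would break into (a) stable completeness $(IV)\Rightarrow(III)$ and (b) recovery of the unit $(III)\Rightarrow(I)$. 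Granting these, the chain $(IV)\Rightarrow(III)\Rightarrow(I)\Rightarrow(II)\Rightarrow(IV)$ collapses all four conditions, so no separate treatment of $(II)$ is needed.

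The reduction I propose for (a) rests on the observation, implicit in Theorem \ref{isowF} and Crisp's theorem, that the equivariant datum $(\QLS,\gamma)$ is equivalent to the pair consisting of the AF algebra $C^*(\ourE\times_1\ZZ)$ together with the right-translation automorphism $\lt$ and the corner projection $p_0^E$, since the $\T$-action is recovered as the dual of the $\ZZ$-grading and $C^*(\ourE)\cong C^*(\ourE\times_1\ZZ)\rtimes_{\lt}\ZZ$. Under this dictionary, equivariant stable isomorphism becomes conjugacy of the shift automorphisms up to stable isomorphism of the underlying AF algebras, and the dimension triple $\DT(r;\mm)=(K_0,K_0^+,\lt_*)$ is precisely Krieger's dimension-group-with-shift invariant from symbolic dynamics. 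Thus $(IV)\Rightarrow(III)$ is exactly the assertion that this invariant is complete for the shifts arising from the matrices $\myA[r;\mm]$, an instance of Hazrat's graded classification conjecture. The crucial leverage is Proposition \ref{charH}, which reduces the abstract existence of a $K_0$-isomorphism to the explicit, finitely checkable algebra of lower-triangular block matrices $H$ built from $\sh$ and $\perm_\alpha$; so the task is not to \emph{detect} an intertwiner but to \emph{lift} a given one to an equivariant $*$-isomorphism.

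The main obstacle is exactly this lifting step, and here the symbolic-dynamical analogy becomes a warning rather than a tool. An isomorphism of dimension triples is the operator-algebraic counterpart of \emph{shift equivalence}, whereas an actual equivariant $*$-isomorphism corresponds to \emph{strong shift equivalence}; by the Kim--Roush counterexamples to Williams' conjecture these notions differ in general, so no purely formal lifting theorem can exist and one must exploit the special structure of $\myA[r;\mm]$. Compounding this is the difficulty flagged in Section 7: the diagonal shift exponents $\ell_c$ in $H$ enter non-linearly, and it is already false at $d=5$ that every choice of $\ell_c$ is realized by an automorphism, so one cannot in general normalize to $\ell_c=0$. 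My proposed route around both issues is to use the rigidity of the block-triangular form, where the diagonal blocks are forced to be $\sh^{\ell_c}\perm_{m_c/n_c}$ and the relations of the type \eqref{masterfive} among the $\Delta$-vectors propagate level by level. I would attempt to show that for these particular dimension groups shift equivalence implies strong shift equivalence, equivalently that a valid $H$ can always be conjugated into one with $\ell_c=0$ and vanishing first columns (condition (VII)), by an induction on the levels $1,\dots,k+1$ mirroring the stratification of the Hasse diagram from Lemma \ref{simplehasse}, using the $d=3$ analysis of $Y_{21}$ and $\Delta(Y_{21})$ as the base case at each adjacent pair of levels.

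For step (b), the passage from the stable statement $(III)$ to the unital $(I)$ should be routine once the lifting is controlled, exactly as in Theorem \ref{mainresult5}: the realized isomorphism can be arranged to respect $H\sum_{i}\ee_{(i,0)}=\sum_{i}\ee_{(i,0)}$ of equation \eqref{H1eq}, so that it descends to an honest equivariant isomorphism of the full corners and not merely of their stabilizations. I expect the strong-shift-equivalence realization in (a) to be the decisive obstacle, and I regard it as the main open point: if it can be resolved for the family $\myA[r;\mm]$, the unit bookkeeping in (b) is standard and the four conditions collapse as in the cases $d\le 3$ and $d=5$ with $r$ prime that are already settled.
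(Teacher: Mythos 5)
The statement you are addressing is posed in the paper as a conjecture: the paper contains no proof of it, only the elementary implications and computational evidence, so there is no argument of the paper's for your attempt to match. Your text is likewise not a proof but a programme. The implications you actually establish --- (I)$\Rightarrow$(II) and (III)$\Rightarrow$(IV) via Theorem \ref{Thm:DQ0}, (I)$\Rightarrow$(III) by stabilizing, and (II)$\Rightarrow$(IV) by passing from $[p_0^E]$ to the order ideal $I([p_0^E])$ --- are exactly the easy directions already implicit in Section 7 of the paper, and the two hard directions, (IV)$\Rightarrow$(III) and (III)$\Rightarrow$(I), are left open in your write-up as well (you say so yourself). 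So the gap sits precisely where the conjecture lives: identifying (IV)$\Rightarrow$(III) as an instance of Hazrat's graded classification conjecture and of the shift-equivalence versus strong-shift-equivalence problem is a correct and useful framing, consistent with the paper's citations of \cite{rh:ggclpa} and \cite{seer:rmsigc}, but it is a restatement of the difficulty, not progress on it.

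Moreover, the concrete strategy you propose for the lifting step conflicts with the paper's own evidence. Conjugating every intertwiner $H$ into the form with all $\ell_c=0$ and all first columns of the $Y_{ab}$ vanishing is exactly the paper's condition (VII), and the example $\mm=(1,3,4,1,2,3)$, $\nn=(1,4,3,1,2,4)$ at $r=5$, $d=11$ shows that (II) can hold while (VII) fails; the paper states explicitly that this shortcut ``is not generally feasible at $d\geq 11$.'' Hence your proposed level-by-level induction, with the $d=3$ analysis of $\Delta(Y_{21})$ as base case, cannot terminate in (VII)-form in general --- at best one could aim for $\ell_c=0$ together with \eqref{H1eq}, whose general realizability is open precisely because of the nonlinearity of the $\ell_c$ and the failure, already at $d=5$, of all choices of $\ell_c$ to be realized by automorphisms. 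Your step (b) is also overclaimed: the paper's final remark notes that stabilized equivariant isomorphism does \emph{not} imply exact equivariant isomorphism even for the meteor graphs of \cite{lgcegdgrh:wcmg}, so (III)$\Rightarrow$(I) cannot be ``routine bookkeeping'' and would require a new rigidity argument specific to the matrices $\myA[r;\mm]$. (A minor inaccuracy: $C^*(\ourE)\cong C^*(\ourE\times_1\ZZ)\rtimes_{\lt}\ZZ$ holds only up to stabilization, by Takai duality, though this does not affect your stable formulation.)
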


There is some evidence, notably \cite{lgcegdgrh:wcmg}, that Hazrat's conjectures hold for graphs of the kind we are considering, but at the level of generality needed here nothing is known with certainly. It is known (cf.\ \cite[Theorem 14.8]{segrerapws:ccuggs}) that stable isomorphism of quantum lens spaces implies exact isomorphism non-equivariantly, but this is only a necessary condition for (III)$\Longrightarrow$(I) to hold. It is not true that stabilized equivariant isomorphism implies exact equivariant isomorphism for such graphs, indeed easy examples can be found for the meteor graphs considered in  \cite{lgcegdgrh:wcmg}, so this would have to be a consequence of the rigid structure of quantum lens spaces.

\bibliographystyle{alpha}
\bibliography{/Users/vhq645/Reference/cstar,/Users/vhq645/Reference/own,/Users/vhq645/Reference/sds}

\end{document}